\definecolor{bgcolor}{rgb}{0.8,1,1}
\definecolor{bgcolor2}{rgb}{0.8,1,0.8}
\newtheorem{assumption}{Assumption}
\newtheorem{lemma}{Lemma}
\newtheorem{theorem}{Theorem}
\newtheorem{corollary}{Corollary}
\renewcommand{\thesubdefinition}{\the\numexpr\thedefinition+1\relax\alph{subdefinition}}
\newcommand{\dotprod}[2]{\left\langle #1,#2 \right\rangle}
\newcommand{\norms}[1]{\left\| #1 \right\|}
\newcommand{\normsB}[1]{\Big\| #1 \Big\|}
\newcommand{\expect}[1]{\mathbb{E}\left[ #1 \right]}
\newcommand{\expectB}[1]{\mathbb{E}\Big[ #1 \Big]}
\providecommand{\norm}[1]{\left\lVert#1\right\rVert}
  \providecommand{\R}{\mathbb{R}} %
  \DeclareMathOperator{\E}{{\mathbb E}}
  \providecommand{\cO}{\mathcal{O}}
\newcommand{\circledOne}{\text{\ding{172}}}
\newcommand{\circledTwo}{\text{\ding{173}}}
\newcommand{\circledThree}{\text{\ding{174}}}
\newcommand{\circledFour}{\text{\ding{175}}}  
\newcommand{\circledFive}{\text{\ding{176}}} 
\newcommand{\circledSix}{\text{\ding{177}}}
\def\<#1,#2>{\langle #1,#2\rangle}
\definecolor{mydarkgreen}{RGB}{39,130,67}
\definecolor{mydarkred}{RGB}{192,47,25}
\newcommand{\green}{\color{mydarkgreen}}
\newcommand{\red}{\color{mydarkred}}
\newcommand{\greencheck}{{\green\ding{51}}}
\newcommand{\redx}{{\red\ding{55}}}
\title{New Aspects of Black Box Conditional Gradient: Variance Reduction and One Point Feedback 
}
\author{
    Andrey Veprikov \\ MIPT, IISP, Moscow \\ \texttt{veprikov.as@phystech.su} \And 
    Aleksandr Bogdanov \\ MIPT, Moscow \\
    \texttt{bogdanov.ai@phystech.su} \And 
    Vladislav Minashkin\\ MIPT, Moscow \\
    \texttt{minashkin.vm@phystech.su} \And
    Aleksandr Beznosikov \\ MIPT, IISP, Moscow \\
    \texttt{beznosikov.an@phystech.su}
}
\date{}
\begin{document}
\maketitle

\begin{abstract}

    This paper deals with the black-box optimization problem. In this setup, we do not have access to the gradient of the objective function, therefore, we need to estimate it somehow.
    We propose a new type of approximation \texttt{JAGUAR}, that memorizes information from previous iterations and requires $\mathcal{O}(1)$ oracle calls. 
    We implement this approximation in the Frank-Wolfe and Gradient Descent algorithms and prove the convergence of these methods with different types of zero-order oracle. Our theoretical analysis covers scenarios of non-convex, convex and PL-condition cases. Also in this paper, we consider the stochastic minimization problem on the set $Q$ with noise in the zero-order oracle; this setup is quite unpopular in the literature, but we prove that the \texttt{JAGUAR} approximation is robust not only in deterministic minimization problems, but also in the stochastic case. We perform experiments to compare our gradient estimator with those already known in the literature and confirm the dominance of our methods.

\end{abstract}

\providecommand{\keywords}[1]
{
  \small	
  \textbf{\textit{Keywords: }} #1.
}
\keywords{Optimization, zero-order optimization, Frank-Wolfe methods}

\addtocontents{toc}{\protect\setcounter{tocdepth}{0}}

\section{Introduction}
    The projection-free methods, such as Conditional Gradient, known as the Frank-Wolfe (FW) \cite{frank1956algorithm} algorithm, are widely used to solve various types of optimization problems. In the last decade, Conditional Gradient methods have attracted increasing interest in the machine learning community, since in many cases it is computationally cheaper to solve the linear minimization problem over the feasible convex set (e.g. $l_p$-balls or simplex $\Delta_d$), than to do a  projection into it \cite{leblanc1985improved, jaggi2011sparse, bubeck2015convex, hazan2016introduction, goldfarb2017linear, dadras2022federated, freund2017extended}.

    In the original Frank-Wolfe paper \cite{frank1956algorithm}, the authors used true gradient in their algorithm, but modern machine learning and artificial intelligence problems require the use of other  gradient estimators due to the significant increase in dataset size and complexity of state-of-the-art models. Examples of such gradient estimators in FW-type algorithms include coordinate methods \cite{lacoste2013block, wang2016parallel, osokin2016minding} and stochastic gradient approximation with batches \cite{reddi2016stochastic, zhang2020one, lu2021generalized}.

    But sometimes there are even more complicated setups where we can not compute the gradient in general because it is not available for various reasons, e.g. the target function is not differentiable or the computation of the gradient is computationally difficult \cite{taskar2005learning, chen2017zoo, nesterov2017random}. This setting is called black-box optimization \cite{lian2015asynchronous}, and in this case we are forced to use zero-order (ZO) gradient estimation methods via finite differences of the objective function (sometimes with additional noise) to approximate the gradient \cite{doi:10.1137/100802001, duchi2012randomized}. This setting often arises in machine learning tasks, such as attacking on deep neural networks \cite{chen2017zoo, tu2019autozoom, bai2023query} or reinforcement learning \cite{lei2022zeroth, nakashima2024ancestral}.

    Over the last few years of research on the topic of black-box optimization, we can outline two main methods for approximating the gradient using finite differences. The first estimates the gradient in $m$ coordinates \cite{richtarik2014iteration, wright2015coordinate, doi:10.1137/16M1060182}:
    \begin{equation} 
    \label{eq:turtle_approx}
        \frac{d}{m} \sum\limits_{i \in I} \frac{f(x + \tau e_i) - f(x - \tau e_i)}{2 \tau} e_i .,
    \end{equation}
    where $I \subset \overline{1, d} : |I| = m$, $e_i$ is a standard basis vector in $\mathbb{R}^d$ and $\tau$ is a smoothing parameter.

    This finite difference approximates the gradient in $m$ coordinates and requires  $\mathcal{O}(m)$ oracle calls. If $m$ is small then this estimation would be inaccurate, if $m$ is large then we need to make many zero-order oracle calls at each iteration. In the case of $m = d$ we call this method \textit{full-approximation}.

    Another finite difference does not use the standard basis, but random vectors $e$ \cite{duchi2012randomized, nesterov2017random, gasnikov2022power, Randomized_gradient_free_methods_in_convex_optimization, statkevich2024gradient}:

    \begin{equation}
    \label{eq:l2_approx}
        d \frac{f(x + \tau e) - f(x - \tau e)}{2 \tau} e,
    \end{equation}
    where $e$ can be uniformly distributed on a $l_p$-sphere $RS^d_p(1)$, then this scheme is called $l_p$\textit{-smoothing}. In recent papers, authors usually use $p = 1$ \cite{gasnikov2016gradient-free, akhavan2022gradient} or $p=2$ \cite{nemirovskij1983problem, shamir2017optimal, doi:10.1137/19M1259225}. Alternatively, $e$ could be sampled from normal distribution with zero mean and unit covariance matrix \cite{nesterov2017random}.

    The approximations \eqref{eq:turtle_approx} and \eqref{eq:l2_approx} have a very large variance or require many calls to the zero-order oracle, therefore there is a necessity to somehow reduce the approximation error without increasing the number of oracle calls. In stochastic optimization, the method of remembering information from previous iterations is widely used, for example 
    in SVRG \cite{johnson2013accelerating},
    SAGA \cite{defazio2014saga},
    SARAH \cite{nguyen2017sarah}
    and SEGA \cite{hanzely2018sega} authors suggest memorizing the gradient from previous iterations for improve the convergence of the method.
    We decided to use this technique in a black-box optimization problem, memorizing the gradient approximations from previous iterations to reduce batch size without significant loss of accuracy.

    In this concept paper, we seek to answer the following research questions:

        $\bullet$ \textit{Can we create a zero-order method that uses information from previous iterations and approximates the true gradient as accurately as the full-approximation \eqref{eq:turtle_approx}, but requires $\mathcal{O}(1)$ calls to the zero-order oracle?}

        $\bullet$ \textit{Can we implement this approximation method in the Frank-Wolfe algorithms for deterministic and stochastic settings of the minimization problems?}

        $\bullet$ \textit{Whether the convergence estimates of this method are better than for the difference schemes \eqref{eq:turtle_approx} and \eqref{eq:l2_approx}?}

   In a more realistic setting, the zero-order oracle returns a noisy value of the objective function, i.e. it yields not $f(x)$ but $f(x) + \delta(x)$. Different types of noises $\delta(\cdot)$ are considered in the literature: it can be stochastic \cite{bach2016highly, doi:10.1137/19M1259225, akhavan2020exploiting, gasnikov2022power} or deterministic \cite{risteski2016algorithms, bogolubsky2016learning, lobanov2023non}. This raises another research question:

        $\bullet$ \textit{How do different types of noise affect the theoretical guarantees and practical results of our proposed approaches?}
        
    \subsection{Our contributions}

    According to the research questions, our contributions can be summarized as follows:

        $\bullet$ We present the \texttt{JAGUAR} method, which approximates a true  gradient of the objective function $\nabla f(x)$ at the point $x$. By using the memory of previous iterations we achieve an  accuracy close to the full-approximation \eqref{eq:turtle_approx}, but \texttt{JAGUAR} does not require $\mathcal{O}(d)$, but $\mathcal{O}(1)$ calls to the zero-order oracle. $l_p$-smoothing \eqref{eq:l2_approx} also requires $\mathcal{O}(1)$ oracle calls, but since there is no memory technique in it, this method has large variance and is not robust.

        $\bullet$ We prove theoretical estimates for this method (see Section \ref{subsection:JAGUAR_nonstoch}). We consider both deterministic and stochastic noises in the zero-order oracle. If the first setting is somehow obtained in the literature \cite{lobanov2023zero, lobanov2023non}, the second is rarely considered by authors,  therefore our method is suitable for various black-box optimization problems.

        $\bullet$ We implement the \texttt{JAGUAR} approximation in the Frank-Wolfe algorithm for stochastic and deterministic minimization problems and prove its convergence in both cases (see Sections \ref{sect:FW_via_JAGUAR} and \ref{sect:JAGUAR_stoch}).

        $\bullet$ We also implement \texttt{JAGUAR} in the Gradient Descent method and prove its convergence in the non-convex and PL-condition cases (see Section \ref{sect:GD}).

        $\bullet$ We perform several computational experiments, comparing the \texttt{JAGUAR} approximation with $l_2$-smoothing \eqref{eq:l2_approx} and full-approximation \eqref{eq:turtle_approx} on different minimization problems (see Section \ref{section:experiments}).

\subsection{Related work}

    In this section, we compare the problem statements and approximation methods in the literature on zero-order methods in the Frank-Wolfe-based algorithms. Some authors consider coordinate methods \cite{lacoste2013block} these are also gradient approximations, but these methods use the true gradient of the observed function $f$, therefore we can not use them directly in black-box optimization. In general, the $l_p$-smoothing technique can approximate the gradient using $\mathcal{O}(1)$ oracle calls \cite{dvinskikh2022noisy}, but it may not be robust in the Frank-Wolfe setting, since in \cite{lobanov2023zero} authors have to collect large batches of directions $e$ to achieve convergence. Note that in \cite{lobanov2023zero} the non-stochastic noise is taken into account. The full-approximation is also used in the literature \cite{sahu2019towards, gao2020can, akhtar2022zeroth}, but at each iteration we need to make $\mathcal{O}(d)$ oracle calls, and since $d$ is huge in modern applications, this can be a problem. Also, this method requires the smoothness of the objective function $f$. We summarize and compare the problem statements, approximation methods and results for them in Table \ref{tab:FW}.

    \renewcommand{\arraystretch}{2}
    \begin{table*}[!ht]
        \centering
        \caption{Adjustment of the various zero-order and coordinate FW methods.}
        \label{tab:FW}   
        \tiny
        \resizebox{\linewidth}{!}{
    \begin{threeparttable}
        \begin{tabular}{|c|c|c|c|c|c|c|}
        \hline  
         \multirow{2}{*}{Method} & \multicolumn{2}{c|}{Setting} & \multicolumn{2}{c|}{Noise} & \multirow{2}{*}{Batch size} & \multirow{2}{*}{Approximation} \\
        \cline{2-5}
         & Smooth & Zero-order & Stochastic & Deterministic & & \\
        \hline
        ZO-SCGS \cite{lobanov2023zero} & \redx & \greencheck & \redx  & \greencheck & $\mathcal{O}\left(1/\varepsilon^2\right)$ & $l_2$-smoothing \eqref{eq:l2_approx}
        \\ \hline
        FZFW \cite{gao2020can} & \greencheck & \greencheck & \redx & \redx & $\mathcal{O}\left(\sqrt{d}\right)$ & Full-approximation \eqref{eq:turtle_approx}
        \\ \hline 
        DZOFW \cite{sahu2019towards} & \greencheck & \greencheck & \redx & \redx & $\mathcal{O}\left(d\right)$ & Full-approximation \eqref{eq:turtle_approx}
        \\ \hline 
        MOST-FW \cite{akhtar2022zeroth} & \greencheck & \greencheck & \redx & \redx & $\mathcal{O}\left(d\right)$ & Full-approximation \eqref{eq:turtle_approx}
        \\ \hline 
        BCFW \cite{lacoste2013block} & \greencheck & \redx & \redx & \redx & $\mathcal{O}\left(1\right)$ & Coordinate
        \\ \hline 
        SSFW \cite{beznosikov2023sarah} & \greencheck & \redx & \redx & \redx & $\mathcal{O}\left(1\right)$ & Coordinate
        \\ \hline 
        \rowcolor{bgcolor2}\texttt{FW via JAGUAR} (this paper) & \greencheck & \greencheck & \greencheck & \greencheck & $\mathcal{O}\left(1\right)$ & \texttt{JAGUAR} (Algorithms \ref{alg:JAGUAR_nonstoch} and \ref{alg:FW_stoch})
        \\ \hline 
        \end{tabular}
    \end{threeparttable}
    }
    \end{table*}

\section{Main results}

\label{section:main_results}

In this paper, we consider the following optimization problem

        \begin{equation}
        \label{eq:problem_nonstoch}
            f^* := \min_{x \in Q} \quad f(x).
        \end{equation}
    We now provide several assumptions that are necessary for the analysis.

        \begin{assumption}[Compact domain]\label{ass:compact}
            The set $Q$ is compact and convex, i.e. $\exists D > 0 :~ \forall x, y \in Q \hookrightarrow \|x - y\| \leq D .$
        \end{assumption}

        \begin{assumption}[$L$-smoothness]\label{ass:smooth_nonstoch}
            The function $f(x)$ is $L$-smooth on the set $Q$, i.e. $\exists~ L > 0 : \forall x, y \in Q \hookrightarrow \|\nabla f(x) - \nabla f(y)\| \leq L \|x-y\|.$
        \end{assumption}
        \begin{assumption}[Convex]\label{ass:conv}
            The function $f(x)$ is convex on the set $Q$, i.e.  $ \forall x, y \in Q \hookrightarrow f(y) \geq f(x) + \langle\nabla f(x), y - x\rangle .$
        \end{assumption}

        We assume that we only have access to the zero-order oracle and that it returns the noisy value of the function $f(x)$:
        $f_{\delta}(x) := f(x) + \delta(x).$
        Therefore, we make a common assumption about this noise \cite{dvinskikh2022noisy, lobanov2023non}. For stochastic case see Section \ref{sect:JAGUAR_stoch}.

        \begin{assumption}[Bounded oracle noise]\label{ass:bounded_nonstoch}
            The noise in the zero-order oracle is bounded by a constant $\Delta > 0$, i.e. $
                \exists \Delta > 0 : ~\forall x \in Q \hookrightarrow |\delta(x)|^2 \leq \Delta^2 .$
        \end{assumption}


\subsection{\texttt{JAGUAR} gradient approximation. Deterministic case}
\label{subsection:JAGUAR_nonstoch}
        \begin{algorithm}[H]
    	\caption{\texttt{JAGUAR} gradient approximation. Deterministic case}
    	\label{alg:JAGUAR_nonstoch}
        \begin{algorithmic}[1]
            \State {\bf Input:} $x, h \in \mathbb{R}^d$ \label{line:0}
            \State Sample $i \in \overline{1, d}$ independently and uniform
            \State Compute $\widetilde{\nabla}_i f_{\delta}(x) = \frac{f_{\delta}(x + \tau e_i) - f_{\delta}(x - \tau e_i)}{2 \tau} e_i$
            \State $h = h - \dotprod{h}{e_i} e_i + \widetilde{\nabla}_i f_{\delta}(x)$ \label{line:h^k_nonstoch}
        \end{algorithmic}
        \end{algorithm}

        Above we reviewed the gradient approximation techniques using finite differences \eqref{eq:turtle_approx} and \eqref{eq:l2_approx}. In this section, we introduce the new gradient estimation technique \texttt{JAGUAR} (Algorithm \ref{alg:JAGUAR_nonstoch}), which is based on the already investigated methods and uses the memory of previous iterations. 
%
        
        The idea behind the \texttt{JAGUAR} method is similar to well-known variance reduction techniques such as SAGA \cite{defazio2014saga} or SVRG \cite{johnson2013accelerating}. 
        However, in zero-order optimization we need to approximate the gradient, therefore we need to apply the variance reduction technique to the coordinates \cite{hanzely2018sega}. Consequently, the \texttt{JAGUAR} method uses the memory of some coordinates of the previous gradients instead of memorizing gradients by batches in past points.
        
        There are already works in the literature that combine zero-order optimization and variance reduction, but the essence of these papers is that they change the gradient calculation to the gradient-free approximation \eqref{eq:turtle_approx} in the batch variance reduced  algorithms such as SVRG or SPIDER \cite{ji2019improved}, rather than using the variance reduction technique for coordinates as in Algorithm \ref{alg:JAGUAR_nonstoch}. 

        \texttt{JAGUAR} approximation algorithm can be used with any iterative scheme that return a new point $x^k$ at each step $k$. Using these points, we obtain the sequence $h^k$ in line \ref{line:h^k_nonstoch}, which in a sense serves as a memory of the gradient components from past moments. Therefore, it makes sense to use $h^k$ as an estimator of the true gradient $\nabla f(x^k)$ in incremental optimization methods. Using the following unified scheme, we can describe such an iterative algorithm that solves \eqref{eq:problem_nonstoch} (Algorithm \ref{alg:iter}).
        
        \begin{algorithm}[H]
    	\caption{Iterative algorithm using gradient estimator via \texttt{JAGUAR}}
    	\label{alg:iter}
        	\begin{algorithmic}[1]
                \State {\bf Input:} same as for \texttt{Proc} and $h^0$
        	    \For {$k = 0, 1, 2, ... , N$}
                    \State $h^{k+1}$ = \texttt{JAGUAR}($x^k$, $h^k$)
                    \State $x^{k+1}$ = \texttt{Proc}($x^k$, \texttt{grad\_est} = $h^{k+1}$)
                \EndFor
        	\end{algorithmic}
        \end{algorithm}

        In Algorithm \ref{alg:iter}, \texttt{Proc}($x^k$, \texttt{grad\_est}) is a sequence of actions that translates $x^k$ to $x^{k+1}$ using \texttt{grad\_estimator} as the true gradient. Now we start to analyze \texttt{JAGUAR} gradient approximation (Algorithm \ref{alg:JAGUAR_nonstoch}). Our goal is to estimate the closeness of the true gradient $\nabla f(x^k)$ and the output of the \texttt{JAGUAR} algorithm $h^k$ at step $k$.

        \begin{lemma}
        \label{lemma:h_vs_nablaf_nonstoch}
            For $x^k$ and $h^k$, generated by Algorithm \ref{alg:iter}, the following inequality holds

            \begin{equation}
            \label{eq:h_vs_nabla_nonstoch}
            \begin{split}
            H_{k+1}
                    &\leq
                    \Big(1 - \frac{1}{2 d}\Big) H_k
                    + 2d \expectB{\normsB{\nabla f(x^{k+1}) - \nabla f(x^{k})}^2}
                 +\expectB{\normsB{\widetilde{\nabla}f_{\delta}(x^k) - \nabla f(x^k)}^2},
            \end{split}
            \end{equation}
            where we use notations $H_k := \expect{\norms{h^k - \nabla f(x^k)}^2}$ and 

            \begin{equation}
            \label{eq:opf_d_nonstoch}
                \widetilde{\nabla}f_{\delta}(x) := \sum_{i=1}^d \frac{f_{\delta}(x + \tau e_i) - f_{\delta}(x - \tau e_i)}{2 \tau} e_i. 
            \end{equation}
        \end{lemma}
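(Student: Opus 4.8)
The plan is to condition on all randomness generated before step $k$, so that $x^k$ and $h^k$ are fixed, and to exploit that the only fresh randomness in the \texttt{JAGUAR} update is the uniformly sampled coordinate $i \in \overline{1,d}$. First I would insert the anchor point $\nabla f(x^k)$ and apply Young's inequality with a free parameter $\beta > 0$:
$$\norms{h^{k+1} - \nabla f(x^{k+1})}^2 \leq (1+\beta)\norms{h^{k+1} - \nabla f(x^k)}^2 + \left(1 + \tfrac{1}{\beta}\right)\norms{\nabla f(x^{k+1}) - \nabla f(x^k)}^2.$$
This separates the progress of the memory vector $h$ toward $\nabla f(x^k)$ from the drift of the gradient induced by the move from $x^k$ to $x^{k+1}$.

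The key observation is that line \ref{line:h^k_nonstoch} overwrites only the $i$-th coordinate of $h^k$, replacing $h^k_i$ by the finite difference $g^k_i$, where I abbreviate $g^k := \widetilde{\nabla} f_\delta(x^k)$ as defined in \eqref{eq:opf_d_nonstoch}. Since every other coordinate is left untouched,
$$\norms{h^{k+1} - \nabla f(x^k)}^2 = \norms{h^k - \nabla f(x^k)}^2 - (h^k_i - \nabla_i f(x^k))^2 + (g^k_i - \nabla_i f(x^k))^2.$$
Averaging over the uniform choice of $i$ turns each single-coordinate square into $\tfrac{1}{d}$ of the corresponding full squared norm, giving
$$\Ek{\norms{h^{k+1} - \nabla f(x^k)}^2} = \left(1 - \tfrac{1}{d}\right)\norms{h^k - \nabla f(x^k)}^2 + \tfrac{1}{d}\norms{\widetilde{\nabla} f_\delta(x^k) - \nabla f(x^k)}^2.$$
This is precisely where the $(1 - 1/d)$ contraction and the full-approximation error term both emerge.

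It then remains to tune $\beta$. Combining the two displays and choosing $\beta = \tfrac{1}{2(d-1)}$ makes $(1+\beta)\left(1 - \tfrac{1}{d}\right) = 1 - \tfrac{1}{2d}$ exactly, while $1 + \tfrac{1}{\beta} = 2d - 1 \leq 2d$ and $\tfrac{1+\beta}{d} \leq 1$ for $d \geq 2$, so all three coefficients in \eqref{eq:h_vs_nabla_nonstoch} are matched or dominated. Applying the tower property to pass from the conditional $\Ek{\cdot}$ to the unconditional $\mathbb{E}[\cdot]$ then yields the claimed recursion; the degenerate case $d = 1$ is immediate since the update sets $h^{k+1} = g^k$ outright.

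The one step that deserves care is that $x^{k+1}$ is produced by \texttt{Proc} from $h^{k+1}$, and hence is itself a function of the random index $i$; I expect this to be the main subtlety. It is handled by keeping the drift term $\norms{\nabla f(x^{k+1}) - \nabla f(x^k)}^2$ under the expectation over $i$ rather than treating $x^{k+1}$ as deterministic — which is exactly why that term sits inside $\mathbb{E}[\cdot]$ in the statement. Everything else reduces to the coordinate-wise averaging above and the single application of Young's inequality.
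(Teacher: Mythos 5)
Your argument is correct for $d \ge 2$, and it reaches the stated bound with room to spare: with $\beta = \tfrac{1}{2(d-1)}$ the three coefficients come out as $(1+\beta)\bigl(1-\tfrac{1}{d}\bigr) = 1-\tfrac{1}{2d}$ exactly, $1+\tfrac{1}{\beta} = 2d-1 \le 2d$, and $\tfrac{1+\beta}{d} = \tfrac{2d-1}{2d(d-1)} \le 1$. The paper's proof (Lemma \ref{lemma:h_vs_nablaf} specialized to $\sigma_\nabla^2 = \sigma_f^2 = 0$) rests on the same two pillars --- the $(1-\tfrac{1}{d})$ contraction coming from uniform coordinate replacement, and a Young-type inequality to absorb the gradient drift --- but organizes them in the opposite order: it expands $h^{k+1}-\nabla f(x^{k+1})$ into three pieces via the projections $e_ie_i^T$ and $I-e_ie_i^T$, kills two cross terms by orthogonality, pushes the conditional expectation through $\Ek{I - e_ie_i^T} = (1-\tfrac{1}{d})I$, and bounds the surviving cross term by Fenchel--Young with parameter $2d$. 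Your route (Young first, then the exact coordinate-replacement identity, then averaging over $i$) buys something real: read against Algorithm \ref{alg:iter}, the paper's treatment of its terms $\circledThree$ and $\circledFour$ tacitly treats the drift $\nabla f(x^{k+1})-\nabla f(x^k)$ as independent of the sampled coordinate, which is false since $x^{k+1}$ is a function of $h^{k+1}$ and hence of $i$ (an index shift in the appendix, where the finite difference is written at the same point as the new gradient, hides this). By applying Young pointwise and only then averaging, keeping the drift inside the full expectation, you handle exactly the subtlety you flagged, so on this point your argument is the more careful of the two.

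One genuine flaw: the case $d=1$ is not ``immediate.'' There $h^{k+1} = \widetilde{\nabla}f_\delta(x^k)$, and any Young splitting of $\norms{h^{k+1}-\nabla f(x^{k+1})}^2$ places a factor strictly greater than $1$ on $\norms{\widetilde{\nabla}f_\delta(x^k)-\nabla f(x^k)}^2$, whereas \eqref{eq:h_vs_nabla_nonstoch} allows only $1$. In fact no argument can close this case with the main-text indexing: take $h^k = \nabla f(x^k)$, approximation error $a := \widetilde{\nabla}f_\delta(x^k)-\nabla f(x^k) = 1$ and drift $b := \nabla f(x^{k+1})-\nabla f(x^k) = -0.1$ (realizable for suitable noise $\delta$ and \texttt{Proc}); then the left side of \eqref{eq:h_vs_nabla_nonstoch} is $(a-b)^2 = 1.21$ while the right side is $0 + 2b^2 + a^2 = 1.02$. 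Since $d=1$ is degenerate for coordinate-wise zero-order estimation, and the paper's own proof covers it only under the shifted indexing in which the fresh coordinate is evaluated at the same point as the target gradient, the correct fix is simply to state your proof for $d \ge 2$ rather than to assert the scalar case.
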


        For a detailed proof of Lemma \ref{lemma:h_vs_nablaf_nonstoch}, see the proof of Lemma \ref{lemma:h_vs_nablaf} in Appendix \ref{appendix:subsec_jaguar_nonstoch} in the case of $\sigma_\nabla^2 = \sigma_f^2 = 0$ (see details in Section \ref{sect:JAGUAR_stoch}). 
        
        \textbf{Discussion.} We do not need to make any assumptions to satisfy Lemma \ref{lemma:h_vs_nablaf_nonstoch}, since in its proof we have only used the form of Algorithm \ref{alg:JAGUAR_nonstoch}. This means that the performance of the \texttt{JAGUAR} approximation depends only on the quality of the full-approximation $\widetilde{\nabla}f_{\delta}(x)$ and the closeness of the points $x^{k+1}$ and $x^k$, generated by the Algorithm \ref{alg:iter}. According to Lemma \ref{lemma:tilde_vs_notilda} in Appendix \ref{appendix:subsec_full_nonstoch}, we can estimate quality of the $\widetilde{\nabla}f_{\delta}(x)$: under Assumptions \ref{ass:smooth_nonstoch} and \ref{ass:bounded_nonstoch} for all $x \in Q$ it holds that 
        $
            \|\widetilde{\nabla}f_{\delta}(x) - \nabla f(x)\|^2 
            \leq
            d L^2 \tau^2 
            + (2 d \Delta^2 / \tau^2) .
        $
        
        Let us analyse the formula \eqref{eq:h_vs_nabla_nonstoch}, and show that using \texttt{JAGUAR} gradient approximation (Algorithm \ref{alg:JAGUAR_nonstoch}) gives us the same estimates as using $\widetilde{\nabla}f_{\delta}(x)$. Many optimization algorithms use a sufficiently small step size $\gamma$, therefore $\texttt{Proc}(x^k, \texttt{grad\_est}) \approx x^k.$ And we can assume that $\nabla f(x^{k+1}) \approx \nabla f(x^{k})$ (for a specific choice of $\gamma$ see Theorem \ref{theorem:JAGUAR_nonstoch}). We can unroll \eqref{eq:h_vs_nabla_nonstoch} and, if we consider $k \gg d$, then we can obtain that $\E[\|h^k - \nabla f(x^k)\|^2] = \mathcal{O}( \|\widetilde{\nabla}f_{\delta}(x^k) - \nabla f(x^k)\|^2),$ i.e. it is the same estimate as for the full-approximation \eqref{eq:opf_d_nonstoch}, however now we make $\mathcal{O}(1)$ oracle calls at each iteration.

        In the following sections, we implement the \texttt{JAGUAR} approximation in the Frank-Wolfe algorithm (see Sections \ref{sect:FW_via_JAGUAR}, \ref{sect:JAGUAR_stoch}) and in Gradient Descent (see Section \ref{sect:GD}).

    \subsection{Frank-Wolfe via \texttt{JAGUAR}}
    \label{sect:FW_via_JAGUAR}

        In this section, we introduce the Frank-Wolfe algorithm, witch solves the problem \eqref{eq:problem_nonstoch} using the \texttt{JAGUAR} gradient approximation (Algorithm \ref{alg:FW}).
        
        \begin{algorithm}[H]
    	\caption{\texttt{FW via JAGUAR}. Deterministic case}
    	\label{alg:FW}
        	\begin{algorithmic}[1]
        		\State {\bf Input:} $x^0 \in Q$, $h^0 = \widetilde\nabla f_{\delta}(x^0)$, $\gamma_k$, $\tau$
        	    \For {$k = 0, 1, 2, ... , N$}
                    \State $h^{k+1} = $ \texttt{JAGUAR} $\left( x^k, h^k \right)$ \label{line:jaguar_nonstoch}
                    \State $s^k = \underset{x \in Q}{\arg\min}\left<s, h^{k+1} \right>$ \label{line:s^k}
                    \State $x^{k+1} = x^k + \gamma_k (s^k - x^k)$ \label{line:x^k}
                \EndFor
        	\end{algorithmic}
        \end{algorithm}

        Using a certain form of the \texttt{Proc} function in Algorithm \ref{alg:FW}, we can unroll the results of Lemma \ref{lemma:h_vs_nablaf_nonstoch} to carefully tune the step size $\gamma_k$.
        \begin{theorem}[Step tuning for \texttt{FW via JAGUAR} (Algorithm \ref{alg:FW}). Deterministic case]
        \label{theorem:JAGUAR_nonstoch}
            Consider Assumptions \ref{ass:compact} and \ref{ass:smooth_nonstoch}. For $h^k$, generated by Algorithm \ref{alg:FW}, we can take
            $\gamma_k = 4/(k + 8d),$
            then the following inequality holds:
            
            \begin{equation*}
                \E[\|h^k - \nabla f(x^k)\|^2] = 
                \mathcal{O} \Big( \|\widetilde{\nabla}f_{\delta}(x^k) - \nabla f(x^k)\|^2
                +\frac{d^2 L^2 D^2}{(k + 8d)^2} \Big).
            \end{equation*}
        \end{theorem}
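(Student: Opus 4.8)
The plan is to take the one-step recursion of Lemma~\ref{lemma:h_vs_nablaf_nonstoch} and turn it into a closed-form decay estimate by exploiting the specific structure of the Frank--Wolfe update. First I would control the gradient-drift term $\expectB{\norms{\nabla f(x^{k+1}) - \nabla f(x^k)}^2}$. Since Algorithm~\ref{alg:FW} sets $x^{k+1} = x^k + \gamma_k(s^k - x^k)$ with $s^k, x^k \in Q$, Assumption~\ref{ass:compact} gives $\norms{x^{k+1}-x^k} = \gamma_k\norms{s^k - x^k} \le \gamma_k D$, and then $L$-smoothness (Assumption~\ref{ass:smooth_nonstoch}) yields $\norms{\nabla f(x^{k+1}) - \nabla f(x^k)}^2 \le L^2\gamma_k^2 D^2$. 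Substituting $\gamma_k = 4/(k+8d)$ bounds that term by $16 L^2 D^2/(k+8d)^2$.

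Plugging this into \eqref{eq:h_vs_nabla_nonstoch} produces the scalar recursion
\[
H_{k+1} \le \Big(1 - \tfrac{1}{2d}\Big) H_k + \frac{32\, d L^2 D^2}{(k+8d)^2} + G_k, \qquad G_k := \expectB{\norms{\widetilde\nabla f_\delta(x^k) - \nabla f(x^k)}^2},
\]
where, since every realization of $x^k$ lies in $Q$, Lemma~\ref{lemma:tilde_vs_notilda} bounds each $G_k$ uniformly by $G := d L^2\tau^2 + 2d\Delta^2/\tau^2$. The task reduces to solving a recursion $H_{k+1} \le (1-a)H_k + c_k + G_k$ with constant contraction factor $a = 1/(2d)$ and a polynomially decaying forcing term $c_k = \Theta\big(d L^2 D^2/(k+8d)^2\big)$.

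Rather than unrolling the geometric sum explicitly (which forces one to separately estimate a transient window of length $\sim 2d$), I would solve it by an inductive ansatz: claim $H_k \le A/(k+8d)^2 + B G$ and determine admissible constants. The constant part requires $(1-a)B + 1 \le B$, i.e. $B \ge 1/a = 2d$. The decaying part requires $(1-a)A/(k+8d)^2 + 32\, d L^2 D^2/(k+8d)^2 \le A/(k+1+8d)^2$; writing $u = k+8d$ and using $u^2/(u+1)^2 \ge 1 - 2/u$, this reduces to $A\,(a - 2/u) \ge 32\, d L^2 D^2$. The base case is immediate because Algorithm~\ref{alg:FW} initializes $h^0 = \widetilde\nabla f_\delta(x^0)$, so $H_0 = G_0 \le G \le B G$.

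The crux is exactly the inequality $A(a - 2/u) \ge 32\, d L^2 D^2$ in the inductive step, and here the shift $8d$ in the step size becomes essential: since $u = k+8d \ge 8d$ we have $2/u \le 1/(4d)$, hence $a - 2/u \ge 1/(2d) - 1/(4d) = 1/(4d) > 0$, and choosing $A = 128\, d^2 L^2 D^2$ closes the induction. This is precisely why $\gamma_k$ is shifted by $8d$ rather than a smaller constant: a shift $\lesssim 4d$ would make the gap $a - 2/u$ nonpositive for small $k$ and the ansatz would break. Collecting the two pieces gives $H_k \le 128\, d^2 L^2 D^2/(k+8d)^2 + 2d\, G$, which is the claimed $\mathcal{O}\big(\norms{\widetilde\nabla f_\delta(x^k)-\nabla f(x^k)}^2 + d^2 L^2 D^2/(k+8d)^2\big)$ bound, the first term carrying a dimension factor $1/a = 2d$ from the steady-state weight that the $\mathcal{O}$ notation suppresses.
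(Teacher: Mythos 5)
Your solution of the scalar recursion by an inductive ansatz is itself correct, and it is a clean substitute for the paper's general Recursion Lemma (Lemma \ref{lem:recursion}, which the paper invokes with $\alpha_0=0$, $\beta_0=1/(2d)$; your conditions $B\ge 1/a=2d$, $A=128\,d^2L^2D^2$, and your explanation of why the shift $8d$ keeps $a-2/u\ge 1/(4d)$, mirror exactly the paper's requirement $k_0\ge 8d$). The genuine gap is in which one-step recursion you feed into it. You start from the main-text Lemma \ref{lemma:h_vs_nablaf_nonstoch}, whose additive forcing term is the \emph{full} approximation error $G_k=\expect{\norms{\widetilde{\nabla}f_{\delta}(x^k)-\nabla f(x^k)}^2}\le dL^2\tau^2+2d\Delta^2/\tau^2$. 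Any solution of such a recursion necessarily multiplies the constant forcing term by the steady-state weight $1/a=2d$ (your $B\ge 2d$ is forced, not an artifact), so what you actually prove is
\begin{equation*}
    H_k \;\le\; \frac{128\,d^2L^2D^2}{(k+8d)^2} \;+\; 2d\left(dL^2\tau^2+\frac{2d\Delta^2}{\tau^2}\right)
    \;=\;\mathcal{O}\left(\frac{d^2L^2D^2}{(k+8d)^2}+d^2L^2\tau^2+\frac{d^2\Delta^2}{\tau^2}\right),
\end{equation*}
which is a factor $d$ weaker than the theorem in the approximation term. Your closing remark that the $\mathcal{O}$ ``suppresses'' the factor $2d$ is exactly where the argument breaks: in this paper $d$ is a tracked problem parameter, not an absolute constant, and the entire content of the theorem is that the steady-state error matches the full-approximation level $dL^2\tau^2+d\Delta^2/\tau^2$ of Lemma \ref{lemma:tilde_vs_notilda}, not $d$ times it. The loss propagates downstream: Theorem \ref{theorem:FW_nonstoch} would inherit $dLD\tau+d\Delta D/\tau$ in place of $\sqrt{d}LD\tau+\sqrt{d}\Delta D/\tau$, and the admissible noise level in Corollary \ref{cor:FW_nonstoch} would shrink by a factor of $d$.

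The fix is to use the appendix version of the variance lemma, Lemma \ref{lemma:h_vs_nablaf} with $\sigma_f^2=\sigma_\nabla^2=0$, which is what the paper's proof does. There the per-step additive term is not $G_k$ but $\tfrac{1}{d}\expect{\norms{\widetilde{\nabla}f_{\delta}(x^k)-\nabla f(x^k)}^2}\le L^2\tau^2+2\Delta^2/\tau^2$ --- exactly a factor $d$ smaller, because the \texttt{JAGUAR} step overwrites only the single sampled coordinate, so only a $1/d$ fraction of the full-approximation error enters per iteration (this is the term $\circledTwo$ in the paper's proof of that lemma; the main-text statement of Lemma \ref{lemma:h_vs_nablaf_nonstoch} is simply a factor-$d$ looser than the appendix lemma it points to, and it is too weak to yield the theorem with dimension-independent constants). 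With this tighter input, your induction goes through verbatim with $G$ replaced by $G/d$, and the unavoidable steady-state weight $2d$ then restores precisely $\mathcal{O}(dL^2\tau^2+d\Delta^2/\tau^2)$, i.e.\ the full-approximation level claimed in the theorem. Everything else in your argument --- the drift bound $\norms{x^{k+1}-x^k}\le\gamma_k D$ via Assumption \ref{ass:compact}, the smoothness step, and the base case from $h^0=\widetilde{\nabla}f_{\delta}(x^0)$ --- is sound.
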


        From Theorem \ref{theorem:JAGUAR_nonstoch} we can conclude that after $\mathcal{O}\left(\sqrt{d} D / \tau\right)$ steps we get the same estimate as in the full-approximation \eqref{eq:opf_d_nonstoch}. We now examine the convergence of Algorithm \ref{alg:FW}.

        \begin{theorem}[Convergence rate of \texttt{FW via JAGUAR} (Algorithm \ref{alg:FW})]
        \label{theorem:FW_nonstoch}
            Consider Assumptions \ref{ass:compact}, \ref{ass:smooth_nonstoch}, \ref{ass:conv}, \ref{ass:bounded_nonstoch}.
            If we take 
            $\gamma_k = 4/(k + 8d),$
            then \texttt{FW via JAGUAR} (Algorithm \ref{alg:FW}) has the following convergence rate

            \begin{equation*}
                \expect{f(x^{N}) - f^*}
                =
                \mathcal{O} \Big( \frac{d \max\{L D^2, F_0 \}}{N + 8d}
                + \sqrt{d} L D \tau + \frac{\sqrt{d} \Delta D}{\tau}\Big),
            \end{equation*}
        \end{theorem}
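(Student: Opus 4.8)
The plan is to combine the standard Frank--Wolfe descent argument with the gradient-approximation guarantee already established in Theorem \ref{theorem:JAGUAR_nonstoch}. First I would set $F_k := f(x^k) - f^*$ and apply $L$-smoothness (Assumption \ref{ass:smooth_nonstoch}) to the update $x^{k+1} = x^k + \gamma_k(s^k - x^k)$ from line \ref{line:x^k}:
\begin{equation*}
F_{k+1} \leq F_k + \gamma_k \langle \nabla f(x^k), s^k - x^k\rangle + \frac{L\gamma_k^2}{2}\|s^k - x^k\|^2.
\end{equation*}
Since $s^k$ minimizes $\langle s, h^{k+1}\rangle$ over $Q$ (line \ref{line:s^k}), for the minimizer $x^* \in Q$ we have $\langle h^{k+1}, s^k - x^k\rangle \leq \langle h^{k+1}, x^* - x^k\rangle$. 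Writing $\nabla f(x^k) = h^{k+1} + (\nabla f(x^k) - h^{k+1})$, using convexity (Assumption \ref{ass:conv}) to get $\langle \nabla f(x^k), x^* - x^k\rangle \leq -F_k$, and bounding $\|s^k - x^k\|, \|x^* - s^k\| \leq D$ via Assumption \ref{ass:compact}, I would arrive at
\begin{equation*}
F_{k+1} \leq (1 - \gamma_k) F_k + \gamma_k \langle h^{k+1} - \nabla f(x^k), x^* - s^k\rangle + \frac{L\gamma_k^2 D^2}{2},
\end{equation*}
and Cauchy--Schwarz bounds the middle term by $\gamma_k D \|h^{k+1} - \nabla f(x^k)\|$.

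Next I would take expectations and control $\expect{\|h^{k+1} - \nabla f(x^k)\|}$. By Jensen this is at most $\sqrt{\expect{\|h^{k+1} - \nabla f(x^k)\|^2}}$, and a triangle inequality together with $L$-smoothness relates it to $H_{k+1}$ from Lemma \ref{lemma:h_vs_nablaf_nonstoch} plus a movement term of order $L\gamma_k D$. Feeding in Theorem \ref{theorem:JAGUAR_nonstoch} and the full-approximation estimate $\|\widetilde{\nabla}f_{\delta}(x) - \nabla f(x)\|^2 \leq dL^2\tau^2 + 2d\Delta^2/\tau^2$ (Lemma \ref{lemma:tilde_vs_notilda}, which uses Assumption \ref{ass:bounded_nonstoch}), I obtain
\begin{equation*}
\sqrt{\expect{\|h^{k+1} - \nabla f(x^k)\|^2}} = \mathcal{O}\Big(\sqrt{d}\, L\tau + \frac{\sqrt{d}\,\Delta}{\tau} + \frac{dLD}{k+8d}\Big).
\end{equation*}
Substituting back, writing $r_k := \expect{F_k}$, and absorbing the resulting $\mathcal{O}(dLD^2/(k+8d)^2)$ contribution into the smoothness term, the recursion collapses to the scalar form
\begin{equation*}
r_{k+1} \leq (1 - \gamma_k) r_k + \gamma_k C_1 + \frac{C_2}{(k+8d)^2},
\end{equation*}
with a persistent noise floor $C_1 = \mathcal{O}(\sqrt{d}\,LD\tau + \sqrt{d}\,\Delta D/\tau)$ and a decaying budget $C_2 = \mathcal{O}(dLD^2)$.

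Finally I would solve this recursion with $\gamma_k = 4/(k+8d)$ by induction on the ansatz $r_k \leq A/(k+8d) + C_1$. The floor terms satisfy $(1-\gamma_k)C_1 + \gamma_k C_1 = C_1$ exactly, so they reproduce themselves; the decaying part reduces to the elementary inequality $(t+1)C_2 \leq (3t+4)A$ with $t = k+8d$, which holds as soon as $A \geq C_2$. The base case $r_0 = F_0 \leq A/(8d) + C_1$ forces $A \geq 8d\,F_0$, and combined with $A \geq C_2 = \mathcal{O}(dLD^2)$ this is precisely why the final bound carries $A = \mathcal{O}(d\max\{LD^2, F_0\})$. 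Evaluating the ansatz at $k = N$ then yields the claimed rate. I expect the main obstacle to be the bookkeeping around the index shift — the descent step needs $\|h^{k+1} - \nabla f(x^k)\|$, whereas Lemma \ref{lemma:h_vs_nablaf_nonstoch} tracks $H_{k+1} = \expect{\|h^{k+1} - \nabla f(x^{k+1})\|^2}$ — and verifying that the extra $L\gamma_k D$ movement term stays within the order of the existing $\mathcal{O}(L\gamma_k^2 D^2)$ term; the remaining steps are the standard Frank--Wolfe induction.
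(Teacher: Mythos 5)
Your proposal is correct and follows essentially the same route as the paper's proof: a per-iteration Frank--Wolfe descent inequality carrying a gradient-error term, Jensen's inequality plus Theorem \ref{theorem:JAGUAR_nonstoch} (with Lemma \ref{lemma:tilde_vs_notilda}) to bound that error, and then solving the resulting recursion under $\gamma_k = 4/(k+8d)$. The only difference is self-containedness: where the paper cites Lemma 2 of \cite{mokhtari2020stochastic} for the descent inequality and its own Recursion Lemma (Lemma \ref{lem:recursion}) for the last step, you derive both inline --- and your derivation, your induction on the ansatz $r_k \leq A/(k+8d) + C_1$ (which needs exactly $A \geq \max\{C_2, 8d F_0\}$, matching the $d\max\{LD^2, F_0\}$ factor), and your explicit handling of the $h^{k+1}$ versus $\nabla f(x^k)$ index shift (which the paper glosses over) all check out.
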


The results of Theorem \ref{theorem:FW_nonstoch} can be rewritten as an upper complexity bound on a number of iterations of Algorithm \ref{alg:FW}, using the appropriate smoothing parameter $\tau$ and noise bound $\Delta$.

        \begin{corollary}
        \label{cor:FW_nonstoch}
            Under the conditions of Theorem \ref{theorem:FW_nonstoch}, choosing $\gamma_k, \tau(\varepsilon), \Delta(\varepsilon)$ as in the Appendix \ref{appendix:FW_nonstoch_full}  
            in order to achieve an $\varepsilon$-approximate solution (in terms of $\E[f(x^N) - f^*] \leq \varepsilon$) it takes

            \begin{equation*}
                \mathcal{O} \Big( \frac{d \max\{L D^2, F_0\}}{\varepsilon} \Big) \text{ iterations of Algorithm \ref{alg:FW}.}
            \end{equation*}
        \end{corollary}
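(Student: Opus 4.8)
The plan is to start from the three-term bound established in Theorem \ref{theorem:FW_nonstoch}, namely that $\mathbb{E}[f(x^N) - f^*]$ is at most a constant times
$$
\frac{d \max\{L D^2, F_0\}}{N + 8d} + \sqrt{d} L D \tau + \frac{\sqrt{d} \Delta D}{\tau},
$$
and to force each of the three summands to be at most $\varepsilon/3$. Since the three terms depend on essentially disjoint sets of free parameters---the first on the iteration count $N$, the second on the smoothing parameter $\tau$, and the third on both $\tau$ and the admissible noise level $\Delta$---they can be balanced almost independently, and summing three contributions of size $\varepsilon/3$ yields the desired total accuracy $\varepsilon$.

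First I would handle the smoothing bias term. Requiring $\sqrt{d} L D \tau \leq \varepsilon/3$ forces $\tau = \mathcal{O}\!\left(\varepsilon/(\sqrt{d} L D)\right)$, and I would fix $\tau$ exactly at this threshold; this is the quantity $\tau(\varepsilon)$ referenced in the statement. Next, substituting this choice into the noise term $\sqrt{d} \Delta D / \tau$ and demanding that it also be at most $\varepsilon/3$ yields the admissible noise level $\Delta = \mathcal{O}\!\left(\varepsilon^2/(d L D^2)\right)$, which is the $\Delta(\varepsilon)$ of the statement. Finally, forcing the optimization term to satisfy $d \max\{L D^2, F_0\}/(N + 8d) \leq \varepsilon/3$ gives $N + 8d = \Omega\!\left(d \max\{L D^2, F_0\}/\varepsilon\right)$, hence $N = \mathcal{O}\!\left(d \max\{L D^2, F_0\}/\varepsilon\right)$ iterations, which is precisely the claimed complexity.

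The computation is routine once the bound of Theorem \ref{theorem:FW_nonstoch} is in hand; the only point requiring care is the coupling between $\tau$ and $\Delta$ appearing in the last two terms. Rather than optimizing $\tau$ to balance the bias term $\sqrt{d} L D \tau$ against the noise term $\sqrt{d} \Delta D / \tau$ for a \emph{fixed} $\Delta$---which would instead produce a noise-dependent accuracy floor of order $\sqrt{d} D \sqrt{L \Delta}$ at the optimum $\tau = \sqrt{\Delta/L}$---here $\Delta$ is treated as a tunable requirement, so $\tau$ is dictated solely by the bias term and the needed smallness of $\Delta$ is read off afterwards. I expect this interdependence, together with the consistency check that the single chosen $\tau(\varepsilon)$ simultaneously drives both $\tau$-dependent terms below $\varepsilon/3$, to be the main (though mild) obstacle; everything else is a direct substitution and collection of the dominant term.
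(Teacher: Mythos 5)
Your proposal is correct and follows essentially the same route as the paper's proof in Appendix \ref{appendix:FW_nonstoch_full}: fix $\tau = \mathcal{O}\left(\varepsilon/(\sqrt{d} L D)\right)$ from the bias term, read off $\Delta = \mathcal{O}\left(\varepsilon \tau /(\sqrt{d} D)\right) = \mathcal{O}\left(\varepsilon^2/(d L D^2)\right)$ from the noise term, and then choose $N = \mathcal{O}\left(d \max\{L D^2, F_0\}/\varepsilon\right)$ from the optimization term. Your closing remark about the alternative fixed-$\Delta$ balancing, with $\tau = \sqrt{\Delta/L}$ and accuracy floor $\sqrt{d L D^2 \Delta}$, is exactly the paper's Corollary \ref{cor:FW_nonstoch_1}, so you have in effect reproduced both versions.
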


        For a detailed proof of Theorems  \ref{theorem:JAGUAR_nonstoch}, \ref{theorem:FW_nonstoch} and Corollaries \ref{cor:FW_nonstoch}, \ref{cor:FW_nonstoch_1}, see Appendix \ref{appendix:subsec_jaguar_deter}. 
        
        \textbf{Discussion.} The results of Theorem \ref{theorem:FW_nonstoch} match with results \cite{frank1956algorithm, jaggi2013revisiting} in which the authors used a true gradient and got the result of the form $$\expect{f(x^{N}) - f^*} = \mathcal{O} \left(\max\{LD^2 ; f(x^0) - f^* \}/ N \right).$$ In the zero-order case, terms of the form $\mathcal{O}\left(\text{poly}(\tau) + \text{poly}(\Delta / \tau) \right)$ are inevitable, since they are crucial for the approximation of the true gradient and always affect the convergence of zero-order methods \cite{risteski2016algorithms, sahu2018distributed, liu2018zeroth, beznosikov2020derivative}. The factor $d$, which appears in our theoretical estimates compared to the first-order result, is related to the zero-order structure of the method.
        
        In Corollary \ref{cor:FW_nonstoch}, the dependence $ \Delta(\varepsilon)$ was obtained, which may seem incorrect, because usually the maximum noise is given by nature and we cannot reduce it. In this case, we should rewrite the dependence in the form $\varepsilon = \varepsilon(\Delta)$ and accordingly $\tau$ and $N$ start to depend on $\Delta$, not on $\varepsilon$ (see Appendix \ref{appendix:FW_nonstoch_full}). 
        In the rest of the corollaries in this paper, we will write the dependence as $\Delta(\varepsilon)$ for ease of presentation, but they can always be rewritten in terms of $\varepsilon(\Delta)$.
        
    \subsection{Gradient Descent via JAGUAR}
    \label{sect:GD}
        \begin{algorithm}[H]
    	\caption{\texttt{GD via JAGUAR}}
    	\label{alg:GD}
        	\begin{algorithmic}[1]
        		\State {\bf Input:} $x_0 \in \mathbb{R}^d, h^0 = \widetilde{\nabla} f_{\delta}(x^0)$, $\gamma$, $\tau$
        	    \For {$k = 0, 1, 2, ... , N$}
                    \State $h^{k+1} = $ JAGUAR $\left( x^k, h^k \right)$ 
                    \State $x^{k+1} = x^k - \gamma h^{k+1}$ 
                \EndFor
        	\end{algorithmic}
        \end{algorithm}

        In this section, we consider problem of the form
        $$
            f^* := \min_{x \in \mathbb{R}^d} f(x).
        $$
        We can not use FW-type algorithms in this setting because $Q=\R^d$ is not a bounded set, therefore, we consider Gradient Descent with \texttt{JAGUAR} approximation. We now examine the convergence of Algorithm \ref{alg:GD}.

        \begin{theorem}[Convergence rate of \texttt{GD via JAGUAR} (Algorithm \ref{alg:GD}). Non-convex case]
        \label{theorem:GD}
            Consider Assumptions \ref{ass:smooth_nonstoch} and \ref{ass:bounded_nonstoch}. If we take 
            $\gamma \equiv 1/(4 d L),$
            then \texttt{GD via JAGUAR} (Algorithm \ref{alg:GD}) in the non-convex case has the following convergence rate

            \begin{equation*}
                \expect{\norms{\nabla f(\widehat{x}_N)}^2} = \mathcal{O} \left(\frac{d L \Phi_0}{N + 1} + d L^2 \tau^2 + \frac{d \Delta^2}{\tau^2} \right),
            \end{equation*}

            where $\widehat{x}_N$ is chosen uniformly from $\{ x^k \}_{k = 0}^{N}$ and $\Phi_0 := f(x^0) - f^* + d \gamma \norms{h^0 - \nabla f(x^0)}^2$.
        \end{theorem}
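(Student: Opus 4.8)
The plan is to run a Lyapunov argument that couples the objective gap with the gradient-approximation error. Concretely, I would track the potential $\Psi_k := \expect{f(x^k)} - f^* + d\gamma H_k$, where $H_k = \expect{\norms{h^k - \nabla f(x^k)}^2}$ is exactly the quantity controlled by Lemma~\ref{lemma:h_vs_nablaf_nonstoch}. The weight $d\gamma$ is chosen so that $\Psi_0$ reproduces $\Phi_0 = f(x^0) - f^* + d\gamma\norms{h^0 - \nabla f(x^0)}^2$, since $h^0 = \widetilde{\nabla}f_\delta(x^0)$ is deterministic. Two ingredients feed $\Psi_k$: a one-step descent inequality from $L$-smoothness and the update $x^{k+1} = x^k - \gamma h^{k+1}$, and the memory recursion \eqref{eq:h_vs_nabla_nonstoch} of Lemma~\ref{lemma:h_vs_nablaf_nonstoch}.

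For the descent step I would begin from $L$-smoothness, $f(x^{k+1}) \le f(x^k) - \gamma\dotprod{\nabla f(x^k)}{h^{k+1}} + \tfrac{L\gamma^2}{2}\norms{h^{k+1}}^2$, and rewrite the inner product with the polarization identity $\dotprod{a}{b} = \tfrac12(\norms{a}^2 + \norms{b}^2 - \norms{a-b}^2)$. This gives $f(x^{k+1}) \le f(x^k) - \tfrac{\gamma}{2}\norms{\nabla f(x^k)}^2 - \tfrac{\gamma}{2}(1 - L\gamma)\norms{h^{k+1}}^2 + \tfrac{\gamma}{2}\norms{h^{k+1} - \nabla f(x^k)}^2$. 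The reason to keep the middle term rather than discard it is that it is genuinely negative, because $L\gamma = 1/(4d) \le 1/4$; this negative reserve is what will cancel the single channel through which the memory error couples back into the iterate. I would also note the bookkeeping subtlety that the descent error is measured at $x^k$, whereas $H_{k+1}$ is measured at $x^{k+1}$; these are reconciled by $\norms{h^{k+1}-\nabla f(x^k)}^2 \le 2\norms{h^{k+1}-\nabla f(x^{k+1})}^2 + 2L^2\gamma^2\norms{h^{k+1}}^2$, again paying only with a $\norms{h^{k+1}}^2$ term.

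Next I would substitute Lemma~\ref{lemma:h_vs_nablaf_nonstoch} into $\Psi_{k+1}-\Psi_k$: smoothness and the update give $\norms{\nabla f(x^{k+1}) - \nabla f(x^k)}^2 \le L^2\gamma^2\norms{h^{k+1}}^2$, and Lemma~\ref{lemma:tilde_vs_notilda} bounds the last term of \eqref{eq:h_vs_nabla_nonstoch} by $\sigma^2 := dL^2\tau^2 + 2d\Delta^2/\tau^2$. Collecting terms, the coefficient of $\norms{h^{k+1}}^2$ becomes $-\tfrac{\gamma}{2}(1-L\gamma) + (\tfrac{\gamma}{2}+d\gamma)\,2dL^2\gamma^2$, which with $\gamma = 1/(4dL)$ one checks is nonpositive; simultaneously the coefficient of $H_k$ satisfies $(\tfrac{\gamma}{2}+d\gamma)(1 - \tfrac{1}{2d}) \le d\gamma$, so the memory term telescopes. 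The \textbf{main obstacle} is exactly this simultaneous balancing: the contraction factor in Lemma~\ref{lemma:h_vs_nablaf_nonstoch} is only $1 - \tfrac{1}{2d}$, which forces the potential weight to be as large as $d\gamma$, and one must verify that this large weight does not flip the sign of the $\norms{h^{k+1}}^2$ coefficient. It is precisely the prescription $\gamma = 1/(4dL)$ that reconciles both constraints, and the quantitatively delicate part is tracking how the residual full-approximation error $\sigma^2$ accumulates against this slow contraction, since this determines the noise floor $dL^2\tau^2 + d\Delta^2/\tau^2$ of the final bound.

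After the balancing the recursion collapses to $\Psi_{k+1} \le \Psi_k - \tfrac{\gamma}{2}\expect{\norms{\nabla f(x^k)}^2} + C\gamma\,\sigma^2$ for an absolute constant $C$. I would then telescope over $k = 0,\dots,N$, use $\Psi_{N+1} \ge 0$ together with $\Psi_0 = \Phi_0$, and divide by $\tfrac{\gamma}{2}(N+1)$ to obtain $\tfrac{1}{N+1}\sum_{k=0}^N \expect{\norms{\nabla f(x^k)}^2} \le \tfrac{2\Phi_0}{\gamma(N+1)} + 2C\sigma^2$. Since $\widehat{x}_N$ is drawn uniformly from $\{x^k\}_{k=0}^N$, the left-hand side is exactly $\expect{\norms{\nabla f(\widehat{x}_N)}^2}$; substituting $\gamma = 1/(4dL)$ turns the first term into $\mathcal{O}\!\left(dL\Phi_0/(N+1)\right)$, and inserting the definition of $\sigma^2$ produces the stated $\tau$- and $\Delta$-dependent terms, completing the argument.
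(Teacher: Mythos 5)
Your overall architecture is the same as the paper's: a Lyapunov potential coupling the objective gap with the memory error, a descent inequality obtained by polarization, Lemma~\ref{lemma:h_vs_nablaf_nonstoch}, and a telescoping sum. However, there is a genuine error at exactly the step you yourself flag as the main obstacle. Your reconciliation inequality $\norms{h^{k+1}-\nabla f(x^k)}^2 \le 2\norms{h^{k+1}-\nabla f(x^{k+1})}^2 + 2L^2\gamma^2\norms{h^{k+1}}^2$ carries a factor $2$, so the descent step contributes $\gamma H_{k+1}$ to the potential, not $\tfrac{\gamma}{2}H_{k+1}$, and the total weight multiplying the memory recursion is $(1+d)\gamma$, not $\tfrac{\gamma}{2}+d\gamma$. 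The telescoping condition then reads $(1+d)\gamma\left(1-\tfrac{1}{2d}\right)\le d\gamma$, i.e. $d+\tfrac12-\tfrac{1}{2d}\le d$, which is false for every $d\ge 2$. The leftover term $\gamma\left(\tfrac12-\tfrac{1}{2d}\right)H_k$ does not telescope and cannot simply be discarded: on $Q=\mathbb{R}^d$ neither $\norms{\nabla f(x^k)}$ nor $\norms{h^{k+1}}$ is a priori bounded, so $H_k$ has no uniform bound to invoke. The inequality you actually verify, $(\tfrac{\gamma}{2}+d\gamma)(1-\tfrac{1}{2d})\le d\gamma$, is obtained by silently dropping the factor $2$ introduced one line earlier.

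The slip is repairable, and the cleanest repair is what the paper does: avoid the reconciliation altogether by measuring the memory error at the point where the finite differences are taken. Track $\psi_k := \expect{\norms{h^{k+1}-\nabla f(x^k)}^2}$, the error of the estimator actually used for the step from $x^k$; the PAGE-type descent lemma (Lemma 2 of \cite{li2021page}) has error term exactly $\tfrac{\gamma}{2}\psi_k$, and the proof of Lemma~\ref{lemma:h_vs_nablaf} in Appendix~\ref{appendix:subsec_jaguar_nonstoch} establishes the contraction for precisely this quantity, $\psi_{k+1}\le\left(1-\tfrac{1}{2d}\right)\psi_k+2dL^2\expect{\norms{x^{k+1}-x^k}^2}+L^2\tau^2+\tfrac{2\Delta^2}{\tau^2}$. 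With the weight $d\gamma$ the cancellation is then exact, $\tfrac{\gamma}{2}+d\gamma\left(1-\tfrac{1}{2d}\right)=d\gamma$, no factor $2$ ever appears, and the remainder of your argument (negativity of the $\norms{x^{k+1}-x^k}^2$ coefficient at $\gamma=1/(4dL)$, telescoping, uniform sampling of $\widehat{x}_N$) goes through verbatim. Alternative patches within your own indexing also work: replace the factor-$2$ bound by Fenchel--Young with parameter $\beta\le\tfrac{1}{2d-1}$, so that the $H_{k+1}$ coefficient is $\tfrac{\gamma}{2}(1+\beta)$ at the price of an extra $\mathcal{O}(d)L^2\gamma^3\norms{h^{k+1}}^2$ term still absorbed by the negative reserve; or enlarge the potential weight to $2d\gamma$, for which $(1+2d)\gamma\left(1-\tfrac{1}{2d}\right)\le 2d\gamma$ does hold and which changes $\Phi_0$ only by a constant factor, harmless inside the $\mathcal{O}(\cdot)$. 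As written, though, the balancing step is incorrect, and it is the crux of the proof.
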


        \begin{corollary}
        \label{cor:GD}
            Under the conditions of Theorem \ref{theorem:GD}, choosing $\gamma, \tau, \Delta$ as in Appendix \ref{appendix:corr_GD_full}
            in order to achieve an $\varepsilon$-approximate solution (in terms of $\E[\|\nabla f(\widehat{x}_N)\|^2] \leq \varepsilon^2$, where $\widehat{x}_N$ is chosen uniformly from $\{ x^k \}_{k = 0}^{N}$) it takes

            \begin{equation*}
                \mathcal{O} \left(\frac{d L \Phi_0}{\varepsilon^2} \right) \text{ iterations of Algorithm \ref{alg:GD}.}
            \end{equation*}

        \end{corollary}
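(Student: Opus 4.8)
The plan is to read off the three error terms in the convergence bound of Theorem \ref{theorem:GD} and force each of them below a constant multiple of $\varepsilon^2$, then solve for the remaining free parameters in the order $\tau$, $\Delta$, $N$. Since $\gamma$ is already pinned to $1/(4dL)$ by Theorem \ref{theorem:GD}, the only quantities left to tune are the smoothing parameter $\tau$, the admissible noise level $\Delta$, and the iteration budget $N$. Concretely, I would split the target $\expect{\norms{\nabla f(\widehat{x}_N)}^2} \leq \varepsilon^2$ into the three budgets
\[
\frac{dL\Phi_0}{N+1} \lesssim \varepsilon^2, \qquad dL^2\tau^2 \lesssim \varepsilon^2, \qquad \frac{d\Delta^2}{\tau^2} \lesssim \varepsilon^2 .
\]
The second budget immediately yields $\tau = \mathcal{O}\big(\varepsilon/(\sqrt{d}\,L)\big)$; substituting this choice of $\tau$ into the third budget gives $\Delta = \mathcal{O}\big(\tau \varepsilon / \sqrt{d}\big) = \mathcal{O}\big(\varepsilon^2/(dL)\big)$; and the first budget then delivers the announced count $N = \mathcal{O}\big(dL\Phi_0/\varepsilon^2\big)$.

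The one point that requires care---and what I expect to be the main obstacle---is the self-referential role of $\Phi_0$. By definition $\Phi_0 = f(x^0) - f^* + d\gamma\norms{h^0 - \nabla f(x^0)}^2$ with $h^0 = \widetilde{\nabla}f_{\delta}(x^0)$, so $\Phi_0$ itself depends on $\tau$ and $\Delta$ through the initial full-approximation error, while the iteration count is expressed in terms of $\Phi_0$. To close this loop I would invoke Lemma \ref{lemma:tilde_vs_notilda}, which bounds $\norms{h^0 - \nabla f(x^0)}^2 \leq dL^2\tau^2 + 2d\Delta^2/\tau^2$. With the choices of $\tau$ and $\Delta$ fixed above, each summand is $\mathcal{O}(\varepsilon^2)$, hence $d\gamma\norms{h^0 - \nabla f(x^0)}^2 = \mathcal{O}(\varepsilon^2/L)$ stays bounded and does not blow up as $\tau \to 0$. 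Consequently $\Phi_0$ reduces to the genuine initial-gap quantity $f(x^0) - f^*$ up to a vanishing correction, so the count $\mathcal{O}\big(dL\Phi_0/\varepsilon^2\big)$ is well defined rather than circular.

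Finally, mirroring the discussion after Corollary \ref{cor:FW_nonstoch}, the dependence $\Delta = \Delta(\varepsilon)$ should be read as prescribing the accuracy $\varepsilon$ attainable for a given noise floor: inverting, one writes $\varepsilon = \varepsilon(\Delta)$ and lets $\tau$ and $N$ depend on $\Delta$ rather than on $\varepsilon$. This is a cosmetic rewriting that leaves the $\mathcal{O}\big(dL\Phi_0/\varepsilon^2\big)$ complexity unchanged. The whole argument is thus a routine term-balancing once Theorem \ref{theorem:GD} and Lemma \ref{lemma:tilde_vs_notilda} are in hand; no new estimates are needed beyond verifying that the $\tau,\Delta$-induced part of $\Phi_0$ is of lower order.
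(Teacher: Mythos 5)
Your proposal is correct and follows essentially the same route as the paper: the paper's proof of Corollary \ref{cor:GD} is exactly the term-balancing you describe, splitting the bound of Theorem \ref{theorem:GD} into the three budgets and reading off $\tau = \mathcal{O}\left(\varepsilon/(\sqrt{d}L)\right)$, $\Delta = \mathcal{O}\left(\varepsilon^2/(dL)\right)$, and $N = \mathcal{O}\left(dL\Phi_0/\varepsilon^2\right)$. Your additional verification via Lemma \ref{lemma:tilde_vs_notilda} that the $\tau,\Delta$-dependent part of $\Phi_0$ stays $\mathcal{O}(\varepsilon^2/L)$, so the complexity statement is not circular, is a point the paper's proof passes over silently; it is a welcome tightening but not a different argument.
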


        Under the PL-condition we explore a better convergence rate.

        \begin{assumption}[PL condition]
        \label{ass:PL}
            The function $f(x)$ satisfies PL-condition if $
                \exists~ \mu > 0 : \forall x \in \mathbb{R}^d \hookrightarrow \norms{\nabla f(x)}^2_2 \geq 2 \mu (f(x) - f^*) .$
        \end{assumption}

        \begin{theorem}[Convergence rate of \texttt{GD via JAGUAR} (Algorithm \ref{alg:GD}) PL-case]
        \label{theorem:GD_PL}
            Consider Assumptions \ref{ass:smooth_nonstoch}, \ref{ass:bounded_nonstoch} and \ref{ass:PL}. If we take 
            $\gamma \equiv 1/(4 d L) ,$
            then \texttt{GD via JAGUAR} (Algorithm \ref{alg:GD}) in the PL-condition case (Assumption \ref{ass:PL}) has the following convergence rate

            \begin{equation*}
                \expect{f(x^k) - f^*}
                = \mathcal{O} \left(F_0 \exp\left[-\frac{\mu N}{4 d L} \right] + \frac{d L^2 \tau^2 + d \Delta^2/\tau^2}{\mu} \right),
            \end{equation*}

        \end{theorem}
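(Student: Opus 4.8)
The plan is to track jointly the suboptimality $\expect{f(x^k) - f^*}$ and the estimator error $H_k = \expect{\norms{h^k - \nabla f(x^k)}^2}$ via a single Lyapunov function and to show it contracts geometrically up to an irreducible noise floor. I would first apply the descent lemma coming from $L$-smoothness (Assumption \ref{ass:smooth_nonstoch}) to the update $x^{k+1} = x^k - \gamma h^{k+1}$,
\[
f(x^{k+1}) \leq f(x^k) - \gamma \dotprod{\nabla f(x^k)}{h^{k+1}} + \tfrac{L\gamma^2}{2}\norms{h^{k+1}}^2 ,
\]
and then split $h^{k+1} = \nabla f(x^k) + (h^{k+1} - \nabla f(x^k))$, applying Young's inequality to the inner product and $\norms{h^{k+1}}^2 \leq 2\norms{\nabla f(x^k)}^2 + 2\norms{h^{k+1} - \nabla f(x^k)}^2$. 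The step $\gamma = 1/(4dL)$ is chosen precisely so that the coefficient of $\norms{\nabla f(x^k)}^2$ becomes negative, giving (after taking expectations) $\expect{f(x^{k+1}) - f^*} \leq \expect{f(x^k) - f^*} - c_1 \gamma \expect{\norms{\nabla f(x^k)}^2} + c_2 \gamma H_{k+1}$ for absolute constants $c_1, c_2 > 0$.

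Next I would use the PL-condition (Assumption \ref{ass:PL}) on \emph{part} of the negative $\norms{\nabla f(x^k)}^2$ term to produce a genuine contraction $-\Theta(\gamma\mu)\expect{f(x^k) - f^*}$ in the suboptimality, keeping the rest of $-\norms{\nabla f(x^k)}^2$ in reserve. The second ingredient is the estimator-error recursion of Lemma \ref{lemma:h_vs_nablaf_nonstoch}: bounding $\norms{\nabla f(x^{k+1}) - \nabla f(x^k)}^2 \leq L^2\gamma^2 \norms{h^{k+1}}^2$ by smoothness and splitting $\norms{h^{k+1}}^2$ once more turns it into $H_{k+1} \leq (1 - c_3/d) H_k + (c_4/d)\expect{\norms{\nabla f(x^k)}^2} + (\text{noise})$, where the noise is governed by $\norms{\widetilde\nabla f_\delta(x^k) - \nabla f(x^k)}^2 \leq dL^2\tau^2 + 2d\Delta^2/\tau^2 =: \sigma^2$ (the full-approximation bound stated after Lemma \ref{lemma:h_vs_nablaf_nonstoch}).

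I would then set $\Psi_k = \expect{f(x^k) - f^*} + A\, H_k$ and tune $A = \Theta(\gamma d)$ so that two things happen at once: the reserved negative $\norms{\nabla f(x^k)}^2$ from the descent step cancels the positive $A\cdot(c_4/d)\norms{\nabla f(x^k)}^2$ coming from the error recursion, and the slow contraction $1 - c_3/d$ of $H_k$ outweighs the leakage $c_2\gamma H_{k+1}$ that the descent step feeds back into the error. This yields a clean one-step bound $\Psi_{k+1} \leq (1 - \rho)\Psi_k + B$ with $\rho = \Theta(\gamma\mu) = \Theta(\mu/(dL))$. Unrolling the geometric recursion and using $\expect{f(x^N) - f^*} \leq \Psi_N$ together with $(1 - \rho)^N \leq \exp(-\rho N)$ gives $\expect{f(x^N) - f^*} = \mathcal{O}\!\big(\Psi_0 \exp[-\mu N/(4dL)] + B/\rho\big)$, and identifying $B/\rho = \mathcal{O}(\sigma^2/\mu) = \mathcal{O}((dL^2\tau^2 + d\Delta^2/\tau^2)/\mu)$ and $\Psi_0 = \Theta(F_0)$ recovers the claim.

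The hard part will be pinning down the dependence on $d$ in the noise floor. A crude application of Lemma \ref{lemma:h_vs_nablaf_nonstoch} injects the \emph{full} error $\sigma^2 = \Theta(d(\cdots))$ into $H_k$ at every step, which together with the forced weight $A = \Theta(\gamma d)$ would inflate the floor by a spurious factor of $d$ and give $\mathcal{O}(d\sigma^2/\mu)$ instead of $\mathcal{O}(\sigma^2/\mu)$. The correct bound relies on the fact that \texttt{JAGUAR} (Algorithm \ref{alg:JAGUAR_nonstoch}) refreshes only one coordinate per iteration, so each step injects only a $1/d$-fraction of $\sigma^2$ and the estimator error stabilizes at $H_k = \mathcal{O}(\sigma^2)$ rather than $\mathcal{O}(d\sigma^2)$. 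Balancing this refined injection against the two competing demands on $A$ — large enough for the $H_k$-contraction to absorb the descent leakage, yet matched to the $\norms{\nabla f(x^k)}^2$ cancellation — is exactly what fixes the step size at $\gamma = 1/(4dL)$.
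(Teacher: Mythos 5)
Your high-level architecture coincides with the paper's: the paper also builds a Lyapunov function $\Phi_k = r_k + 2\gamma d\,\psi_k$ with $r_k = \E[f(x^k)-f^*]$, $\psi_k = \E[\norms{h^k-\nabla f(x^k)}^2]$, proves $\Phi_{k+1}\le(1-\mu\gamma)\Phi_k + 2\gamma d c$ with the per-coordinate injection $c = L^2\tau^2+2\Delta^2/\tau^2$, and unrolls; your last paragraph also correctly identifies that one must use the refined (appendix) form of the error recursion, Lemma \ref{lemma:h_vs_nablaf}, rather than the loose statement of Lemma \ref{lemma:h_vs_nablaf_nonstoch}, to avoid a spurious factor of $d$ in the floor. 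The genuine gap is in your coupling mechanism, and it is not a matter of sloppy constants: at the stated step size it makes the claimed rate unattainable.

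You discard the negative displacement term of the descent inequality and convert the displacement inside the \texttt{JAGUAR} recursion into gradient norm plus estimator error. Write $q_k = \E[\norms{\nabla f(x^k)}^2]$, $p_k = \E[\norms{x^{k+1}-x^k}^2]$, $\widetilde H_k = \E[\norms{h^{k+1}-\nabla f(x^k)}^2]$. The sharpest descent bound (Lemma 5 of \cite{li2021page}, which the paper invokes) is $r_{k+1}\le r_k - \frac{\gamma}{2}q_k - (\frac{1}{2\gamma}-\frac{L}{2})p_k + \frac{\gamma}{2}\widetilde H_k$; reserving a PL fraction $\theta\in(0,1]$ leaves $-\frac{(1-\theta)\gamma}{2}q_k$ available. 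Your conversion gives, at $\gamma = 1/(4dL)$, $2dL^2p_k \le 4dL^2\gamma^2(q_k + \widetilde H_k) = \frac{1}{4d}q_k + \frac{1}{4d}\widetilde H_k$, and the factor $2$ in this splitting is unavoidable because \texttt{JAGUAR} is biased, so the cross term $\E[\langle\nabla f(x^k),\,h^{k+1}-\nabla f(x^k)\rangle]$ does not vanish. Cancelling the $q_k$ injection then forces $A/(4d)\le(1-\theta)\gamma/2$, i.e.\ $A \le 2(1-\theta)\gamma d$, while absorbing the $H$-leakage ($\gamma/2$ from the descent plus $A/(4d)$ from the conversion, against the contraction $A/(2d)$, with $A\theta\gamma\mu$ set aside to match the $r_k$ contraction) forces $A \ge 2\gamma d/(1-\theta\mu/L)$. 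Since $(1-\theta)(1-\theta\mu/L) < 1$ for every $\theta>0$, the two demands are incompatible; keeping the $(1-1/d)$ refinements only admits $\theta = \mathcal{O}(1/d)$, i.e.\ a rate $\exp[-\mathcal{O}(\mu N/(d^2L))]$ and a noise floor inflated by $d$ — both a factor of $d$ worse than the theorem. Re-tuning the Young parameters (in the splitting, or inside the proof of Lemma \ref{lemma:h_vs_nablaf}) does not help: the optimal choice lands exactly on the degenerate boundary, so your claim that this balancing ``fixes $\gamma = 1/(4dL)$'' is precisely what fails. The paper closes the argument by keeping the negative $p_k$ term and cancelling the weighted injection $2AdL^2p_k = dL\,p_k$ (at $A=2\gamma d$) directly against $-(2dL-\frac{L}{2})p_k$, which holds with factor-two slack and leaves the entire $-\frac{\gamma}{2}q_k$ for the PL step. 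Your route can be salvaged by shrinking the step constant (e.g.\ $\gamma=1/(8dL)$ leaves room for $\theta=1/2$), but that proves a statement with a different exponent than the theorem, which fixes $\gamma = 1/(4dL)$ and claims $\exp[-\mu N/(4dL)]$.
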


        \begin{corollary}
        \label{cor:GD_PL}
            Under the conditions of Theorem \ref{theorem:GD_PL}, choosing $\gamma, \tau, \Delta$ as in Appendix \ref{appendix:corr_GD_full}
            in order to achieve an $\varepsilon$-approximate solution (in terms of $\E[f(x^N) - f^*] \leq \varepsilon$) it takes

            \begin{equation*}
                \mathcal{O} \left(\frac{d L }{\mu} \log\left[ \frac{F_0}{\varepsilon} \right] \right) \text{ iterations of Algorithm \ref{alg:GD}.}
            \end{equation*}
        \end{corollary}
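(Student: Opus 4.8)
The plan is to treat Corollary \ref{cor:GD_PL} as a direct tuning consequence of the three-term bound supplied by Theorem \ref{theorem:GD_PL}. With the step size already fixed at $\gamma \equiv 1/(4dL)$, that theorem guarantees
$$\expect{f(x^N) - f^*} = \mathcal{O}\left(F_0 \exp\left[-\frac{\mu N}{4dL}\right] + \frac{dL^2\tau^2}{\mu} + \frac{d\Delta^2}{\mu\tau^2}\right).$$
The strategy is to force each of the three summands below $\varepsilon/3$, so that their sum does not exceed $\varepsilon$, and then read off the constraints on $N$, $\tau$, and $\Delta$ that this imposes.

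First I would handle the exponentially decaying term, which is the only one depending on the iteration count $N$. Requiring $F_0\exp[-\mu N/(4dL)] \le \varepsilon/3$ and taking logarithms yields $N \ge (4dL/\mu)\log(3F_0/\varepsilon)$, that is, $N = \mathcal{O}\big((dL/\mu)\log[F_0/\varepsilon]\big)$, which is exactly the claimed iteration complexity. It is worth stressing that this term alone dictates the rate: the remaining two summands are constant in $N$ and are driven below tolerance by choosing the approximation parameters, not by iterating longer.

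Second I would tune the smoothing parameter $\tau$ against the finite-difference bias term. Setting $dL^2\tau^2/\mu \le \varepsilon/3$ gives $\tau = \mathcal{O}\big(L^{-1}\sqrt{\mu\varepsilon/d}\big)$. With this value of $\tau$ now fixed, I would choose the noise bound to control the last term: substituting the chosen $\tau$ into $d\Delta^2/(\mu\tau^2) \le \varepsilon/3$ produces $\Delta = \mathcal{O}\big(\mu\varepsilon/(dL)\big)$.

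The argument contains no genuine obstacle; the only point requiring care is the ordering of the choices. Because $\tau$ sits in the denominator of the noise term while appearing in the numerator of the bias term, one must fix $\tau$ first from the bias constraint and only afterwards pick $\Delta$ small enough relative to that value, since choosing them independently would risk a circular constraint. As the authors remark following Corollary \ref{cor:FW_nonstoch}, writing $\Delta(\varepsilon)$ is merely a notational convenience: when $\Delta$ is instead fixed by the oracle, the same relations are read backwards as determining the attainable accuracy $\varepsilon = \varepsilon(\Delta)$, with $\tau$ and $N$ then expressed through $\Delta$. Since only the $N$-dependent summand carries the iteration count, the final complexity $\mathcal{O}\big((dL/\mu)\log[F_0/\varepsilon]\big)$ follows regardless of which reparametrization is adopted.
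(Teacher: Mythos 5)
Your proposal is correct and follows essentially the same route as the paper: both start from the bound in Theorem \ref{theorem:GD_PL}, make the exponential term small by choosing $N = \mathcal{O}\bigl((dL/\mu)\log[F_0/\varepsilon]\bigr)$, and then set $\tau = \mathcal{O}\bigl(\sqrt{\varepsilon\mu}/(\sqrt{d}L)\bigr)$ and $\Delta = \mathcal{O}\bigl(\varepsilon\mu/(dL)\bigr)$ to suppress the bias and noise terms, exactly the parameter choices given in Appendix \ref{appendix:corr_GD_full}. Your added remarks on the ordering of the choices and the $\varepsilon(\Delta)$ reparametrization are consistent with the paper's discussion and introduce no discrepancy.
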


        For a detailed proof of Theorems \ref{theorem:GD}, \ref{theorem:GD_PL} and Corollaries \ref{cor:GD}, \ref{cor:GD_PL}, see Appendix \ref{appendix:GD}. 
        
        \textbf{Discussion.} The results of Theorems \ref{theorem:GD} and \ref{theorem:GD_PL} match with first-order results, but in such results $\gamma \equiv 1/L$, therefore, in the zero-order case the factor $d$ reappears in theoretical estimates. We also obtain smoothing term of the form $\mathcal{O}\left(\text{poly}(\tau) + \text{poly}(\Delta / \tau) \right)$. The same estimates arise in previous works when studying Gradient Descent in the zero-order oracle setting \cite{bogolubsky2016learning, bayandina2018gradient, dvinskikh2022noisy}.

    \subsection{Frank-Wolfe via JAGUAR. Stochastic case}
    \label{sect:JAGUAR_stoch}

        In this section, we consider the stochastic version of the problem \eqref{eq:problem_nonstoch}:
  
        \begin{equation}
        \label{eq:problem_stoch}
            f(x) := 
            \mathbb{E}_{\xi \sim \pi}[f(x, \xi)],
        \end{equation}
        where $\xi$ is a random vector from a mostly unknown distribution $\pi$. 
        For this problem we can not use the values of the function $f(x)$ in the difference schemes, since only $f(x, \xi)$ is available. Again, we assume that we do not have access to the true value of the gradient $\nabla f(x, \xi)$, and zero-order oracle returns the noisy value of the function $f(x, \xi)$: $f_{\delta}(x, \xi) := f(x, \xi) + \delta(x, \xi).$

        In the stochastic setup \eqref{eq:problem_stoch}, there are two versions of the differences of the scheme \eqref{eq:turtle_approx} appear. The first one is called Two Point Feedback (TPF) \cite{duchi2015optimal, shamir2017optimal, doi:10.1137/19M1259225, beznosikov2020gradient, gasnikov2022power}. In this case, we define such gradient approximations of the function $f(x)$:

        \begin{equation}\label{eq:tpf}
            \widetilde{\nabla}_if_\delta(x, \xi) :=  \dfrac{f_\delta(x + \tau e_i, \xi) - f_\delta(x - \tau e_i, \xi)}{2 \tau} e_i .
        \end{equation}

        The second one is called One Point Feedback (OPF) \cite{nemirovskij1983problem, flaxman2004online, gasnikov2017stochastic, akhavan2020exploiting, beznosikov2021one}. In this case, we define a slightly different gradient approximation of the function $f(x)$:

        \begin{equation}\label{eq:opf}
            \widetilde{\nabla}_if_\delta(x, \xi^{\pm}) :=  \dfrac{f_\delta(x + \tau e_i, \xi^+) - f_\delta(x - \tau e_i, \xi^-)}{2 \tau} e_i .
        \end{equation}

        The main difference between the approximations \eqref{eq:tpf} and \eqref{eq:opf} is that the scheme \eqref{eq:tpf} is more accurate, but  difficult to implement in practice, because we have to get the same realization of $\xi$ at two different points $x + \tau e$ and $x - \tau e$, hence the scheme \eqref{eq:opf} is more interesting from a practical point of view. To further simplify, we consider that in the case of two point feedback \eqref{eq:tpf} we have the same inscription as for the one point feedback \eqref{eq:opf}, but only $\xi^+ = \xi^- = \xi$.
        We provide several assumptions required for the analysis.
    
        \begin{assumption}[Smoothness]\label{ass:smooth}
            The functions $f(x, \xi)$ are $L(\xi)$-smooth on the set $Q$, i.e. 
            $
                \forall x, y \in Q \hookrightarrow \|\nabla f(x, \xi) - \nabla f(y, \xi)\| \leq L(\xi) \|x-y\|.
            $
            We also assume that exists constant $L$ such that 
            $
                L^2 := \E[L(\xi)^2].
            $
            Then function $f(x)$ is $L$-smooth on the set $Q$.
        \end{assumption}
%

    
        \begin{assumption}[Bounded oracle noise]\label{ass:bounded}
            The noise in the oracle is bounded by some constant $\Delta > 0$, i.e.
            $
                \exists~ \Delta > 0 : ~\forall x \in Q \hookrightarrow \E[|\delta(x, \xi)|^2] \leq \Delta^2 .
            $
            If we define $\delta(x) := \E[\delta(x, \xi)]$, then it holds that $|\delta(x)|^2 \leq \Delta^2$.
        \end{assumption}
%
%

        Now we present two assumptions that are only needed in the stochastic case.

        \begin{assumption}[Bounded variance of the gradient] \label{ass:sigma_nabla}
            The variance of the $\nabla f(x, \xi)$ is bounded, i.e.
            $$
                \exists~ \sigma_{\nabla} \geq 0 : ~\forall x \in Q \hookrightarrow \E[\|\nabla f(x, \xi) - \nabla f(x)\|^2] \leq \sigma^2_{\nabla} .
            $$
        \end{assumption}
    
        \begin{assumption}[Bounded variance of the function] \label{ass:sigma_f}
            The variance of the $f(x, \xi)$ is bounded, i.e.
            $$
                \exists~ \sigma_{f} \geq 0 : ~\forall x \in Q \hookrightarrow \E[\|f(x, \xi) - f(x) \|^2] \leq \sigma^2_{f} .
            $$
        \end{assumption}
    
        Assumptions \ref{ass:sigma_nabla} and \ref{ass:sigma_f} are classical in the stochastic optimization literature \cite{agarwal2011stochastic, bach2016highly, akhavan2020exploiting, dvurechensky2021accelerated}. In the case of two point feedback \eqref{eq:tpf}, we do not need Assumption \ref{ass:sigma_f}, therefore, for the sake of simplicity we will assume that in this case Assumption \ref{ass:sigma_f} is satisfied with $\sigma_f = 0$. Now we can present the Frank-Wolfe algorithm, witch solves the problem \eqref{eq:problem_nonstoch} + \eqref{eq:problem_stoch} using \texttt{JAGUAR} gradient approximation.
        \begin{algorithm}[H]
    	\caption{\texttt{FW via JAGUAR}. Stochastic case}
    	\label{alg:FW_stoch}
        	\begin{algorithmic}[1]
        		\State {\bf Input:} $\gamma_k$, $\eta_k$, $\tau$, $x^0 \in Q$, $h^0 = g^0 = 
                1/(2 \tau) \sum_{i=1}^d (f_{\delta}(x^0 + \tau e_i, \xi_i^+) - f_{\delta}(x^0 - \tau e_i, \xi_i^-)) e_i$
                \For {$k = 0, 1, 2, ... , N$}
                    \State Sample $i_k \in \overline{1, d}$ independently and uniform
                    \State Sample $\xi^+_k, \xi^-_k \sim \pi$ independently (in TPF $\xi_k^+= \xi_k^-$)
    
                    \State Compute
                    $\widetilde{\nabla}_{i_k} f_{\delta}(x^k, \xi^\pm_k) = 1/(2 \tau) (f_{\delta}(x^k + \tau e_{i_k}, \xi^+_k) - f_{\delta}(x^k - \tau e_{i_k}, \xi^-_k)) e_{i_k}$
                    \State $h^{k+1} = h^k - \dotprod{h^k}{e_{i_k}} e_{i_k} + \widetilde{\nabla}_{i_k} f_{\delta}(x^k, \xi^+_k, \xi^-_k)$ \label{line:h^k}
                    \State $\rho^{k} = h^{k} - d \cdot \dotprod{h^{k}}{e_{i_k}} e_{i_k} + d \cdot \widetilde{\nabla}_{i_k} f_{\delta}(x^k, \xi^+_k, \xi^-_k)$ \label{line:rho^k}
                    \State $g^{k+1} = (1 - \eta_k) g^k + \eta_k \rho^k$ \label{line:g^k}
                    \State $s^k = \underset{s \in Q}{\arg\min}\left<s, g^{k+1} \right>$ \label{line:s^k_stoch}
                    \State $x^{k+1} = x^k + \gamma_k (s^k - x^k)$ \label{line:x^k_stoch}
                \EndFor
        	\end{algorithmic}
        \end{algorithm}
        
        Algorithm \ref{alg:FW_stoch} is similar to \texttt{FW via JAGUAR} in the deterministic case (Algorithm \ref{alg:FW}), but in lines \ref{line:rho^k} and \ref{line:g^k} we use SEGA \cite{hanzely2018sega} and  momentum \cite{mokhtari2020stochastic} parts to converge in the stochastic case. 

        $\bullet$ We need the SEGA part \cite{hanzely2018sega} $\rho_k$ in Algorithm \ref{alg:FW_stoch}, because in the stochastic case we care about the "unbiased" property (see proof of Lemma \ref{lemma:g_vs_nabla_f} in Appendix \ref{appendix:JAGUAR}), i.e.

        $$\mathbb{E}_{k}[\rho^k] = \widetilde{\nabla} f_\delta(x^k) := \sum\limits_{i = 1}^{d} \frac{f_\delta (x + \tau e_i) - f_\delta(x - \tau e_i)}{2 \tau} e_i,$$
        where $\mathbb{E}_{k}[\cdot]$ is a conditional mathematical expectation at step $k$. Using the SEGA part $\rho^k$ degrades our estimates by a factor of $d$ compared to using $h^k$ as a gradient approximation (see Lemmas \ref{lemma:h_vs_nablaf} and \ref{lemma:rho_vs_nablaf} in Appendix \ref{appendix:JAGUAR}), but we have to accept this factor.

        $\bullet$
        We need the momentum part \cite{mokhtari2020stochastic} $\eta_k$ in Algorithm \ref{alg:FW_stoch}, because when evaluating the expression of $\E[\|\widetilde{\nabla}f_{\delta}(x, \xi_{\overline{1, d}}^\pm) - \nabla f(x)\|^2]$ in the stochastic case, terms containing $\sigma_\nabla^2$ appear and $\sigma_f^2$ and interfere with convergence (see Lemma \ref{lemma:tilde_vs_notilda} in Appendix \ref{appendix:JAGUAR}). This is a common problem in the stochastic Frank-Wolfe-based methods (see \cite{mokhtari2020stochastic}). We can provide a theorem similar to the one in Section \ref{sect:FW_via_JAGUAR} (Theorem \ref{theorem:JAGUAR_nonstoch}), where we carefully choose the step sizes $\gamma_k$ and $\eta_k$. 
    
        \begin{theorem}[Step tuning for \texttt{FW via JAGUAR}. Stochastic case]
        \label{theorem:JAGUAR}
            Consider Assumptions \ref{ass:compact}, \ref{ass:smooth}, \ref{ass:bounded}, \ref{ass:sigma_nabla} and \ref{ass:sigma_f} in the one point feedback case. For $x^k$ generated by Algorithm \ref{alg:FW_stoch}, we can take $\gamma_k = 4/(k + 8d^{3/2}) ~~\text{ and }~~ \eta_k = 4/(k + 8d^{3/2})^{2/3},$
            then, the following inequality holds:
            
            \begin{equation*}
                G_k
                =
                \mathcal{O} \Big(\norm{\widetilde{\nabla}f_{\delta}(x^k) - \nabla f(x^k)}^2
                +\frac{L^2 D^2 + d^2 \sigma_f^2/ \tau^2 + d^2 \sigma_{\nabla}^2}{(k + 8d^{3/2})^{2/3}} 
                \Big),
            \end{equation*}
            where we use the notation $G_k := \E[\|g^k - \nabla f(x^k)\|^2]$. In the two point feedback case, $\sigma^2_f = 0$.
        \end{theorem}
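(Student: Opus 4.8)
The plan is to track two coupled error sequences—$H_k := \E[\|h^k - \nabla f(x^k)\|^2]$ for the \texttt{JAGUAR} memory and $G_k := \E[\|g^k - \nabla f(x^k)\|^2]$ for the momentum estimator—and to unroll their recursions against the prescribed step sizes. First I would derive a one-step inequality for $G_k$. Writing $g^{k+1} - \nabla f(x^{k+1})$ as the sum of the convex-combination term $(1-\eta_k)(g^k - \nabla f(x^k)) + \eta_k(\widetilde{\nabla}f_{\delta}(x^k) - \nabla f(x^k))$, the drift $\nabla f(x^k) - \nabla f(x^{k+1})$, and the conditionally zero-mean noise $\eta_k(\rho^k - \widetilde{\nabla}f_{\delta}(x^k))$, I would use the unbiasedness $\E_k[\rho^k] = \widetilde{\nabla}f_{\delta}(x^k)$ of the SEGA correction in line \ref{line:rho^k} to kill the cross terms with the noise, bound the drift deterministically via $\|\nabla f(x^{k+1}) - \nabla f(x^k)\| \leq L\gamma_k\|s^k - x^k\| \leq L\gamma_k D$ (Assumptions \ref{ass:compact} and \ref{ass:smooth}), and apply Young's inequality with parameter $\eta_k/2$ together with convexity of $\|\cdot\|^2$ on the convex combination. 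This yields a recursion of the form
\begin{equation*}
G_{k+1} \leq \Big(1 - \tfrac{\eta_k}{2}\Big)G_k + 2\eta_k B_k + 2\eta_k^2 R_k + \tfrac{3L^2 D^2 \gamma_k^2}{\eta_k},
\end{equation*}
where $B_k := \|\widetilde{\nabla}f_{\delta}(x^k) - \nabla f(x^k)\|^2$ is the finite-difference bias and $R_k := \E_k[\|\rho^k - \widetilde{\nabla}f_{\delta}(x^k)\|^2]$ is the SEGA variance.

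Next I would bound $R_k$. Using the algebraic identity $\rho^k - \widetilde{\nabla}f_{\delta}(x^k) = (h^k - \widetilde{\nabla}f_{\delta}(x^k)) + d\big(\widetilde{\nabla}_{i_k}f_{\delta}(x^k,\xi_k^\pm) - \langle h^k, e_{i_k}\rangle e_{i_k}\big)$ and computing the conditional second moment over the uniform index $i_k$ and the independent samples $\xi_k^\pm$, the cross term contributes $-2\|h^k - \widetilde{\nabla}f_{\delta}(x^k)\|^2$ while the coordinate-sampling term contributes $d$ times the summed per-coordinate error. This gives $R_k \leq d\,\|h^k - \widetilde{\nabla}f_{\delta}(x^k)\|^2 + d\Sigma^2$, where $\Sigma^2$ is the total per-coordinate stochastic variance of the one-point-feedback differences; a short computation bounds $\Sigma^2 = \mathcal{O}(d\sigma_f^2/\tau^2 + d\sigma_\nabla^2)$, so after taking expectations $R_k = \mathcal{O}(d H_k + d B_k + d^2\sigma_f^2/\tau^2 + d^2\sigma_\nabla^2)$. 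This is the content of Lemma \ref{lemma:rho_vs_nablaf}, and the explicit factor $d$ multiplying $H_k$ and $B_k$ is exactly the degradation of the SEGA correction noted after Algorithm \ref{alg:FW_stoch}.

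I would then close the loop using the stochastic \texttt{JAGUAR} recursion (Lemma \ref{lemma:h_vs_nablaf}), which contracts $H_k$ at the slow rate $(1 - \tfrac{1}{2d})$ with forcing $\mathcal{O}(d L^2 \gamma_k^2 D^2) + B_k + \text{(variance)}$; unrolling it gives the stationary-level estimate $H_k = \mathcal{O}(d^2 L^2 \gamma_k^2 D^2 + d B_k + \ldots)$. Substituting into the $G_k$ recursion and applying the standard sequence lemma ``$a_{k+1} \leq (1-\tfrac{\eta_k}{2})a_k + b_k \Rightarrow a_k = \mathcal{O}(b_k/\eta_k)$'' term by term, each forcing collapses at the claimed rate: the bias $2\eta_k B_k$ contributes $\mathcal{O}(B_k)$; the pure-variance part of $2\eta_k^2 R_k$ contributes $\mathcal{O}\big((d^2\sigma_f^2/\tau^2 + d^2\sigma_\nabla^2)/(k+8d^{3/2})^{2/3}\big)$; and the drift $3L^2 D^2 \gamma_k^2/\eta_k$ contributes $\mathcal{O}\big(L^2 D^2/(k+8d^{3/2})^{2/3}\big)$, using $\gamma_k^2/\eta_k^2 = (k+8d^{3/2})^{-2/3}$.

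The hard part will be controlling the coupling terms $\eta_k d H_k$, which carry extra powers of $d$ and must be shown to stay strictly below the target rate $(k+8d^{3/2})^{-2/3}$. This is precisely where the offset $8d^{3/2}$ is essential: it is chosen so that $\eta_0 = 4/(8d^{3/2})^{2/3} = 1/d$ matches the $\Theta(1/d)$ contraction rate of the memory, which forces $\eta_k d \leq 1$ throughout and makes every $H_k$-coupling term (scaling like $\eta_k d^3 \gamma_k^2 \sim d^3(k+8d^{3/2})^{-8/3}$) subdominant, since $(k+8d^{3/2})^2 \geq 64 d^3$ turns it into at most $\tfrac{1}{64}(k+8d^{3/2})^{-2/3}$. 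Finally, the two-point-feedback case follows by setting $\sigma_f^2 = 0$, which removes the $d^2\sigma_f^2/\tau^2$ term because the variance of the shared-sample difference no longer blows up as $\tau \to 0$.
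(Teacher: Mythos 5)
Your overall architecture is the same as the paper's: a one-step momentum recursion for $G_k$ (the paper's Lemma \ref{lemma:g_vs_nabla_f}), a variance bound for the SEGA correction $\rho^k$ (Lemma \ref{lemma:rho_vs_nablaf}), the unrolled memory recursion for $H_k$ (Lemma \ref{lemma:h_vs_nablaf}), and a final pass through the recursion lemma (Lemma \ref{lem:recursion}); your identification of the role of the offset $8d^{3/2}$ (so that $\eta_0 = 1/d$ and $(8d^{3/2})^2 = 64d^3$ absorbs the $d$-heavy coupling terms) is exactly the absorption the paper performs. Your computation of $R_k$ is also correct, including the $-2\|h^k-\widetilde{\nabla}f_{\delta}(x^k)\|^2$ cross term and the factor $d$ on the summed per-coordinate error. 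One small caution: in your indexing the drift $\nabla f(x^{k+1})-\nabla f(x^k)$ is \emph{not} measurable with respect to the pre-noise filtration ($x^{k+1}$ depends on $\rho^k$ through $g^{k+1}$), so that particular cross term cannot be killed by conditioning; your deterministic bound $L\gamma_k D$ plus Young's inequality does cover it, but you should not claim unbiasedness kills all cross terms with the noise.

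The genuine flaw is a factor-$d$ error where you invoke the memory recursion. The forcing of Lemma \ref{lemma:h_vs_nablaf} is \emph{not} $\mathcal{O}(dL^2\gamma_k^2D^2) + B_k + (\text{variance})$: since only the single refreshed coordinate injects fresh error at each step, the injected term is the per-coordinate quantity $\frac{1}{d}\E[\|\widetilde{\nabla}f_{\delta}(x^k,\xi^{\pm}_{\overline{1,d}})-\nabla f(x^k)\|^2] \leq L^2\tau^2 + 8\sigma_f^2/\tau^2 + 2\sigma_{\nabla}^2 + 2\Delta^2/\tau^2$, i.e. of size $B_k/d$, not $B_k$. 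This $1/d$ exactly cancels the factor $2d$ amplification produced by the slow $(1-\tfrac{1}{2d})$ contraction; that cancellation is the entire point of \texttt{JAGUAR}, and it is why the memory equilibrates at the full-approximation error, $H_k = \mathcal{O}\big(d^2L^2\gamma_k^2D^2 + dL^2\tau^2 + d\sigma_f^2/\tau^2 + d\sigma_{\nabla}^2 + d\Delta^2/\tau^2\big)$, rather than at $d$ times it. With your stated forcing you instead get $H_k = \mathcal{O}(\cdots + dB_k)$, hence $R_k = \mathcal{O}(\cdots + d^2B_k)$, and after multiplying by $\eta_k \leq 1/d$ the bias contribution to $G_k$ is only $\mathcal{O}(dB_k) = \mathcal{O}(d^2L^2\tau^2 + d^2\Delta^2/\tau^2)$ --- a factor $d$ worse than the claimed $\mathcal{O}(\|\widetilde{\nabla}f_{\delta}(x^k)-\nabla f(x^k)\|^2)$, so the theorem as stated does not follow (and the smoothing terms in Theorem \ref{theorem:FW} would inherit an extra $\sqrt{d}$). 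The fix is local: state the forcing of Lemma \ref{lemma:h_vs_nablaf} with the $1/d$ factor, after which the rest of your pipeline delivers the theorem exactly as in the paper.
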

        
        We get worse estimates compared to the deterministic case in Theorem \ref{theorem:JAGUAR_nonstoch} because we consider a more complicated setup. We now examine the convergence of Algorithm \ref{alg:FW_stoch}.

        \begin{theorem}[Convergence rate of \texttt{FW via JAGUAR} (Algorithm \ref{alg:FW_stoch}). Stochastic case]
        \label{theorem:FW}
            Consider Assumptions \ref{ass:compact}, \ref{ass:conv},  \ref{ass:smooth}, \ref{ass:bounded}, \ref{ass:sigma_nabla} and \ref{ass:sigma_f} in the one point feedback case. We can take  $\gamma_k = 4/(k + 8d^{3/2}) ~~\text{ and }~~ \eta_k = 4/(k + 8d^{3/2})^{2/3},$
            then we \texttt{FW via JAGUAR} (Algorithm \ref{alg:FW_stoch}) has the following convergence rate
            
            \begin{equation*}
            \begin{split}
                F_N
                =
                \mathcal{O} \Big( &\frac{L D^2 + d \sigma_f D/ \tau + d\sigma_{\nabla} D + \sqrt{d} F_0}{(N + 8d^{3/2})^{1/3}}
                + \sqrt{d} L D \tau + \frac{\sqrt{d} \Delta D}{\tau}\Big),
            \end{split}
            \end{equation*}
            where we use the notation $F_N := \E[f(x^{N}) - f^*]$. In the two point feedback case, $\sigma^2_f = 0$.
        \end{theorem}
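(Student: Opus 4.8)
The plan is to combine the $L$-smoothness of $f$ with the optimality of the Frank--Wolfe direction $s^k$ and with convexity to obtain a one-step contraction for $F_k=\E[f(x^k)-f^*]$, and then to feed in the gradient-estimate quality guaranteed by Theorem \ref{theorem:JAGUAR}. This is the classical stochastic Frank--Wolfe-with-momentum scheme: since Theorem \ref{theorem:JAGUAR} already packages the momentum/SEGA analysis into a bound on $G_k$, the remaining work is the Frank--Wolfe descent plus solving a scalar recursion.

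First I would write the descent step. Since $x^{k+1}=x^k+\gamma_k(s^k-x^k)$ and $\|s^k-x^k\|\le D$ by Assumption \ref{ass:compact}, Assumption \ref{ass:smooth} gives $f(x^{k+1})\le f(x^k)+\gamma_k\langle\nabla f(x^k),s^k-x^k\rangle+\tfrac{L\gamma_k^2 D^2}{2}$. Writing $\langle\nabla f(x^k),s^k-x^k\rangle=\langle g^{k+1},s^k-x^k\rangle+\langle\nabla f(x^k)-g^{k+1},s^k-x^k\rangle$, I would use that $s^k$ minimizes $\langle\,\cdot\,,g^{k+1}\rangle$ over $Q$ and $x^*\in Q$ to replace $s^k$ by $x^*$ in the first term, then convexity (Assumption \ref{ass:conv}) to bound $\langle\nabla f(x^k),x^*-x^k\rangle\le f^*-f(x^k)$, collecting the remainder into $\langle g^{k+1}-\nabla f(x^k),x^*-s^k\rangle$. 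After subtracting $f^*$, taking expectation, and applying Cauchy--Schwarz with $\|x^*-s^k\|\le D$, this yields
\[
F_{k+1}\le(1-\gamma_k)F_k+\gamma_k D\,\E\big[\|g^{k+1}-\nabla f(x^k)\|\big]+\tfrac{L\gamma_k^2 D^2}{2}.
\]
Next I would control the error term. By Jensen, $\E[\|g^{k+1}-\nabla f(x^k)\|]\le\sqrt{G}$, and Theorem \ref{theorem:JAGUAR} together with $\|\widetilde\nabla f_\delta(x)-\nabla f(x)\|^2\le dL^2\tau^2+2d\Delta^2/\tau^2$ (Lemma \ref{lemma:tilde_vs_notilda}) gives, via $\sqrt{a+b}\le\sqrt a+\sqrt b$,
\[
\E[\|g^{k+1}-\nabla f(x^k)\|]=\mathcal{O}\Big(\underbrace{\sqrt d\,L\tau+\tfrac{\sqrt d\,\Delta}{\tau}}_{=:A}\;+\;\underbrace{\tfrac{LD+d\sigma_f/\tau+d\sigma_\nabla}{(k+8d^{3/2})^{1/3}}}_{=:B_k}\Big).
\]
The crucial point here is that the $(k+8d^{3/2})^{-2/3}$ rate of $G$ becomes a $(k+8d^{3/2})^{-1/3}$ rate after the square root, which is the source of the final $1/3$ exponent, while the $\tau$- and $\Delta$-terms form the irreducible constant bias $A$.

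Finally I would unroll $F_{k+1}\le(1-\gamma_k)F_k+c_k$ with $c_k=\gamma_k D(A+B_{k+1})+\tfrac{L\gamma_k^2 D^2}{2}$ using the closed form $F_N\le\big(\prod_{j=0}^{N-1}(1-\gamma_j)\big)F_0+\sum_{k=0}^{N-1}\big(\prod_{j=k+1}^{N-1}(1-\gamma_j)\big)c_k$. With $\gamma_k=4/(k+8d^{3/2})$ one has $\prod_{j=k+1}^{N-1}(1-\gamma_j)=\mathcal{O}\big((k+8d^{3/2})^4/(N+8d^{3/2})^4\big)$, and I would estimate three contributions separately: the homogeneous term gives $\mathcal{O}\big(\sqrt d\,F_0/(N+8d^{3/2})^{1/3}\big)$ because $(8d^{3/2})^4\le 2\sqrt d\,(N+8d^{3/2})^{11/3}$ for all $N\ge 0$; the constant-bias term $\sum_k(\prod_{j>k}(1-\gamma_j))\gamma_k D A$ telescopes to $\mathcal{O}(DA)=\mathcal{O}(\sqrt d\,LD\tau+\sqrt d\,\Delta D/\tau)$; and the decaying inputs $\gamma_k D B_{k+1}$ and $L\gamma_k^2 D^2/2$ integrate to $\mathcal{O}\big((LD^2+d\sigma_f D/\tau+d\sigma_\nabla D)/(N+8d^{3/2})^{1/3}\big)$. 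Summing the three pieces reproduces the claimed bound, with the two-point-feedback case recovered by $\sigma_f=0$.

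The main obstacle is the recursion-solving step: one must verify that the non-decaying bias $\gamma_k D A$ accumulates to a constant of order $DA$ rather than vanishing, whereas the genuinely decaying terms of order $(k+\beta)^{-4/3}$ and $(k+\beta)^{-2}$ integrate against the weights to the common rate $(N+\beta)^{-1/3}$; this requires carefully tracking powers of $d$ through the weighted sums $\sum_k(k+\beta)^{p}\approx(N+\beta)^{p+1}/(p+1)$ and checking the binding (small-$N$) case of the homogeneous factor, which is precisely what produces $\beta=8d^{3/2}$ and the $\sqrt d$ in front of $F_0$. A secondary technical point is the index alignment between $g^{k+1}$ and $\nabla f(x^k)$, which must be matched to the quantity controlled by Theorem \ref{theorem:JAGUAR}.
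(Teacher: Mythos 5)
Your proof is correct and follows the same skeleton as the paper's: a one-step Frank--Wolfe descent inequality, Jensen's inequality plus Theorem \ref{theorem:JAGUAR} (with Lemma \ref{lemma:tilde_vs_notilda}) to control $\E\left[\norms{g^{k+1}-\nabla f(x^k)}\right]$, and then a scalar recursion. The two places where you diverge are in execution rather than in ideas. First, the paper simply cites Lemma 2 of Mokhtari et al.\ for the inequality $F_{k+1}\le(1-\gamma_k)F_k+\gamma_k D\,\E\left[\norms{g^{k+1}-\nabla f(x^k)}\right]+\frac{L\gamma_k^2D^2}{2}$, whereas you re-derive it from smoothness, the linear-minimization optimality of $s^k$, convexity, and Cauchy--Schwarz; your derivation is the standard one and is sound. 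Second, and more substantively, the paper solves the resulting recursion by invoking its general induction-based Recursion Lemma (Lemma \ref{lem:recursion}) with $\alpha_0=1$, $\beta_0=4$, $k_0=8d^{3/2}$, input exponents $(2,\,4/3,\,1)$ and $i^*=2$, while you unroll the recursion explicitly via $\prod_{j>k}(1-\gamma_j)=\mathcal{O}\bigl((k+k_0)^4/(N+k_0)^4\bigr)$ and estimate the three weighted sums by hand. Both give the same answer; your route is more elementary and makes visible exactly where each term originates --- the constant bias $\mathcal{O}(\sqrt{d}LD\tau+\sqrt{d}\Delta D/\tau)$ from the non-decaying input, the $(N+8d^{3/2})^{-1/3}$ rate from integrating the $(k+k_0)^{-4/3}$ input against the weights, and the $\sqrt{d}F_0$ term from the small-$N$ regime of the homogeneous factor (your check $(8d^{3/2})^{1/3}=2\sqrt{d}$ is exactly the paper's $Q_{i^*}=\max\{\beta_{i^*}/\beta_0,\ r_0 k_0^{\alpha_{i^*}-\alpha_0}\}$ with $r_0 k_0^{1/3}=2\sqrt{d}F_0$) --- whereas the paper's lemma is reusable across all of its theorems and keeps each individual proof short. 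A final point in your favor: you explicitly flag the index alignment ($s^k$ is computed from $g^{k+1}$, so the error that enters the descent inequality is $\norms{g^{k+1}-\nabla f(x^k)}$), a shift that the paper's own proof silently absorbs into the notation of Lemma \ref{lemma:g_vs_nabla_f} and Theorem \ref{theorem:JAGUAR}; since it is only a relabeling, it does not affect the bound, but it is right to track it.
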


        \begin{corollary}
        \label{cor:FW}
            Under the conditions of Theorem \ref{theorem:FW}, choosing $\gamma_k, \eta_k, \tau, \Delta$ as in Appendix \ref{appendix:corr_FW_stoch_full} 
            in order to achieve an $\varepsilon$-approximate solution (in terms of $\E[f(x^N) - f^*] \leq \varepsilon)$ it takes
            
            \begin{equation*}
            \begin{split}
                \mathcal{O} \Bigg( \max\Bigg\{ \left[ \frac{L D^2 + d\sigma_{\nabla} D + \sqrt{d} (f(x^0) - f^*)}{\varepsilon}\right]^3; 
                \frac{d^{9/2} \sigma_f^3 L^3D^6}{\varepsilon^6} \Bigg\}\Bigg) 
                \text{ iterations of Algorithm \ref{alg:FW_stoch}.}
            \end{split}
            \end{equation*}
            In the two point feedback case, $\sigma_f^2 = 0$ and the last equation takes form
            
            \begin{equation*}
            \begin{split}
                &\mathcal{O} \left( \left[ \frac{L D^2 + d\sigma_{\nabla} D + \sqrt{d} (f(x^0) - f^*)}{\varepsilon}\right]^3 \right) 
                \text{ iterations of Algorithm \ref{alg:FW_stoch}.}
            \end{split}
            \end{equation*}
        \end{corollary}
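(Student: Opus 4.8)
The plan is to start from the three-term convergence bound in Theorem~\ref{theorem:FW} and tune the two remaining free parameters $\tau$ and $\Delta$ (the step sizes $\gamma_k,\eta_k$ are already fixed there) so that each of the three contributions is at most of order $\varepsilon$; the required number of iterations $N$ then drops out of the optimization term. Writing the bound as $F_N = \mathcal{O}(T_1 + T_2 + T_3)$ with
\[
T_1 = \frac{LD^2 + d\sigma_f D/\tau + d\sigma_{\nabla} D + \sqrt{d}F_0}{(N+8d^{3/2})^{1/3}}, \quad T_2 = \sqrt{d}LD\tau, \quad T_3 = \frac{\sqrt{d}\Delta D}{\tau},
\]
it suffices to force $T_1, T_2, T_3 = \mathcal{O}(\varepsilon)$ and combine the resulting constraints, recalling that $F_0 = f(x^0) - f^*$.

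First I would control the deterministic smoothing bias $T_2$: imposing $\sqrt{d}LD\tau \lesssim \varepsilon$ fixes the smoothing radius at $\tau = \Theta\big(\varepsilon/(\sqrt{d}LD)\big)$. With this $\tau$ in hand, the noise requirement $T_3 \lesssim \varepsilon$ becomes $\Delta \lesssim \varepsilon\tau/(\sqrt{d}D)$, that is $\Delta(\varepsilon) = \Theta\big(\varepsilon^2/(dLD^2)\big)$ — exactly the dependence deferred to the appendix and, as the discussion after Corollary~\ref{cor:FW_nonstoch} notes, equivalently readable as $\varepsilon(\Delta)$. The only genuine work then lies in the optimization term: requiring $T_1 \lesssim \varepsilon$ gives
\[
N + 8d^{3/2} = \mathcal{O}\Big(\Big[\tfrac{LD^2 + d\sigma_f D/\tau + d\sigma_{\nabla} D + \sqrt{d}F_0}{\varepsilon}\Big]^3\Big).
\]

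Substituting $\tau = \Theta\big(\varepsilon/(\sqrt{d}LD)\big)$ turns the one-point-feedback term $d\sigma_f D/\tau$ into $\Theta\big(d^{3/2}\sigma_f LD^2/\varepsilon\big)$, after which I would split the cube using $(a+b)^3 \le 4(a^3+b^3)$ for nonnegative $a,b$, taking $a = LD^2 + d\sigma_{\nabla} D + \sqrt{d}F_0$ and $b = d^{3/2}\sigma_f LD^2/\varepsilon$. The $a$-part reproduces the first entry of the $\max$, while the $b$-part yields $\big[d^{3/2}\sigma_f LD^2/\varepsilon^2\big]^3 = d^{9/2}\sigma_f^3 L^3 D^6/\varepsilon^6$; converting $a^3 + b^3 = \Theta(\max\{a^3, b^3\})$ gives precisely the stated bound, and setting $\sigma_f = 0$ collapses it to the two-point-feedback expression. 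The step I expect to be the crux is exactly this last substitution: because $\tau$ is forced down to order $\varepsilon$ in order to kill the bias $T_2$, the one-point-feedback variance contribution $d\sigma_f D/\tau$ inside $T_1$ inflates like $1/\varepsilon$, so after cubing it produces an $\varepsilon^{-6}$ rather than an $\varepsilon^{-3}$ rate. Tracking this coupling between the bias-driven choice of $\tau$ and the $\sigma_f$ term is the entire substance of the corollary; the rest is routine bookkeeping with $\mathcal{O}(\cdot)$ and the elementary cube inequality.
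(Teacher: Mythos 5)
Your proposal is correct and takes essentially the same route as the paper: the paper's proof likewise fixes $\tau = \mathcal{O}\left(\varepsilon/(\sqrt{d}LD)\right)$ and $\Delta = \mathcal{O}\left(\varepsilon^2/(dLD^2)\right)$ to suppress the two smoothing/noise terms, then solves the optimization term for $N$, arriving at the same maximum of the two cubes. Your explicit tracking of how $d\sigma_f D/\tau$ inflates to $\Theta\left(d^{3/2}\sigma_f L D^2/\varepsilon\right)$ and the cube-splitting inequality are precisely the bookkeeping steps the paper states without elaboration.
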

        For a detailed proof of Theorems \ref{theorem:JAGUAR}, \ref{theorem:FW} and Corollary \ref{cor:FW}, see Appendix \ref{appendix:subsec_jaguar_sthoch}. 
        
        \textbf{Discussion.} Since we used SEGA and momentum parts in the \texttt{JAGUAR} approximation algorithm (Algorithm \ref{alg:FW_stoch}) we do not get the same convergence rate as in Theorems \ref{theorem:JAGUAR_nonstoch} and \ref{theorem:FW_nonstoch}, even if we switch from stochastic to deterministic setups, i.e, when we set $\sigma_\Delta = \sigma_f = 0$ in Theorems \ref{theorem:JAGUAR} and \ref{theorem:FW}. The same problems arise in the first-order case \cite{mokhtari2020stochastic, zhang2020one}, due to the difficulty of implementing the stochastic gradient in FW-type algorithms.

        We can apply the deterministic \texttt{JAGUAR} method (Algorithm \ref{alg:JAGUAR_nonstoch}) to the stochastic problem \eqref{eq:problem_stoch} and obtain the same estimates as in Theorems \ref{theorem:JAGUAR_nonstoch} and \ref{theorem:FW_nonstoch}, only the smoothed term of the form $\mathcal{O}\left(\text{poly}(\tau) + \text{poly}(\Delta / \tau) \right)$ will contain summands of the form $\mathcal{O}\left(\text{poly}(\sigma_\Delta^2) + \text{poly}(\sigma_f^2 / \tau) \right)$ Therefore, if $\sigma_\Delta^2, \sigma_f^2 \sim \Delta$, then deterministic Algorithm \ref{alg:JAGUAR_nonstoch} is suitable for the stochastic problem \eqref{eq:problem_stoch}. However, this means that we need to use big batches, therefore, we forced to use SEGA and momentum parts in the \texttt{JAGUAR} approximation. 


\section{Experiments}
\label{section:experiments}

    In this section, we present and discuss the results of our experiments on the application of \texttt{JAGUAR} gradient approximation to a variety of black-box optimization problems. Our results include optimization with Frank-Wolfe (FW) and Gradient Descent (GD) algorithms. In FW we consider both deterministic and stochastic cases, in GD we consider only the deterministic case.

\subsection{Experiment setup}

    We perform optimization on classification tasks with the SVM model on the set $Q$ of the form:
    \begin{equation}
    \label{eq:SVM}
        \min_{w \in Q, b \in \R} \left\{ f(w, b) = \frac{1}{m} \sum_{k=1}^{m} \left(1 - y_k [(Xw)_k - b]\right)_+ + \frac{1}{2C} \|w\|^2 \right\}. 
    \end{equation}

    We also consider a logistic regression model on the set $Q$ of the form:
    \begin{equation}
    \label{eq:logreg}
        \min_{w \in Q} \left\{f(w) = \frac{1}{m} \sum_{k=1}^{m} \log \left( 1 + \exp \left[ -y_k (Xw)_k \right] \right) + \frac{1}{2C} \|w\|^2 \right\}.
    \end{equation}
    where regularization term $\lambda=0.05$. 
    

    In both problems we use the regularization term $C = 10$. As  minimization set $Q$ we consider the simplex $\Delta_d$ and $l_2$-ball in the Frank-Wolfe algorithm and $\R^d$ in Gradient Descent algorithm. For the classification problem we use the classical datasets MNIST \cite{deng2012mnist} and Mushrooms \cite{chang2011libsvm}. We incorporate different approximation methods into optimization algorithms, solving \eqref{eq:SVM} and \eqref{eq:logreg} problems and show that the algorithm that uses the \texttt{JAGUAR} method (Algorithms \ref{alg:JAGUAR_nonstoch} and \ref{alg:FW_stoch}) performs best. We consider $l_2$-smoothing \eqref{eq:l2_approx} and full-approximation \eqref{eq:turtle_approx} as baseline estimators of the gradients. 

    \subsection{Deterministic Frank-Wolfe}
    \label{subsec:exp_det_FW}
    
    In this section, we consider deterministic noise of the form $f_\delta(x) = \text{round}(f(x), 5)$, i.e. we round all values of the function $f$ to the fifth decimal place. Figure~\ref{fig:FW_determ} shows the convergence over zero-order oracle calls of the deterministic FW algorithm. FW via JAGUAR (\ref{alg:FW} algorithm) shows better results than the baseline algorithms. This observation confirms our theoretical findings. 

    \begin{figure}[h!]
        \centering
        \includegraphics[width=0.4\textwidth]{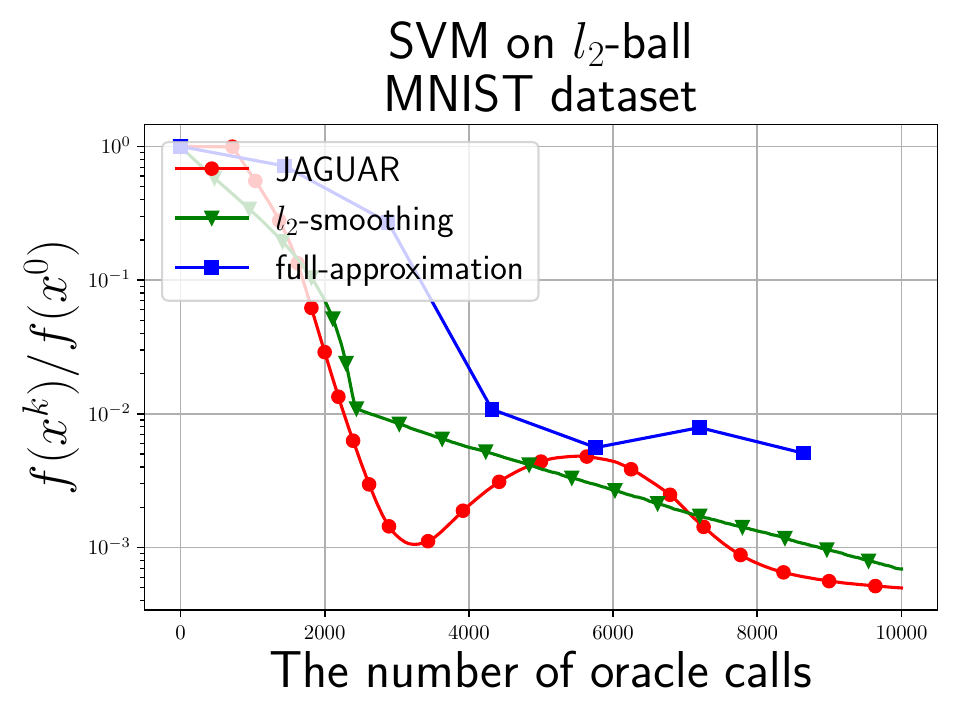}
        \includegraphics[width=0.4\textwidth]{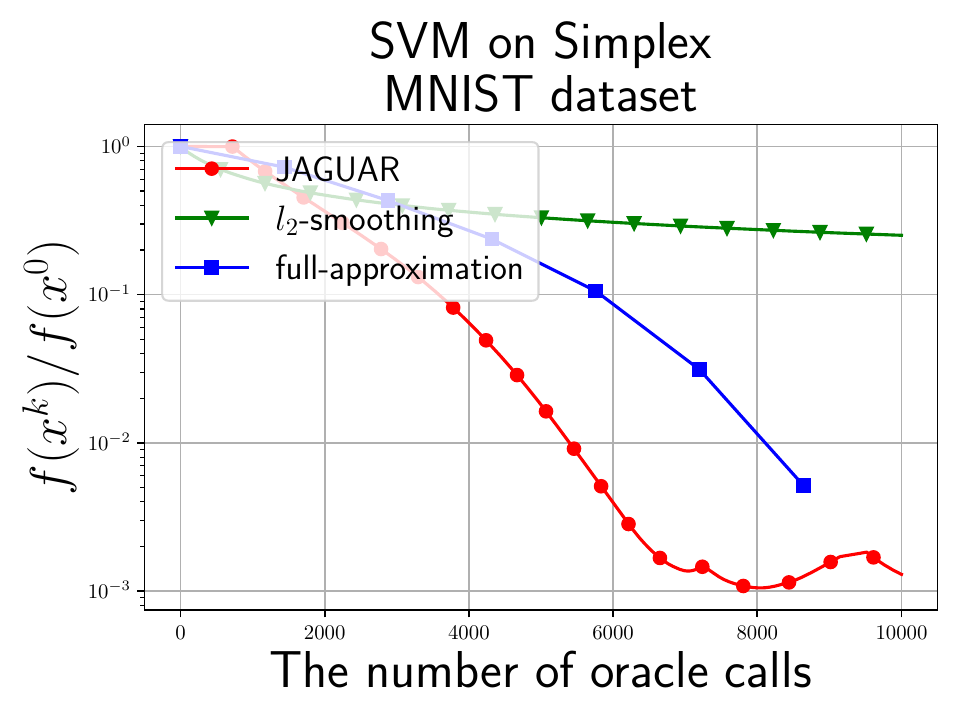}

        \includegraphics[width=0.4\textwidth]{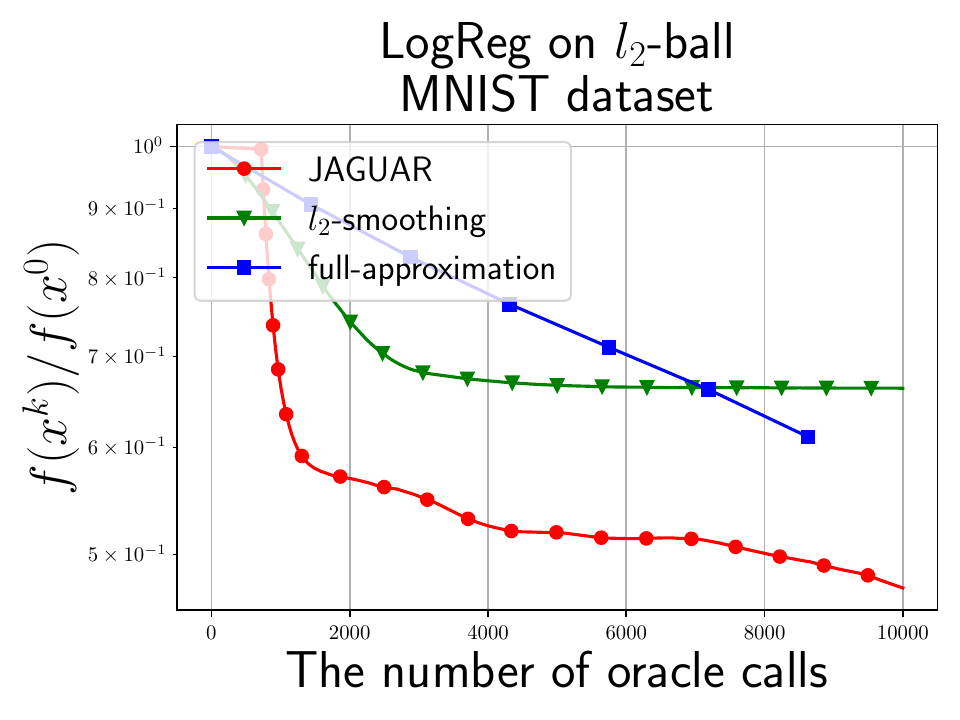}
        \includegraphics[width=0.4\textwidth]{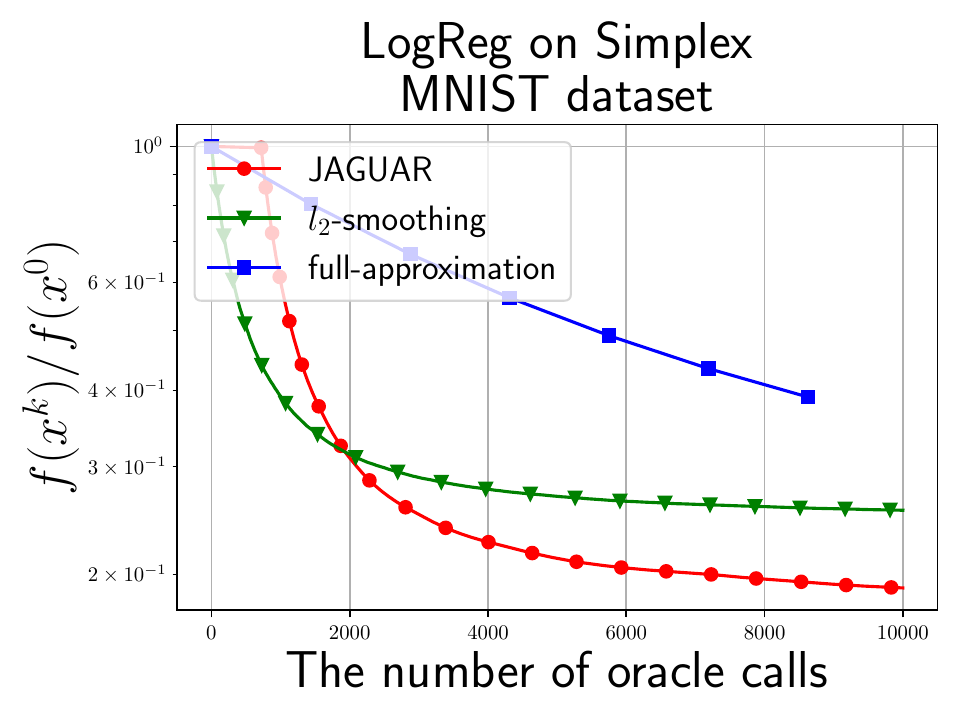}
        
        \caption{Deterministic FW Algorithm.}
        \label{fig:FW_determ}
    \end{figure}

\subsection{Stochastic Frank-Wolfe}

    In this section, we consider stochastic noise of the form $f_\delta(x, \xi) = f(x) + \xi; \xi \sim \mathcal{N}(0, 0.1)$. Figure~\ref{fig:FW_stoch} shows the convergence over zero-order oracle calls of the stochastic FW algorithm. Our theoretical findings are supported by observations.  FW via JAGUAR (\ref{alg:FW_stoch} algorithm) is robust to noise and outperforms the baseline algorithms.

    \begin{figure}[h!]
        \centering
        \includegraphics[width=0.4\textwidth]{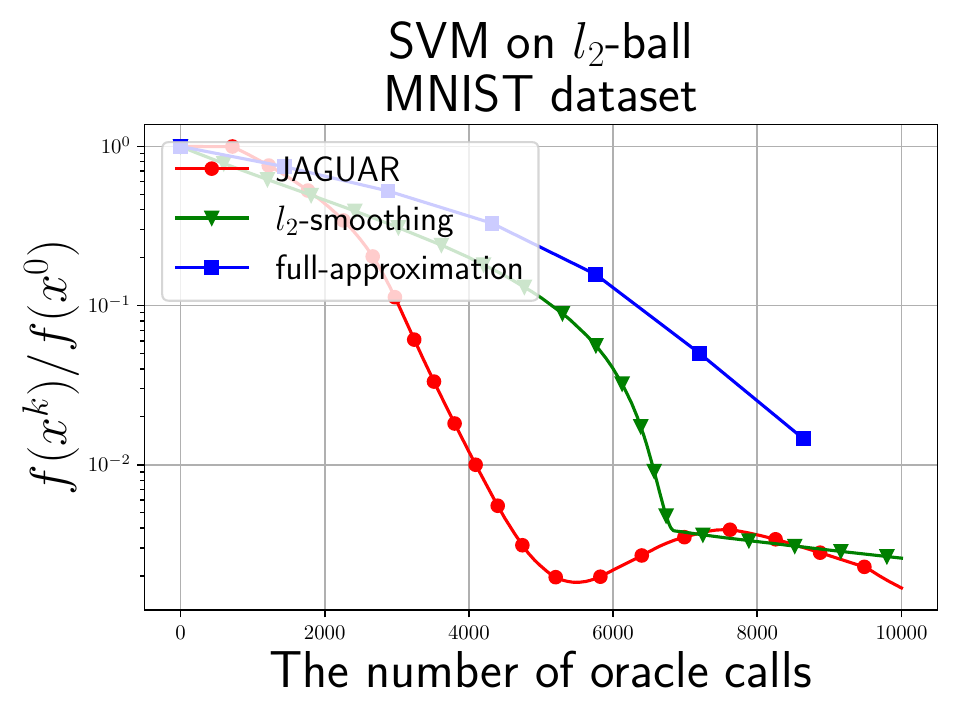}
        \includegraphics[width=0.4\textwidth]{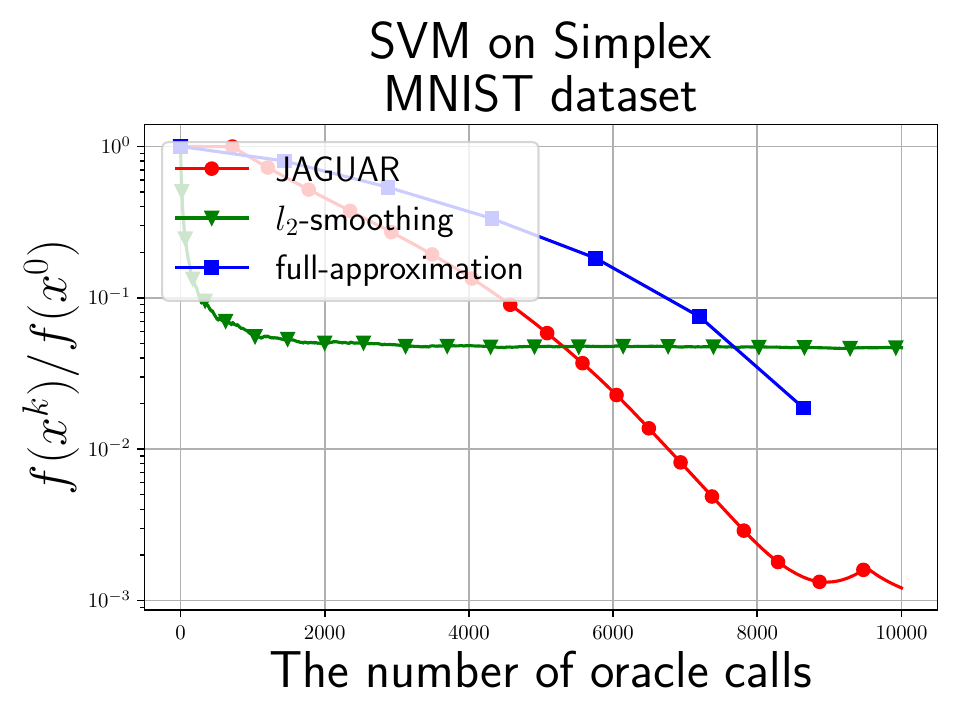}

        \includegraphics[width=0.4\textwidth]{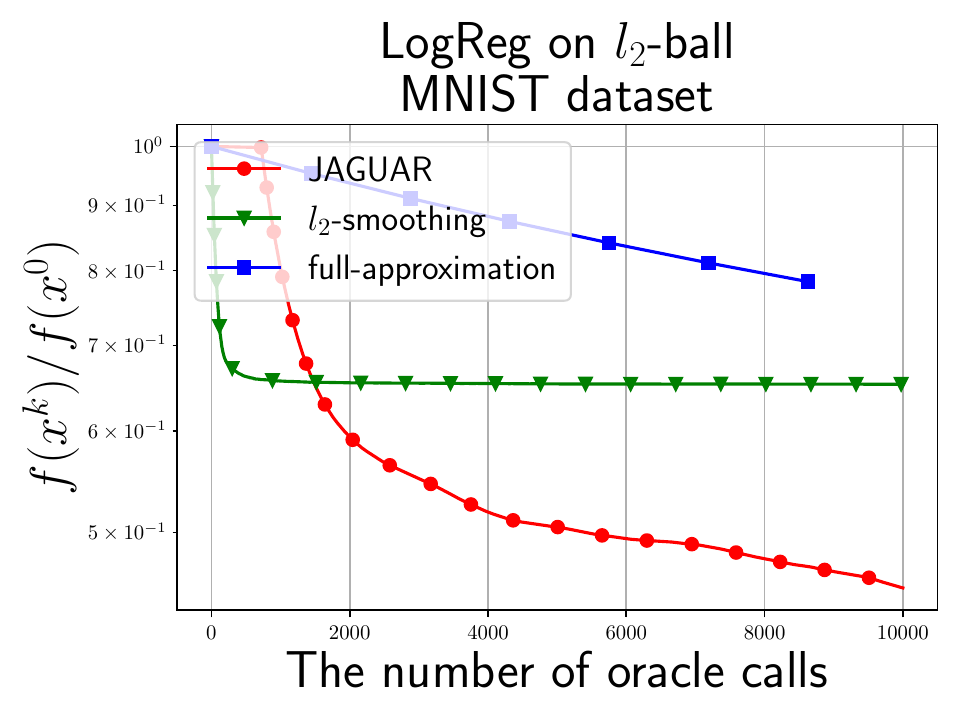}
        \includegraphics[width=0.4\textwidth]{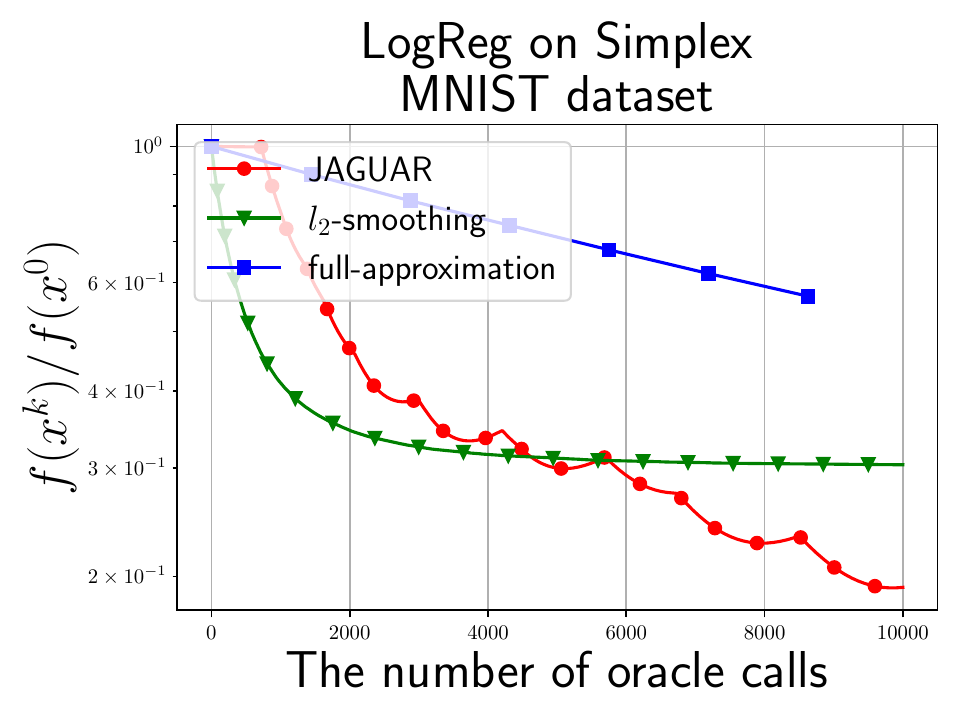}
        
        \caption{Stochastic FW Algorithm.}
        \label{fig:FW_stoch}
    \end{figure}

\subsection{Gradient descent}
    
    In this section, we again consider deterministic noise of the from $f_\delta(x) = \text{round}(f(x), 5)$. Figure~\ref{fig:GD} shows convergence over zero-order oracle calls of the deterministic GD algorithm. GD via JAGUAR (algorithm \ref{alg:GD}) outperforms the baseline algorithms, albeit by a small margin, and this observation confirms our theoretical findings. 

    \begin{figure}[h!]
        \centering
        \includegraphics[width=0.4\textwidth]{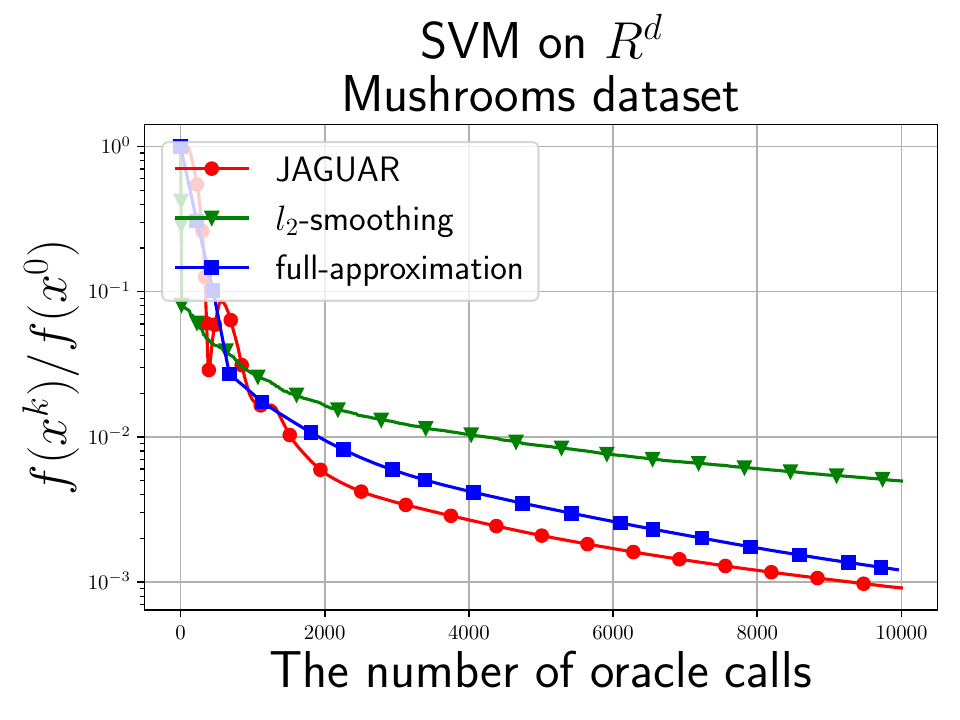}
        \includegraphics[width=0.4\textwidth]{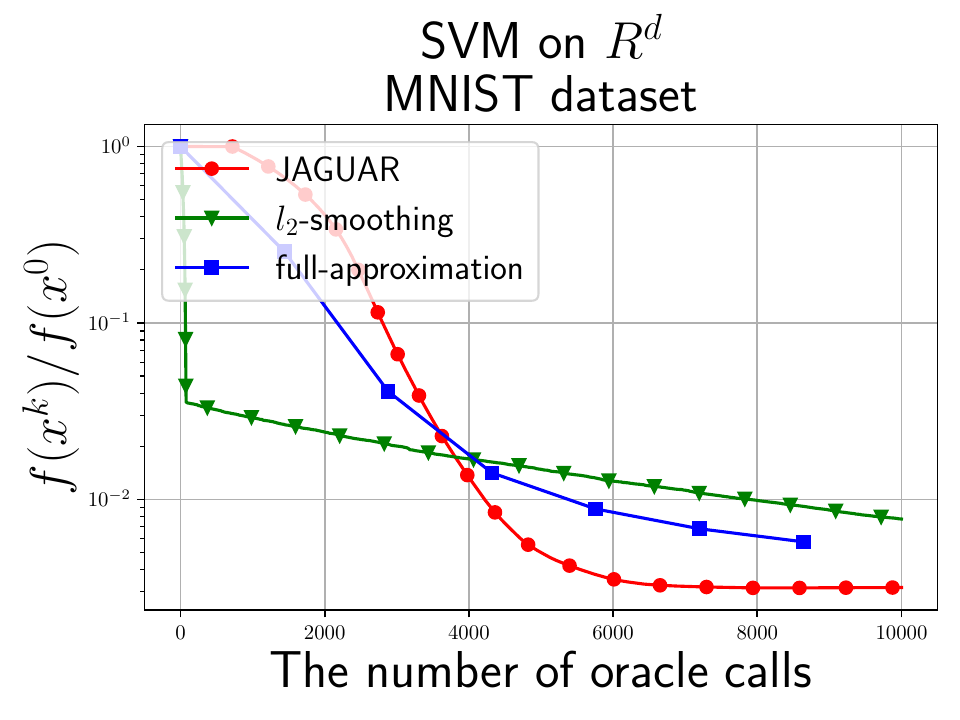}

        \includegraphics[width=0.4\textwidth]{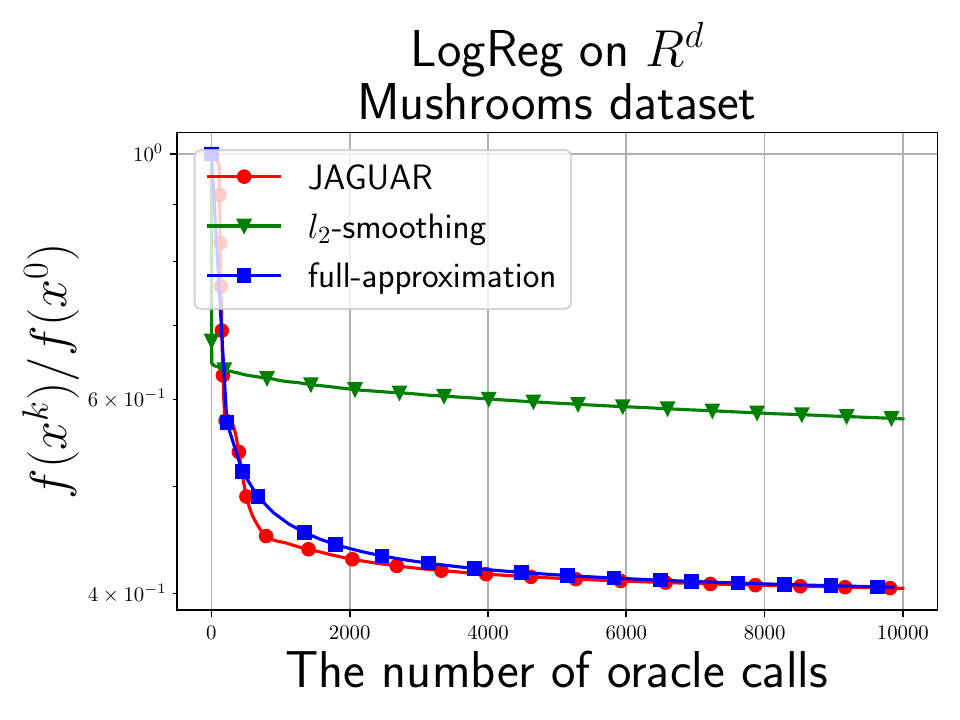}
        \includegraphics[width=0.4\textwidth]{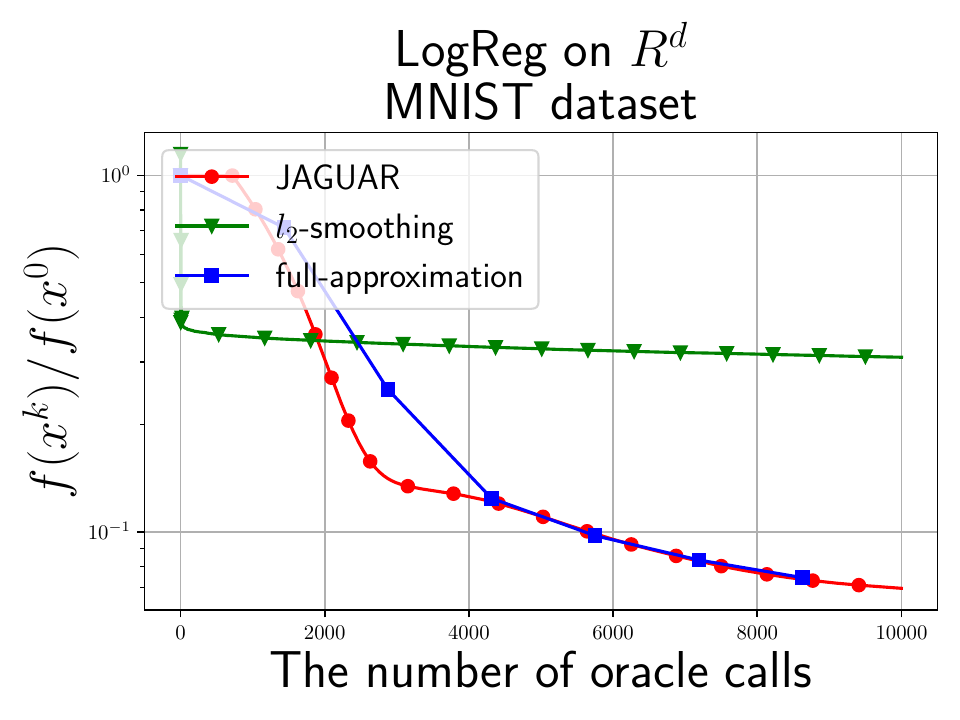}

        \caption{GD Algorithm.}
        \label{fig:GD}
    \end{figure}

\section{Conclusion}
    This paper introduces the \texttt{JAGUAR} algorithm, a novel gradient approximation method designed for black-box optimization problems, utilizing memory from previous iterations to estimate the true gradient with high accuracy while requiring only $\cO(1)$ oracle calls. The study provides rigorous theoretical proofs and extensive experimental validation, demonstrating \texttt{JAGUAR} superior performance in both deterministic and stochastic settings. Key contributions include convergence proofs for the Frank-Wolfe and Gradient Descent algorithms, with detailed theorems establishing convergence rates. Experimental results show that \texttt{JAGUAR} outperforms baseline methods in SVM and logistic regression optimization problems. 
    The results highlight \texttt{JAGUAR} efficiency and accuracy, making it a promising approach for future research and applications in zero-order optimization.


\section*{Acknowledgements}

The work of A. Veprikov was supported by a grant for research centers in the field of artificial intelligence, provided by the Analytical Center for the Government of the Russian Federation in accordance with the subsidy agreement (agreement identifier 000000D730321P5Q0002 ) and the agreement with the Ivannikov Institute for System Programming of the Russian Academy of Sciences dated November 2, 2021 No. 70-2021-00142.


\bibliography{main_arxiv}

\begin{thebibliography}{10}

\bibitem{agarwal2011stochastic}
Alekh Agarwal, Dean~P Foster, Daniel~J Hsu, Sham~M Kakade, and Alexander Rakhlin.
\newblock Stochastic convex optimization with bandit feedback.
\newblock {\em Advances in Neural Information Processing Systems}, 24, 2011.

\bibitem{akhavan2022gradient}
Arya Akhavan, Evgenii Chzhen, Massimiliano Pontil, and Alexandre Tsybakov.
\newblock A gradient estimator via l1-randomization for online zero-order optimization with two point feedback.
\newblock {\em Advances in Neural Information Processing Systems}, 35:7685--7696, 2022.

\bibitem{akhavan2020exploiting}
Arya Akhavan, Massimiliano Pontil, and Alexandre Tsybakov.
\newblock Exploiting higher order smoothness in derivative-free optimization and continuous bandits.
\newblock {\em Advances in Neural Information Processing Systems}, 33:9017--9027, 2020.

\bibitem{akhtar2022zeroth}
Zeeshan Akhtar and Ketan Rajawat.
\newblock Zeroth and first order stochastic frank-wolfe algorithms for constrained optimization.
\newblock {\em IEEE Transactions on Signal Processing}, 70:2119--2135, 2022.

\bibitem{bach2016highly}
Francis Bach and Vianney Perchet.
\newblock Highly-smooth zero-th order online optimization.
\newblock In {\em Conference on Learning Theory}, pages 257--283. PMLR, 2016.

\bibitem{bai2023query}
Yang Bai, Yisen Wang, Yuyuan Zeng, Yong Jiang, and Shu-Tao Xia.
\newblock Query efficient black-box adversarial attack on deep neural networks.
\newblock {\em Pattern Recognition}, 133:109037, 2023.

\bibitem{bayandina2018gradient}
Anastasia~Sergeevna Bayandina, Alexander~V Gasnikov, and Anastasia~A Lagunovskaya.
\newblock Gradient-free two-point methods for solving stochastic nonsmooth convex optimization problems with small non-random noises.
\newblock {\em Automation and Remote Control}, 79:1399--1408, 2018.

\bibitem{beznosikov2023sarah}
Aleksandr Beznosikov, David Dobre, and Gauthier Gidel.
\newblock Sarah frank-wolfe: Methods for constrained optimization with best rates and practical features.
\newblock {\em arXiv preprint arXiv:2304.11737}, 2023.

\bibitem{beznosikov2020derivative}
Aleksandr Beznosikov, Eduard Gorbunov, and Alexander Gasnikov.
\newblock Derivative-free method for composite optimization with applications to decentralized distributed optimization.
\newblock {\em IFAC-PapersOnLine}, 53(2):4038--4043, 2020.

\bibitem{beznosikov2021one}
Aleksandr Beznosikov, Vasilii Novitskii, and Alexander Gasnikov.
\newblock One-point gradient-free methods for smooth and non-smooth saddle-point problems.
\newblock In {\em Mathematical Optimization Theory and Operations Research: 20th International Conference, MOTOR 2021, Irkutsk, Russia, July 5--10, 2021, Proceedings 20}, pages 144--158. Springer, 2021.

\bibitem{beznosikov2020gradient}
Aleksandr Beznosikov, Abdurakhmon Sadiev, and Alexander Gasnikov.
\newblock Gradient-free methods with inexact oracle for convex-concave stochastic saddle-point problem.
\newblock In {\em International Conference on Mathematical Optimization Theory and Operations Research}, pages 105--119. Springer, 2020.

\bibitem{bogolubsky2016learning}
Lev Bogolubsky, Pavel Dvurechenskii, Alexander Gasnikov, Gleb Gusev, Yurii Nesterov, Andrei~M Raigorodskii, Aleksey Tikhonov, and Maksim Zhukovskii.
\newblock Learning supervised pagerank with gradient-based and gradient-free optimization methods.
\newblock {\em Advances in neural information processing systems}, 29, 2016.

\bibitem{bubeck2015convex}
S{\'e}bastien Bubeck et~al.
\newblock Convex optimization: Algorithms and complexity.
\newblock {\em Foundations and Trends{\textregistered} in Machine Learning}, 8(3-4):231--357, 2015.

\bibitem{chang2011libsvm}
Chih-Chung Chang and Chih-Jen Lin.
\newblock Libsvm: a library for support vector machines.
\newblock {\em ACM transactions on intelligent systems and technology (TIST)}, 2(3):1--27, 2011.

\bibitem{chen2017zoo}
Pin-Yu Chen, Huan Zhang, Yash Sharma, Jinfeng Yi, and Cho-Jui Hsieh.
\newblock Zoo: Zeroth order optimization based black-box attacks to deep neural networks without training substitute models.
\newblock In {\em Proceedings of the 10th ACM workshop on artificial intelligence and security}, pages 15--26, 2017.

\bibitem{dadras2022federated}
Ali Dadras, Karthik Prakhya, and Alp Yurtsever.
\newblock Federated frank-wolfe algorithm.
\newblock In {\em Workshop on Federated Learning: Recent Advances and New Challenges (in Conjunction with NeurIPS 2022)}, 2022.

\bibitem{defazio2014saga}
Aaron Defazio, Francis Bach, and Simon Lacoste-Julien.
\newblock Saga: A fast incremental gradient method with support for non-strongly convex composite objectives.
\newblock {\em Advances in neural information processing systems}, 27, 2014.

\bibitem{deng2012mnist}
Li~Deng.
\newblock The mnist database of handwritten digit images for machine learning research.
\newblock {\em IEEE Signal Processing Magazine}, 29(6):141--142, 2012.

\bibitem{duchi2012randomized}
John~C Duchi, Peter~L Bartlett, and Martin~J Wainwright.
\newblock Randomized smoothing for stochastic optimization.
\newblock {\em SIAM Journal on Optimization}, 22(2):674--701, 2012.

\bibitem{duchi2015optimal}
John~C Duchi, Michael~I Jordan, Martin~J Wainwright, and Andre Wibisono.
\newblock Optimal rates for zero-order convex optimization: The power of two function evaluations.
\newblock {\em IEEE Transactions on Information Theory}, 61(5):2788--2806, 2015.

\bibitem{dvinskikh2022noisy}
Darina Dvinskikh, Vladislav Tominin, Iaroslav Tominin, and Alexander Gasnikov.
\newblock Noisy zeroth-order optimization for non-smooth saddle point problems.
\newblock In {\em International Conference on Mathematical Optimization Theory and Operations Research}, pages 18--33. Springer, 2022.

\bibitem{dvurechensky2021accelerated}
Pavel Dvurechensky, Eduard Gorbunov, and Alexander Gasnikov.
\newblock An accelerated directional derivative method for smooth stochastic convex optimization.
\newblock {\em European Journal of Operational Research}, 290(2):601--621, 2021.

\bibitem{flaxman2004online}
Abraham~D Flaxman, Adam~Tauman Kalai, and H~Brendan McMahan.
\newblock Online convex optimization in the bandit setting: gradient descent without a gradient.
\newblock {\em arXiv preprint cs/0408007}, 2004.

\bibitem{frank1956algorithm}
Marguerite Frank and Philip Wolfe.
\newblock An algorithm for quadratic programming.
\newblock {\em Naval research logistics quarterly}, 3(1-2):95--110, 1956.

\bibitem{freund2017extended}
Robert~M Freund, Paul Grigas, and Rahul Mazumder.
\newblock An extended frank--wolfe method with “in-face” directions, and its application to low-rank matrix completion.
\newblock {\em SIAM Journal on optimization}, 27(1):319--346, 2017.

\bibitem{gao2020can}
Hongchang Gao and Heng Huang.
\newblock Can stochastic zeroth-order frank-wolfe method converge faster for non-convex problems?
\newblock In {\em International conference on machine learning}, pages 3377--3386. PMLR, 2020.

\bibitem{Randomized_gradient_free_methods_in_convex_optimization}
Alexander Gasnikov, Darina Dvinskikh, Pavel Dvurechensky, Eduard Gorbunov, Aleksander Beznosikov, and Alexander Lobanov.
\newblock Randomized gradient-free methods in convex optimization.
\newblock {\em arXiv preprint arXiv:2211.13566}, 2022.

\bibitem{gasnikov2016gradient-free}
Alexander Gasnikov, Anastasia Lagunovskaya, Ilnura Usmanova, and Fedor Fedorenko.
\newblock Gradient-free proximal methods with inexact oracle for convex stochastic nonsmooth optimization problems on the simplex.
\newblock {\em Automation and Remote Control}, 77:2018--2034, 2016.

\bibitem{gasnikov2022power}
Alexander Gasnikov, Anton Novitskii, Vasilii Novitskii, Farshed Abdukhakimov, Dmitry Kamzolov, Aleksandr Beznosikov, Martin Takac, Pavel Dvurechensky, and Bin Gu.
\newblock The power of first-order smooth optimization for black-box non-smooth problems.
\newblock In {\em International Conference on Machine Learning}, pages 7241--7265. PMLR, 2022.

\bibitem{gasnikov2017stochastic}
Alexander~V Gasnikov, Ekaterina~A Krymova, Anastasia~A Lagunovskaya, Ilnura~N Usmanova, and Fedor~A Fedorenko.
\newblock Stochastic online optimization. single-point and multi-point non-linear multi-armed bandits. convex and strongly-convex case.
\newblock {\em Automation and remote control}, 78:224--234, 2017.

\bibitem{goldfarb2017linear}
Donald Goldfarb, Garud Iyengar, and Chaoxu Zhou.
\newblock Linear convergence of stochastic frank wolfe variants.
\newblock In {\em Artificial Intelligence and Statistics}, pages 1066--1074. PMLR, 2017.

\bibitem{doi:10.1137/19M1259225}
Eduard Gorbunov, Pavel Dvurechensky, and Alexander Gasnikov.
\newblock An accelerated method for derivative-free smooth stochastic convex optimization.
\newblock {\em SIAM Journal on Optimization}, 32(2):1210--1238, 2022.

\bibitem{hanzely2018sega}
Filip Hanzely, Konstantin Mishchenko, and Peter Richt{\'a}rik.
\newblock Sega: Variance reduction via gradient sketching.
\newblock {\em Advances in Neural Information Processing Systems}, 31, 2018.

\bibitem{hazan2016introduction}
Elad Hazan et~al.
\newblock Introduction to online convex optimization.
\newblock {\em Foundations and Trends{\textregistered} in Optimization}, 2(3-4):157--325, 2016.

\bibitem{jaggi2011sparse}
Martin Jaggi.
\newblock Sparse convex optimization methods for machine learning.
\newblock 2011.

\bibitem{jaggi2013revisiting}
Martin Jaggi.
\newblock Revisiting frank-wolfe: Projection-free sparse convex optimization.
\newblock In {\em International conference on machine learning}, pages 427--435. PMLR, 2013.

\bibitem{ji2019improved}
Kaiyi Ji, Zhe Wang, Yi~Zhou, and Yingbin Liang.
\newblock Improved zeroth-order variance reduced algorithms and analysis for nonconvex optimization.
\newblock In {\em International conference on machine learning}, pages 3100--3109. PMLR, 2019.

\bibitem{johnson2013accelerating}
Rie Johnson and Tong Zhang.
\newblock Accelerating stochastic gradient descent using predictive variance reduction.
\newblock {\em Advances in neural information processing systems}, 26, 2013.

\bibitem{lacoste2013block}
Simon Lacoste-Julien, Martin Jaggi, Mark Schmidt, and Patrick Pletscher.
\newblock Block-coordinate frank-wolfe optimization for structural svms.
\newblock In {\em International Conference on Machine Learning}, pages 53--61. PMLR, 2013.

\bibitem{leblanc1985improved}
Larry~J LeBlanc, Richard~V Helgason, and David~E Boyce.
\newblock Improved efficiency of the frank-wolfe algorithm for convex network programs.
\newblock {\em Transportation Science}, 19(4):445--462, 1985.

\bibitem{lei2022zeroth}
Yuheng Lei, Jianyu Chen, Shengbo~Eben Li, and Sifa Zheng.
\newblock Zeroth-order actor-critic.
\newblock {\em arXiv preprint arXiv:2201.12518}, 2022.

\bibitem{li2021page}
Zhize Li, Hongyan Bao, Xiangliang Zhang, and Peter Richt{\'a}rik.
\newblock Page: A simple and optimal probabilistic gradient estimator for nonconvex optimization.
\newblock In {\em International conference on machine learning}, pages 6286--6295. PMLR, 2021.

\bibitem{lian2015asynchronous}
Xiangru Lian, Yijun Huang, Yuncheng Li, and Ji~Liu.
\newblock Asynchronous parallel stochastic gradient for nonconvex optimization.
\newblock {\em Advances in neural information processing systems}, 28, 2015.

\bibitem{liu2018zeroth}
Sijia Liu, Bhavya Kailkhura, Pin-Yu Chen, Paishun Ting, Shiyu Chang, and Lisa Amini.
\newblock Zeroth-order stochastic variance reduction for nonconvex optimization.
\newblock {\em Advances in Neural Information Processing Systems}, 31, 2018.

\bibitem{lobanov2023zero}
Aleksandr Lobanov, Anton Anikin, Alexander Gasnikov, Alexander Gornov, and Sergey Chukanov.
\newblock Zero-order stochastic conditional gradient sliding method for non-smooth convex optimization.
\newblock {\em arXiv preprint arXiv:2303.02778}, 2023.

\bibitem{lobanov2023non}
Aleksandr Lobanov, Andrew Veprikov, Georgiy Konin, Aleksandr Beznosikov, Alexander Gasnikov, and Dmitry Kovalev.
\newblock Non-smooth setting of stochastic decentralized convex optimization problem over time-varying graphs.
\newblock {\em Computational Management Science}, 20(1):48, 2023.

\bibitem{lu2021generalized}
Haihao Lu and Robert~M Freund.
\newblock Generalized stochastic frank--wolfe algorithm with stochastic “substitute” gradient for structured convex optimization.
\newblock {\em Mathematical Programming}, 187(1):317--349, 2021.

\bibitem{mokhtari2020stochastic}
Aryan Mokhtari, Hamed Hassani, and Amin Karbasi.
\newblock Stochastic conditional gradient methods: From convex minimization to submodular maximization.
\newblock {\em The Journal of Machine Learning Research}, 21(1):4232--4280, 2020.

\bibitem{nakashima2024ancestral}
So~Nakashima and Tetsuya~J Kobayashi.
\newblock Ancestral reinforcement learning: Unifying zeroth-order optimization and genetic algorithms for reinforcement learning.
\newblock {\em arXiv preprint arXiv:2408.09493}, 2024.

\bibitem{nemirovskij1983problem}
Arkadij~Semenovi{\v{c}} Nemirovskij and David~Borisovich Yudin.
\newblock Problem complexity and method efficiency in optimization.
\newblock 1983.

\bibitem{doi:10.1137/100802001}
Yu. Nesterov.
\newblock Efficiency of coordinate descent methods on huge-scale optimization problems.
\newblock {\em SIAM Journal on Optimization}, 22(2):341--362, 2012.

\bibitem{nesterov2017random}
Yurii Nesterov and Vladimir Spokoiny.
\newblock Random gradient-free minimization of convex functions.
\newblock {\em Foundations of Computational Mathematics}, 17:527--566, 2017.

\bibitem{doi:10.1137/16M1060182}
Yurii Nesterov and Sebastian~U. Stich.
\newblock Efficiency of the accelerated coordinate descent method on structured optimization problems.
\newblock {\em SIAM Journal on Optimization}, 27(1):110--123, 2017.

\bibitem{nguyen2017sarah}
Lam~M Nguyen, Jie Liu, Katya Scheinberg, and Martin Tak{\'a}{\v{c}}.
\newblock Sarah: A novel method for machine learning problems using stochastic recursive gradient.
\newblock In {\em International conference on machine learning}, pages 2613--2621. PMLR, 2017.

\bibitem{osokin2016minding}
Anton Osokin, Jean-Baptiste Alayrac, Isabella Lukasewitz, Puneet Dokania, and Simon Lacoste-Julien.
\newblock Minding the gaps for block frank-wolfe optimization of structured svms.
\newblock In {\em international conference on machine learning}, pages 593--602. PMLR, 2016.

\bibitem{reddi2016stochastic}
Sashank~J Reddi, Suvrit Sra, Barnab{\'a}s P{\'o}czos, and Alex Smola.
\newblock Stochastic frank-wolfe methods for nonconvex optimization.
\newblock In {\em 2016 54th annual Allerton conference on communication, control, and computing (Allerton)}, pages 1244--1251. IEEE, 2016.

\bibitem{richtarik2014iteration}
Peter Richt{\'a}rik and Martin Tak{\'a}{\v{c}}.
\newblock Iteration complexity of randomized block-coordinate descent methods for minimizing a composite function.
\newblock {\em Mathematical Programming}, 144(1):1--38, 2014.

\bibitem{risteski2016algorithms}
Andrej Risteski and Yuanzhi Li.
\newblock Algorithms and matching lower bounds for approximately-convex optimization.
\newblock {\em Advances in Neural Information Processing Systems}, 29, 2016.

\bibitem{sahu2018distributed}
Anit~Kumar Sahu, Dusan Jakovetic, Dragana Bajovic, and Soummya Kar.
\newblock Distributed zeroth order optimization over random networks: A kiefer-wolfowitz stochastic approximation approach.
\newblock In {\em 2018 IEEE Conference on Decision and Control (CDC)}, pages 4951--4958. IEEE, 2018.

\bibitem{sahu2019towards}
Anit~Kumar Sahu, Manzil Zaheer, and Soummya Kar.
\newblock Towards gradient free and projection free stochastic optimization.
\newblock In {\em The 22nd International Conference on Artificial Intelligence and Statistics}, pages 3468--3477. PMLR, 2019.

\bibitem{shamir2017optimal}
Ohad Shamir.
\newblock An optimal algorithm for bandit and zero-order convex optimization with two-point feedback.
\newblock {\em The Journal of Machine Learning Research}, 18(1):1703--1713, 2017.

\bibitem{statkevich2024gradient}
Ekaterina Statkevich, Sofiya Bondar, Darina Dvinskikh, Alexander Gasnikov, and Aleksandr Lobanov.
\newblock Gradient-free algorithm for saddle point problems under overparametrization.
\newblock {\em Chaos, Solitons \& Fractals}, 185:115048, 2024.

\bibitem{taskar2005learning}
Ben Taskar, Vassil Chatalbashev, Daphne Koller, and Carlos Guestrin.
\newblock Learning structured prediction models: A large margin approach.
\newblock In {\em Proceedings of the 22nd international conference on Machine learning}, pages 896--903, 2005.

\bibitem{tu2019autozoom}
Chun-Chen Tu, Paishun Ting, Pin-Yu Chen, Sijia Liu, Huan Zhang, Jinfeng Yi, Cho-Jui Hsieh, and Shin-Ming Cheng.
\newblock Autozoom: Autoencoder-based zeroth order optimization method for attacking black-box neural networks.
\newblock In {\em Proceedings of the AAAI conference on artificial intelligence}, volume~33, pages 742--749, 2019.

\bibitem{wang2016parallel}
Yu-Xiang Wang, Veeranjaneyulu Sadhanala, Wei Dai, Willie Neiswanger, Suvrit Sra, and Eric Xing.
\newblock Parallel and distributed block-coordinate frank-wolfe algorithms.
\newblock In {\em International Conference on Machine Learning}, pages 1548--1557. PMLR, 2016.

\bibitem{wright2015coordinate}
Stephen~J Wright.
\newblock Coordinate descent algorithms.
\newblock {\em Mathematical programming}, 151(1):3--34, 2015.

\bibitem{zhang2020one}
Mingrui Zhang, Zebang Shen, Aryan Mokhtari, Hamed Hassani, and Amin Karbasi.
\newblock One sample stochastic frank-wolfe.
\newblock In {\em International Conference on Artificial Intelligence and Statistics}, pages 4012--4023. PMLR, 2020.

\end{thebibliography}
\bibliographystyle{plain} 

\appendix

\newpage

\onecolumn
\part*{Supplementary Material}
\tableofcontents
\newpage


\section{Auxiliary Lemmas and Facts}

    In this section we list auxiliary facts and our results that we use several times in our proofs.

    \subsection{Squared norm of the sum}
    \label{axil:squared}
        For all $x_1, ... , x_n \in \mathbb{R}^n$, where $n \in \{2, 4\}$

        \begin{equation*}
            \norms{x_1 + x_2 + ... + x_n}^2 \leq n \norms{x_1}^2 + ... + n \norms{x_n}^2 . 
        \end{equation*}

    \subsection{Cauchy–Schwarz inequality}
    \label{axil:cauchy_schwarz}
        For all $x, y \in \mathbb{R}^d$

        \begin{equation*}
            \dotprod{x}{y} \leq \norms{x}\norms{y} .
        \end{equation*}

    \subsection{Fenchel-Young inequality}
    \label{axil:fenchel_young}
    For all $x, y \in \mathbb{R}^d$ and $\beta > 0$
    
    \begin{equation*}
        2 \dotprod{x}{y} \leq \beta^{-1} \|x\|^2 + \beta \|y\|^2 .
    \end{equation*}

    \subsection{Recursion Lemma}
    \begin{lemma}
    \label{lem:baskirskaya_lemma}
        For all $x \in [0; 1)$ consider a function 
        $$\phi(x) := 1 - (1 - x)^{\alpha} - \max\{1, \alpha\} x.$$
        
        Then for all $0 \leq x < 1$ and $\alpha \in \mathbb{R}$ we can obtain that  $\phi(x) \leq 0$.
    \end{lemma}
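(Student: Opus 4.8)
The plan is to split the analysis at $\alpha = 1$, which is exactly where the $\max\{1,\alpha\}$ switches branches, and in each regime to reduce the target inequality $\phi(x) \le 0$ to a single elementary estimate on the power $(1-x)^\alpha$. Throughout I would keep in mind the endpoint value $\phi(0) = 1 - 1 - 0 = 0$, which shows the bound is tight and cannot be improved.

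First I would handle the regime $\alpha \le 1$, where $\max\{1,\alpha\} = 1$, so that $\phi(x) = 1 - (1-x)^\alpha - x$. Here $\phi(x) \le 0$ is equivalent to $(1-x)^\alpha \ge 1 - x$. Setting $u := 1 - x \in (0,1]$, I would observe that the map $\alpha \mapsto u^\alpha = e^{\alpha \ln u}$ is non-increasing, since $\ln u \le 0$; hence $\alpha \le 1$ forces $u^\alpha \ge u^1 = u$, which is precisely $(1-x)^\alpha \ge 1 - x$. This single monotonicity-in-the-exponent argument disposes of all $\alpha \le 1$ at once, including the negative and zero cases.

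Next I would treat the regime $\alpha \ge 1$, where $\max\{1,\alpha\} = \alpha$ and $\phi(x) = 1 - (1-x)^\alpha - \alpha x$. Now $\phi(x) \le 0$ is equivalent to $(1-x)^\alpha \ge 1 - \alpha x$, which is exactly Bernoulli's inequality $(1+t)^\alpha \ge 1 + \alpha t$ applied with $t = -x \in (-1,0]$ and exponent $\alpha \ge 1$ (equivalently, the tangent-line bound from convexity of $t \mapsto (1+t)^\alpha$). The two regimes agree consistently at $\alpha = 1$, where each reduces to $\phi(x) = 1 - (1-x) - x = 0$.

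The computations are short, and the only genuine subtlety is the sub-regime $0 < \alpha < 1$: a direct sign analysis of $\phi'(x) = \alpha(1-x)^{\alpha-1} - 1$ is awkward because $\phi'$ changes sign on $(0,1)$, so the function is not monotone there. I would sidestep this entirely with the monotonicity-in-$\alpha$ argument above (or, alternatively, by noting that $\phi$ is convex on $[0,1)$ with $\phi(0) = \phi(1^-) = 0$ and applying the chord bound $\phi(x) \le (1-x)\phi(0) + x\phi(1^-) = 0$). Choosing the right formulation, rather than brute-force calculus, is the step that carries the proof.
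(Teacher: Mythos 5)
Your proof is correct, and it takes a genuinely different route from the paper's. The paper splits on $\alpha \notin (0;1)$ versus $\alpha \in (0;1)$: for the first case it applies Bernoulli's inequality $(1-x)^\alpha \geq 1 - \alpha x$ (valid both for $\alpha \geq 1$ and $\alpha \leq 0$) together with $\max\{1,\alpha\}x \geq \alpha x$, and for the second case it computes $\phi''(x) = \alpha(1-\alpha)(1-x)^{\alpha-2} > 0$ and uses convexity of $\phi$ with $\phi(0) = \phi(1) = 0$ to conclude $\phi \leq 0$ on the segment --- exactly the chord argument you mention parenthetically as an alternative. You instead split at $\alpha = 1$, precisely where $\max\{1,\alpha\}$ switches branches, which makes each reduction immediate: for $\alpha \geq 1$ you use the same Bernoulli step as the paper, while for $\alpha \leq 1$ you replace the paper's two separate arguments (Bernoulli for $\alpha \leq 0$, convexity for $0 < \alpha < 1$) by the single observation that $\alpha \mapsto (1-x)^\alpha = e^{\alpha \ln(1-x)}$ is non-increasing, so $(1-x)^\alpha \geq 1-x$. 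Your decomposition is arguably cleaner: it unifies the negative and fractional exponent cases, avoids any differentiation, and aligns the case split with the structure of the statement itself; what the paper's version buys is only that its second case is a self-contained convexity argument that does not require thinking about monotonicity in the exponent. Both proofs are complete and elementary, and they agree on the $\alpha \geq 1$ branch.
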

    \begin{proof}
        First consider the case of $\alpha \notin (0; 1)$. Then we can write out Bernoulli's inequality: for all  $x < 1$ it holds that

        \begin{equation*}
            (1 - x)^{\alpha} \geq 1 - \alpha x.
        \end{equation*}

        Therefore for $0 \leq x < 1$:

        \begin{equation*}
            \phi(x) = 1 - (1 - x)^{\alpha} - \max\{1, \alpha\} x \leq 1 - (1 - x)^{\alpha} - \alpha x \leq 0.
        \end{equation*}

        Now we consider case $0 < \alpha < 1$, therefore $\phi(x)$ takes the form

        \begin{equation*}
            \phi(x) = 1 - (1 - x)^{\alpha} -  x.
        \end{equation*}

        Note that

        \begin{equation*}
            \phi''(x) = \alpha(1 - \alpha)(1 - x)^{\alpha-2} > 0.
        \end{equation*}

        Therefore $\phi(x)$ is convex on a segment $[0; 1]$ and $\psi(0) = \psi(1) = 0$, that means that $\phi(x) \leq 0$ for all $x \in [0; 1)$. This finishes the proof.
    \end{proof}

    \begin{lemma}[Recursion Lemma]
    \label{lem:recursion}
    Suppose we have the following recurrence relation for variables $\{r_k\}_{k=0}^N \subset \mathbb{R}$

    \begin{equation}
    \label{eq:recur_start}
        r_{k+1} \leq \left(1 - \frac{\beta_0}{(k + k_0)^{\alpha_0}}\right) r_{k} + \sum\limits_{i = 1}^m \frac{\beta_i}{(k + k_0)^{\alpha_i}},
    \end{equation}

    where $\beta_i > 0 ~~ \forall i \in \overline{0, m}$, $0 \leq \alpha_0 \leq 1$, $\alpha_i \in \mathbb{R} ~~ \forall i \in \overline{1, m}$. 
    
    Then we can estimate the convergence of the sequence $\{r_k\}_{k=0}^N$ to zero:

    \begin{equation}
    \label{eq:recur_end}
        r_k \leq 2 \cdot \sum\limits_{i = 1}^m \frac{Q_i}{(k+k_0)^{\alpha_i - \alpha_0}},
    \end{equation}

    where $Q_{i^*} = \max\{\beta_{i^*} / \beta_0, r_0 k_0^{\alpha_{i^*} - \alpha_0}\}$ and $Q_i = \beta_i / \beta_0$ if $i \neq i^*$, where $i^*$ we can choose arbitrarily from the set $\overline{1, m}$, and

    \begin{itemize}
        \item[$\bullet$] if $~0 \leq \alpha_0 < 1$:

        \begin{equation*}
            k_0 \geq \left( \frac{2}{\beta_0} \max\{1, \max\{\alpha_i\} - \alpha_0\} \right)^{\frac{1}{1 - \alpha_0}} ~~ \text{ and } ~~ \beta_0 > 0.
        \end{equation*}

        \item[$\bullet$] if $~\alpha_0 = 1$:

        \begin{equation*}
            k_0 \in \mathbb{N} ~~\text{ and }~~ \beta_0 \geq 2 \max\{1, \max\{\alpha_i\} - 1\}.
        \end{equation*}
    \end{itemize}
        
    \end{lemma}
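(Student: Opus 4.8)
The plan is to prove the bound $r_k \leq 2\sum_{i=1}^m Q_i (k+k_0)^{\alpha_0 - \alpha_i}$ by induction on $k$, with Lemma \ref{lem:baskirskaya_lemma} serving as the engine that linearizes a power of $(1-x)$. Throughout I would write $t := k + k_0 > 0$ and abbreviate the target as $R_k := 2\sum_{i=1}^m Q_i (k+k_0)^{\alpha_0 - \alpha_i}$, so the claim reads $r_k \leq R_k$. The whole argument hinges on a single scalar estimate, and the two case hypotheses on $(k_0,\beta_0)$ are exactly what is needed to make that estimate hold.

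For the base case $k=0$ I would isolate the distinguished index $i^*$. Because $Q_{i^*} \geq r_0 k_0^{\alpha_{i^*}-\alpha_0}$, the single term $2Q_{i^*} k_0^{\alpha_0 - \alpha_{i^*}} \geq 2r_0$, while each remaining term $2Q_i k_0^{\alpha_0-\alpha_i}$ is nonnegative since $Q_i = \beta_i/\beta_0 > 0$. Hence $R_0 \geq 2r_0 \geq r_0$ when $r_0 \geq 0$, and $R_0 > 0 > r_0$ otherwise; either way the base case holds.

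For the inductive step, assume $r_k \leq R_k$ and substitute into \eqref{eq:recur_start}. The key normalization is to rewrite each forcing term as $\beta_i(k+k_0)^{-\alpha_i} = \frac{\beta_0}{t^{\alpha_0}}\cdot\frac{\beta_i/\beta_0}{t^{\alpha_i-\alpha_0}}$ and to use $\beta_i/\beta_0 \leq Q_i$ (with equality for $i \neq i^*$). Summing, the entire forcing sum is dominated by $\tfrac{\beta_0}{2t^{\alpha_0}}R_k$, which consumes exactly half of the contraction gain and yields
$$r_{k+1} \leq R_k\Big(1 - \frac{\beta_0}{2t^{\alpha_0}}\Big) = 2\sum_{i=1}^m \frac{Q_i}{t^{\alpha_i-\alpha_0}}\Big(1 - \frac{\beta_0}{2t^{\alpha_0}}\Big).$$
Since every $Q_i > 0$, the desired inequality $r_{k+1}\leq R_{k+1}$ reduces term by term to showing, for each $i$, that $1 - \frac{\beta_0}{2t^{\alpha_0}} \leq \left(\frac{t}{t+1}\right)^{\alpha_i-\alpha_0}$.

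The final and most delicate step is this scalar inequality, and here Lemma \ref{lem:baskirskaya_lemma} enters. Applying it with $x = 1/(t+1)\in[0,1)$ and exponent $\alpha_i-\alpha_0$ gives $\left(\tfrac{t}{t+1}\right)^{\alpha_i-\alpha_0} \geq 1 - \max\{1,\alpha_i-\alpha_0\}\tfrac{1}{t+1}$, so it suffices to verify $2\max\{1,\alpha_i-\alpha_0\}\,t^{\alpha_0} \leq \beta_0(t+1)$, which I would bound uniformly in $i$ by replacing $\alpha_i$ with $\max_i\alpha_i$. The two regimes then close it out: for $\alpha_0 = 1$ the hypothesis $\beta_0 \geq 2\max\{1,\max_i\alpha_i - 1\}$ suffices because $t \leq t+1$; for $0 \leq \alpha_0 < 1$ the lower bound on $k_0$ ensures $t^{1-\alpha_0} \geq \tfrac{2}{\beta_0}\max\{1,\max_i\alpha_i-\alpha_0\}$, which rearranges to the claim (again using $t\leq t+1$). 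The main obstacle is organizing the algebra of the inductive step so that precisely half of the contraction is spent on the forcing terms while the other half is matched against the decay of $(t+1)^{\alpha_0-\alpha_i}$ through the linearized power bound; pinning down the constant $2$ and aligning it with the case split on $\alpha_0$ is the real crux.
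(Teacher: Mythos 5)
Your proposal is correct and follows essentially the same path as the paper's proof: induction with the same base case via $Q_{i^*} \geq r_0 k_0^{\alpha_{i^*}-\alpha_0}$, the same use of $\beta_i \leq Q_i\beta_0$ to absorb the forcing terms into half the contraction, reduction to the same per-term scalar inequality, and the same application of Lemma \ref{lem:baskirskaya_lemma} with $x = 1/(k+k_0+1)$ followed by the identical case split on $\alpha_0$. The only cosmetic difference is that you keep the factor $t+1$ on the right-hand side where the paper first bounds $1/(t+1) \leq 1/t$; both close under the stated hypotheses.
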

    \begin{proof}
        We prove the claim in \eqref{eq:recur_end} by induction. First, note that

        \begin{equation*}
            r_0 = r_0 \cdot \left(\frac{k_0}{0 + k_0}\right)^{\alpha_{i^*} - \alpha_0} 
            \leq 
            \frac{Q_{i^*}}{(0 + k_0)^{\alpha_{i^*} - \alpha_0}}
            \leq
            2 \cdot \sum\limits_{i = 0}^m \frac{Q_i}{(0 + k_0)^{\alpha_i - \alpha_0}}.
        \end{equation*}
        
        and therefore the base step of the induction holds true. 
        
        Now assume that the condition in \eqref{eq:recur_end} holds for some $k$. Now we will show that this condition will hold for $k + 1$.

        We start by fitting \eqref{eq:recur_end} into the original recurrence relation \eqref{eq:recur_start} and using that $\beta_i \leq Q_i \beta_0$:

        \begin{equation*}
        \begin{split}
            r_{k+1} &\leq
            \left(1 - \frac{\beta_0}{(k + k_0)^{\alpha_0}}\right) \cdot \left(2 \sum\limits_{i = 1}^m \frac{Q_i}{(k + k_0)^{\alpha_i - \alpha_0}} \right) + \sum\limits_{i=1}^m \frac{\beta_i}{(k + k_0)^{\alpha_i}}
            \\&\leq
            2 \sum\limits_{i = 1}^m \frac{Q_i}{(k + k_0)^{\alpha_i - \alpha_0}}
            -
            \sum\limits_{i = 1}^m \frac{Q_i \beta_0}{(k + k_0)^{\alpha_i}}
            \\&=
            \sum\limits_{i = 1}^m \left(\frac{2 Q_i}{(k + k_0)^{\alpha_i - \alpha_0}} - \frac{Q_i \beta_0}{(k + k_0)^{\alpha_i}} \right).
        \end{split}
        \end{equation*}

        Our goal is to show that for all $i \in \overline{1, m}$ it holds that 

        \begin{equation}
        \label{eq:desired_ineq}
            \frac{2 Q_i}{(k + k_0)^{\alpha_i - \alpha_0}} - \frac{Q_i \beta_0}{(k + k_0)^{\alpha_i}} \leq \frac{2 Q_i}{(k + k_0 + 1)^{\alpha_i - \alpha_0}}.
        \end{equation}

        Let us rewrite this inequality in such a way that it takes a more convenient form:

        \begin{equation*}
            \frac{2}{\beta_0}\underbrace{ \left[  1 - \left( 1 - \frac{1}{k + k_0 + 1}\right)^{\alpha_i - \alpha_0} \right]}_{\circledOne} \leq \left( \frac{1}{k+k_0}\right)^{\alpha_0}.
        \end{equation*}

        Using Lemma \ref{lem:baskirskaya_lemma} with $x = (k + k_0 + 1)^{-1} \in [0; 1)$ and $\alpha = \alpha_i - \alpha_0$ we can obtain that

        \begin{equation*}
            \circledOne \leq \max\{1, \alpha_i - \alpha_0\} \frac{1}{k+k_0 + 1} \leq \max\{1, \alpha_i - \alpha_0\} \frac{1}{k + k_0}.
        \end{equation*}

        Now our desired inequality \eqref{eq:desired_ineq} takes form 

        \begin{equation*}
            \frac{2}{\beta_0} \max\{1, \alpha_i - \alpha_0\} \frac{1}{k + k_0} \leq \left( \frac{1}{k+k_0}\right)^{\alpha_0}.
        \end{equation*}

        Again, we rewrite it in a more convenient form:

        \begin{equation}
        \label{eq:desired_ineq_final}
            \frac{2}{\beta_0} \max\{1, \alpha_i - \alpha_0\} \leq (k + k_0)^{1 - \alpha_0}.
        \end{equation}

        Now consider two cases

        \begin{itemize}
            \item[$\bullet$] If $~0 \leq \alpha_0 < 1$.

            In this case $(k + k_0)^{1 - \alpha_0} \geq k_0^{1 - \alpha_0}$ and if we take

            \begin{equation*}
                k_0 \geq \left( \frac{2}{\beta_0} \max\{1, \max\{\alpha_i\} - \alpha_0\} \right)^{\frac{1}{1 - \alpha_0}},
            \end{equation*}

            then according to \eqref{eq:desired_ineq_final} desired inequality \eqref{eq:desired_ineq} will be fulfilled for all $i \in \overline{1, m}$ for all $\beta_0 > 0$. 

            \item[$\bullet$] If $~\alpha_0 = 1$, then inequality \eqref{eq:desired_ineq_final} takes form

            \begin{equation*}
                \frac{2}{\beta_0} \max\{1, \alpha_i - 1\} \leq 1.
            \end{equation*}

            Therefore if we take

            \begin{equation*}
                \beta_0 \geq 2 \max\{1, \max\{\alpha_i\} - 1\},
            \end{equation*}

            then again according to \eqref{eq:desired_ineq_final} desired inequality \eqref{eq:desired_ineq} will be fulfilled for all $i \in \overline{1, m}$ for all $k_0 \in \mathbb{N}$.
        \end{itemize}

        This finishes the proof.
    \end{proof}
\addtocontents{toc}{\protect\setcounter{tocdepth}{2}}

    \section{Full versions of Corollaries from Section \ref{section:main_results}}

        \subsection{Full version of Corollary \ref{cor:FW_nonstoch}}
        \label{appendix:FW_nonstoch_full}
        \begin{corollary}
            Under the conditions of Theorem \ref{theorem:FW_nonstoch}, choosing $\gamma_k, \tau, \Delta$ as 

            \begin{equation*}
                \gamma_k = \frac{4}{k + 8d}, \text{ }
                \tau = \mathcal{O} \left(\frac{\varepsilon}{\sqrt{d} L D} \right), \text{ }
                \Delta = \mathcal{O} \left( \frac{\varepsilon^2}{d L D^2}\right),
            \end{equation*}
            in order to achieve an $\varepsilon$-approximate solution (in terms of $\E[f(x^k) - f^*] \leq \varepsilon$) it takes

            \begin{equation*}
                \mathcal{O} \left( \frac{d \max\{L D^2, F_0\}}{\varepsilon} \right) \text{ iterations of Algorithm \ref{alg:FW}.}
            \end{equation*}
        \end{corollary}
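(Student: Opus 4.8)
The plan is to start from the convergence guarantee established in Theorem \ref{theorem:FW_nonstoch}, namely
\begin{equation*}
\expect{f(x^N) - f^*} = \mathcal{O}\Big(\frac{d\max\{LD^2, F_0\}}{N + 8d} + \sqrt{d}LD\tau + \frac{\sqrt{d}\Delta D}{\tau}\Big),
\end{equation*}
and to tune the three free parameters $\tau$, $\Delta$, and $N$ so that each of the three additive terms on the right-hand side is individually of order $\varepsilon$; summing them then yields $\expect{f(x^N) - f^*} = \mathcal{O}(\varepsilon)$, which is the desired $\varepsilon$-approximate guarantee.

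First I would handle the smoothing-bias term $\sqrt{d}LD\tau$. Requiring $\sqrt{d}LD\tau = \mathcal{O}(\varepsilon)$ forces $\tau = \mathcal{O}\big(\varepsilon/(\sqrt{d}LD)\big)$, which is exactly the stated choice of $\tau$; note this step decouples from the other two parameters. Next, I would substitute this value of $\tau$ into the oracle-noise term $\sqrt{d}\Delta D/\tau$. With $\tau = \Theta\big(\varepsilon/(\sqrt{d}LD)\big)$ one obtains $\sqrt{d}\Delta D/\tau = \Theta\big(dLD^2\Delta/\varepsilon\big)$, so demanding this to be $\mathcal{O}(\varepsilon)$ gives the condition $\Delta = \mathcal{O}\big(\varepsilon^2/(dLD^2)\big)$, matching the stated choice of $\Delta$. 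The only subtlety here is that the admissible $\Delta$ is coupled to the already-fixed $\tau$; fixing the parameters in this order (first $\tau$ from the bias term, then $\Delta$ from the noise term) keeps the argument clean and avoids a circular dependence.

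Finally, I would control the optimization term $d\max\{LD^2, F_0\}/(N+8d)$. Requiring it to be $\mathcal{O}(\varepsilon)$ gives $N + 8d = \Omega\big(d\max\{LD^2, F_0\}/\varepsilon\big)$, hence the iteration complexity $N = \mathcal{O}\big(d\max\{LD^2, F_0\}/\varepsilon\big)$ claimed in the corollary. I expect no genuine obstacle in this argument, since it is pure parameter balancing layered on top of Theorem \ref{theorem:FW_nonstoch}. The two things worth watching are (i) tracking the hidden constants carefully so that the three separate $\mathcal{O}(\varepsilon)$ contributions really do sum to $\mathcal{O}(\varepsilon)$ (for instance by budgeting $\varepsilon/3$ to each), and (ii) confirming that the prescribed step size $\gamma_k = 4/(k+8d)$ continues to satisfy the hypotheses of the Recursion Lemma (Lemma \ref{lem:recursion}) invoked inside the proof of Theorem \ref{theorem:FW_nonstoch}, so that the cited convergence bound remains valid for precisely this choice of $\gamma_k$.
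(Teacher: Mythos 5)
Your proposal is correct and matches the paper's own proof essentially verbatim: both start from the bound in Theorem \ref{theorem:FW_nonstoch}, fix $\tau$ from the smoothing term, then fix $\Delta$ from the noise term (the paper likewise writes $\Delta = \mathcal{O}(\varepsilon\tau/(\sqrt{d}D)) = \mathcal{O}(\varepsilon^2/(dLD^2))$, making the same $\tau$-then-$\Delta$ ordering explicit), and finally read off $N$ from the optimization term. Your extra care about constant budgeting and about $\gamma_k$ satisfying the Recursion Lemma hypotheses is sound but not something the paper dwells on.
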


        We can re-write this corollary in terms of $\varepsilon(\Delta)$.
        \begin{corollary}
        \label{cor:FW_nonstoch_1}
            Under the conditions of Theorem \ref{theorem:FW_nonstoch}, choosing $\gamma_k, \tau, \varepsilon$ as

            \begin{equation*}
                \gamma_k = \frac{4}{k + 8d}, \text{ }
                \tau = \mathcal{O} \left( \sqrt{\Delta / L}\right), \text{ }
                \varepsilon = \mathcal{O} \left( \sqrt{d L D^2 \Delta}\right),
            \end{equation*}

            in order to achieve an $\varepsilon$-approximate solution (in terms of $\E[f(x^k) - f^*] \leq \varepsilon$) it takes

            \begin{equation*}
                \mathcal{O} \left( \frac{\sqrt{d} \max\{L D^2, F_0\}}{\sqrt{L D^2 \Delta}} \right) \text{ iterations of Algorithm \ref{alg:FW}.}
            \end{equation*}
        \end{corollary}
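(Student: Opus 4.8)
The plan is to obtain this statement as a direct reparametrization of the convergence rate in Theorem~\ref{theorem:FW_nonstoch}, rather than re-deriving anything from scratch; indeed, Corollary~\ref{cor:FW_nonstoch_1} is nothing but the preceding full version of Corollary~\ref{cor:FW_nonstoch} read ``in reverse,'' trading the prescription $\Delta(\varepsilon)$ for $\varepsilon(\Delta)$. Recall that for $\gamma_k = 4/(k+8d)$ the theorem gives
\begin{equation*}
\expect{f(x^N) - f^*} = \mathcal{O}\Big(\frac{d\max\{LD^2, F_0\}}{N + 8d} + \sqrt{d}\, L D \tau + \frac{\sqrt{d}\,\Delta D}{\tau}\Big),
\end{equation*}
so the error splits into an optimization term that vanishes with $N$ and two smoothing/noise terms that depend only on $\tau$ once the oracle noise level $\Delta$ is fixed.

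First I would balance the two $\tau$-dependent terms. Minimizing $\sqrt{d}\,LD\,\tau + \sqrt{d}\,\Delta D/\tau$ over $\tau>0$ is a textbook $a\tau + b/\tau$ trade-off: the first-order condition yields $\tau = \sqrt{\Delta/L}$, which is precisely the prescribed smoothing parameter, and substituting back gives a minimal value of order $2\sqrt{d L D^2 \Delta} = \mathcal{O}(\sqrt{d L D^2 \Delta})$. Since $\Delta$ is imposed by nature, this is an irreducible error floor that no choice of $N$ or $\tau$ can beat. Accordingly I would set the target accuracy equal to this floor, $\varepsilon = \mathcal{O}(\sqrt{d L D^2 \Delta})$, which is exactly the announced $\varepsilon(\Delta)$ dependence.

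Next I would force the optimization term to be at most of the same order $\varepsilon$. Solving $d\max\{LD^2, F_0\}/(N+8d) \leq \mathcal{O}(\varepsilon)$ for $N$ gives $N = \mathcal{O}(d\max\{LD^2, F_0\}/\varepsilon)$, i.e. the iteration count of Corollary~\ref{cor:FW_nonstoch}. The final step is purely algebraic: substitute $\varepsilon = \sqrt{d L D^2 \Delta}$ and simplify via $d/\sqrt{d} = \sqrt{d}$,
\begin{equation*}
N = \mathcal{O}\Big(\frac{d\max\{LD^2, F_0\}}{\sqrt{d L D^2 \Delta}}\Big) = \mathcal{O}\Big(\frac{\sqrt{d}\,\max\{LD^2, F_0\}}{\sqrt{L D^2 \Delta}}\Big),
\end{equation*}
which is the claimed bound.

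There is no genuine obstacle here; the result is a bookkeeping reformulation. The only point requiring care is checking that all three displayed quantities remain mutually consistent under the inversion. Concretely, the full version of Corollary~\ref{cor:FW_nonstoch} selects $\Delta = \mathcal{O}(\varepsilon^2/(dLD^2))$, which I must solve for $\varepsilon$ to recover $\varepsilon = \mathcal{O}(\sqrt{d L D^2 \Delta})$, and I should then verify that its prescribed $\tau = \mathcal{O}(\varepsilon/(\sqrt{d}LD))$ collapses to $\tau = \mathcal{O}(\sqrt{\Delta/L})$ under this substitution (since $\sqrt{d L D^2 \Delta}/(\sqrt{d}LD) = \sqrt{\Delta/L}$). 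This consistency check is the whole content of the proof.
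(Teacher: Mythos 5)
Your proposal is correct and follows essentially the same route as the paper: the paper proves Corollary \ref{cor:FW_nonstoch} by bounding each term of the rate in Theorem \ref{theorem:FW_nonstoch} by $\varepsilon$ (yielding $N$, $\tau(\varepsilon)$, $\Delta(\varepsilon)$), and obtains Corollary \ref{cor:FW_nonstoch_1} as exactly the inversion $\varepsilon(\Delta)$ that you carry out, with your $\tau$-balancing and consistency check ($\tau = \varepsilon/(\sqrt{d}LD) = \sqrt{\Delta/L}$, $N = d\max\{LD^2,F_0\}/\varepsilon = \sqrt{d}\max\{LD^2,F_0\}/\sqrt{LD^2\Delta}$) recovering the stated quantities.
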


    \subsection{Full version of Corollaries \ref{cor:GD} and \ref{cor:GD_PL}}
    \label{appendix:corr_GD_full}

        \begin{corollary}[Full version of Corollary \ref{cor:GD}]
            Under the conditions of Theorem \ref{theorem:GD}, choosing $\gamma, \tau, \Delta$ as

            \begin{equation*}
                \gamma \equiv \frac{1}{4 d L}, \text{ }
                \tau = \mathcal{O} \left( \frac{\varepsilon}{\sqrt{d} L} \right), \text{ }
                \Delta = \mathcal{O} \left( \frac{\varepsilon^2}{d L} \right),
            \end{equation*}

            in order to achieve an $\varepsilon$-approximate solution (in terms of $\E[\|[\nabla f(\widehat{x}_N)\|^2] \leq \varepsilon^2$, where $\widehat{x}_N$ is chosen uniformly from $\{ x^k \}_{k = 0}^{N}$) it takes

            \begin{equation*}
                \mathcal{O} \left(\frac{d L \Phi_0}{\varepsilon^2} \right) \text{ iterations of Algorithm \ref{alg:GD}.}
            \end{equation*}

        \end{corollary}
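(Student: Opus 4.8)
The plan is to substitute the prescribed values of $\tau$ and $\Delta$ into the convergence bound of Theorem \ref{theorem:GD} and then read off the required number of iterations from the remaining term. Recall that with $\gamma \equiv 1/(4dL)$ Theorem \ref{theorem:GD} provides
\begin{equation*}
    \E[\norms{\nabla f(\widehat{x}_N)}^2] = \mathcal{O}\left( \frac{dL\Phi_0}{N+1} + dL^2\tau^2 + \frac{d\Delta^2}{\tau^2} \right),
\end{equation*}
so it suffices to force each of the three summands to be of order $\varepsilon^2$ and then solve the resulting constraint on $N$.

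First I would handle the smoothing term: choosing $\tau = \mathcal{O}(\varepsilon/(\sqrt{d}L))$ immediately yields $dL^2\tau^2 = \mathcal{O}(\varepsilon^2)$. Next, with this value of $\tau$ we have $\tau^2 = \mathcal{O}(\varepsilon^2/(dL^2))$, so the noise term becomes $d\Delta^2/\tau^2 = \mathcal{O}(d^2L^2\Delta^2/\varepsilon^2)$; requiring this to be $\mathcal{O}(\varepsilon^2)$ forces $\Delta^2 = \mathcal{O}(\varepsilon^4/(d^2L^2))$, i.e. $\Delta = \mathcal{O}(\varepsilon^2/(dL))$, which is exactly the prescribed value. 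After these two substitutions the bound collapses to $\E[\norms{\nabla f(\widehat{x}_N)}^2] = \mathcal{O}(dL\Phi_0/(N+1) + \varepsilon^2)$, and it then remains only to set $dL\Phi_0/(N+1) \leq \varepsilon^2$, which gives $N = \mathcal{O}(dL\Phi_0/\varepsilon^2)$ iterations, the claimed complexity.

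The one point requiring care---and the only genuine obstacle---is the dependence of $\Phi_0 = f(x^0) - f^* + d\gamma\norms{h^0 - \nabla f(x^0)}^2$ on $\tau$ and $\Delta$ through the initialization $h^0 = \widetilde{\nabla}f_\delta(x^0)$, which must be checked so that substituting the $\varepsilon$-dependent choices of $\tau,\Delta$ does not secretly inflate the rate. I would invoke Lemma \ref{lemma:tilde_vs_notilda} to bound $\norms{h^0 - \nabla f(x^0)}^2 \leq dL^2\tau^2 + 2d\Delta^2/\tau^2$; with the chosen $\tau,\Delta$ this is $\mathcal{O}(\varepsilon^2)$, and since $d\gamma = 1/(4L)$ the memory term contributes only $\mathcal{O}(\varepsilon^2/L)$ to $\Phi_0$. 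Hence $\Phi_0$ is effectively a constant of order $f(x^0) - f^*$ together with a lower-order $\varepsilon$-dependent piece, so it can legitimately be retained in the final complexity estimate $\mathcal{O}(dL\Phi_0/\varepsilon^2)$ without altering the leading-order $1/\varepsilon^2$ behaviour. The remainder of the argument is pure constant bookkeeping.
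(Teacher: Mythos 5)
Your proposal is correct and follows essentially the same route as the paper's proof: substitute $\tau = \mathcal{O}\left(\varepsilon/(\sqrt{d}L)\right)$ and $\Delta = \mathcal{O}\left(\varepsilon^2/(dL)\right)$ into the bound of Theorem \ref{theorem:GD} so that the smoothing and noise terms become $\mathcal{O}(\varepsilon^2)$, then solve $dL\Phi_0/(N+1) \leq \varepsilon^2$ for $N$. Your extra verification that $\Phi_0$ is not secretly inflated by the $\varepsilon$-dependent initialization $h^0 = \widetilde{\nabla} f_\delta(x^0)$ (via Lemma \ref{lemma:tilde_vs_notilda}, contributing only $\mathcal{O}(\varepsilon^2/L)$ since $d\gamma = 1/(4L)$) is a detail the paper's proof leaves implicit, but it does not alter the argument.
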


        \begin{corollary}[Full version of Corollary \ref{cor:GD_PL}]
            Under the conditions of Theorem \ref{theorem:GD_PL}, choosing $\gamma, \tau, \Delta$ as

            \begin{equation*}
                \gamma \equiv \frac{1}{4 d L}, \text{ }
                \tau = \mathcal{O} \left( \frac{\sqrt{\varepsilon \mu}}{\sqrt{d} L} \right), \text{ }
                \Delta = \mathcal{O} \left( \frac{\varepsilon \mu}{d L} \right).
            \end{equation*}
\vspace{-1mm}
            in order to achieve an $\varepsilon$-approximate solution (in terms of $\expect{f(x^N) - f^*} \leq \varepsilon$) it takes
\vspace{-1mm}
            \begin{equation*}
                \mathcal{O} \left(\frac{d L }{\mu} \log\left[ \frac{F_0}{\varepsilon} \right] \right) \text{ iterations of Algorithm \ref{alg:GD}.}
            \end{equation*}
        \end{corollary}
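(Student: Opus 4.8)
The plan is to start directly from the convergence bound of Theorem \ref{theorem:GD_PL} and invert it, splitting the target accuracy $\varepsilon$ across the two error terms. Recall that with $\gamma \equiv 1/(4dL)$ the theorem yields
$$\expect{f(x^N) - f^*} = \mathcal{O}\left(F_0 \exp\left[-\frac{\mu N}{4dL}\right] + \frac{dL^2\tau^2 + d\Delta^2/\tau^2}{\mu}\right).$$
The strategy is to force each of the two summands to be at most of order $\varepsilon$, so that their sum is $\mathcal{O}(\varepsilon)$.

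First I would handle the smoothing-plus-noise term $\frac{dL^2\tau^2 + d\Delta^2/\tau^2}{\mu}$, which does not depend on $N$. Requiring $\frac{dL^2\tau^2}{\mu} = \mathcal{O}(\varepsilon)$ forces $\tau^2 = \mathcal{O}(\varepsilon\mu/(dL^2))$, i.e.\ $\tau = \mathcal{O}(\sqrt{\varepsilon\mu}/(\sqrt{d}L))$, which is exactly the stated choice. Substituting this value of $\tau$ into the second part of the term and requiring $\frac{d\Delta^2/\tau^2}{\mu} = \mathcal{O}(\varepsilon)$ gives $\Delta^2 = \mathcal{O}(\varepsilon\mu\tau^2/d) = \mathcal{O}(\varepsilon^2\mu^2/(d^2L^2))$, i.e.\ $\Delta = \mathcal{O}(\varepsilon\mu/(dL))$, again matching the stated choice. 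With these selections the $N$-independent term is guaranteed to be $\mathcal{O}(\varepsilon)$ for all iterations.

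It remains to make the exponential term $F_0\exp[-\mu N/(4dL)]$ smaller than $\varepsilon$. Solving $F_0\exp[-\mu N/(4dL)] \leq \varepsilon$ for $N$ by taking logarithms gives $\frac{\mu N}{4dL} \geq \log(F_0/\varepsilon)$, hence $N \geq \frac{4dL}{\mu}\log(F_0/\varepsilon)$, which yields the claimed iteration complexity $\mathcal{O}\left(\frac{dL}{\mu}\log[F_0/\varepsilon]\right)$. Combining the two halves gives $\expect{f(x^N) - f^*} \leq \varepsilon$, completing the argument.

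Since every step is an explicit inversion of the bound in Theorem \ref{theorem:GD_PL}, there is no genuine obstacle; the only point requiring care is the coupling between $\tau$ and $\Delta$ through the ratio $\Delta^2/\tau^2$. One must substitute the already-fixed value of $\tau$ before solving for $\Delta$ (rather than choosing them independently), so that both contributions to the noise term are simultaneously $\mathcal{O}(\varepsilon)$; this is what produces the stated scaling $\Delta = \mathcal{O}(\varepsilon\mu/(dL))$ rather than a weaker bound.
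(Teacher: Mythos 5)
Your proposal is correct and follows essentially the same route as the paper: invert the bound of Theorem \ref{theorem:GD_PL}, pick $\tau$ so the $dL^2\tau^2/\mu$ term is $\mathcal{O}(\varepsilon)$, substitute that $\tau$ to pick $\Delta$ (the paper writes this as $\Delta = \mathcal{O}(\sqrt{\varepsilon\mu}\,\tau/\sqrt{d}) = \mathcal{O}(\varepsilon\mu/(dL))$), and then solve the exponential term for $N$. The only difference is that you spell out the algebra the paper leaves implicit.
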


    \subsection{Full version of Corollary \ref{cor:FW}}
    \label{appendix:corr_FW_stoch_full}

    \begin{corollary}
            Under the conditions of Theorem \ref{theorem:FW}, choosing $\gamma_k, \eta_k, \tau, \Delta$ as

            \begin{equation*}
            \begin{split}
                &\gamma_k = \frac{4}{k + 8d^{3/2}}, \text{ }
                \eta_k = \frac{4}{(k + 8d^{3/2})^{2/3}}, \text{ }
                \\&\tau = \mathcal{O} \left(\frac{\varepsilon}{\sqrt{d} L D} \right), \text{ }
                \Delta = \mathcal{O} \left( \frac{\varepsilon^2}{d L D^2}\right),
            \end{split}
            \end{equation*}

            in order to achieve an $\varepsilon$-approximate solution (in terms of $\E[f(x^N) - f^*] \leq \varepsilon)$ it takes
            
            \begin{equation*}
            \begin{split}
                \mathcal{O} \Bigg( \max\Bigg\{ \left[ \frac{L D^2 + d\sigma_{\nabla} D + \sqrt{d} (f(x^0) - f^*)}{\varepsilon}\right]^3; 
                \frac{d^{9/2} \sigma_f^3 L^3D^6}{\varepsilon^6} \Bigg\}\Bigg) \\ \text{ iterations of Algorithm \ref{alg:FW_stoch}.}
            \end{split}
            \end{equation*}

            In the two point feedback case, $\sigma_f^2 = 0$ and the last equation takes form
            
            \begin{equation*}
                \mathcal{O} \left( \left[ \frac{L D^2 + d\sigma_{\nabla} D + \sqrt{d} (f(x^0) - f^*)}{\varepsilon}\right]^3 \right).
            \end{equation*}
        \end{corollary}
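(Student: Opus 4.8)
The plan is to treat this corollary as a substitution-and-optimization step on top of the convergence bound of Theorem \ref{theorem:FW}. That bound writes $F_N = \E[f(x^N) - f^*]$ as a sum of three groups of terms: an optimization term decaying like $(N + 8d^{3/2})^{-1/3}$ with numerator $LD^2 + d\sigma_f D/\tau + d\sigma_\nabla D + \sqrt{d} F_0$ (where $F_0 = f(x^0) - f^*$), a smoothing-bias term $\sqrt{d} L D \tau$, and a noise term $\sqrt{d}\Delta D/\tau$. To guarantee $F_N \leq \varepsilon$, I would force each of the three groups to be at most of order $\varepsilon$ separately, which suffices since they add.

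First I would dispose of the two $\tau$-dependent constant terms. The bias term obeys $\sqrt{d} L D \tau \leq \varepsilon$ exactly when $\tau = \mathcal{O}(\varepsilon/(\sqrt{d} L D))$, which is the stated choice. Plugging this $\tau$ into the noise term gives $\sqrt{d}\Delta D/\tau = \mathcal{O}(d L D^2 \Delta/\varepsilon)$, so requiring it to be $\mathcal{O}(\varepsilon)$ forces $\Delta = \mathcal{O}(\varepsilon^2/(d L D^2))$, again matching the statement. Both parameter choices are thus dictated by balancing, not guessed.

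The key step is the optimization term. Substituting $\tau = \mathcal{O}(\varepsilon/(\sqrt{d} L D))$ into the coupled numerator summand $d\sigma_f D/\tau$ turns it into $\mathcal{O}(d^{3/2}\sigma_f L D^2/\varepsilon)$, so the effective numerator is $A := LD^2 + d\sigma_\nabla D + \sqrt{d} F_0 + d^{3/2}\sigma_f L D^2/\varepsilon$. Demanding $A/(N + 8d^{3/2})^{1/3} \leq \varepsilon$ and absorbing the lower-order $8d^{3/2}$ yields $N = \mathcal{O}((A/\varepsilon)^3)$. I would then split $A = A_1 + A_2$ with $A_1 = LD^2 + d\sigma_\nabla D + \sqrt{d} F_0$ and $A_2 = d^{3/2}\sigma_f L D^2/\varepsilon$, and use $(a + b)^3 = \mathcal{O}(a^3 + b^3)$ together with $a^3 + b^3 = \Theta(\max\{a^3, b^3\})$ to rewrite the count as the stated maximum. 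Computing $(A_2/\varepsilon)^3 = \mathcal{O}(d^{9/2}\sigma_f^3 L^3 D^6/\varepsilon^6)$ gives the second branch, while setting $\sigma_f = 0$ in the TPF case removes $A_2$ and leaves only $(A_1/\varepsilon)^3$.

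The only place that demands care — and the main conceptual obstacle — is tracking that the $\sigma_f$ summand in the numerator is itself $\tau$-dependent, so substituting the chosen $\tau$ injects an extra factor of $1/\varepsilon$. This coupling is precisely why the variance-of-the-function contribution scales as $\varepsilon^{-6}$ rather than $\varepsilon^{-3}$ and must sit in a separate branch of the maximum; every other term is insensitive to it. The remainder is constant-bookkeeping absorbed into the $\mathcal{O}(\cdot)$ notation.
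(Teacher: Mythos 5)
Your proposal is correct and follows essentially the same route as the paper's own proof: substitute the bound of Theorem \ref{theorem:FW}, choose $\tau$ and $\Delta$ to kill the bias and noise terms at level $\varepsilon$, and solve the remaining $(N+8d^{3/2})^{-1/3}$ term for $N$, with the $d\sigma_f D/\tau$ summand picking up an extra $1/\varepsilon$ and producing the $\varepsilon^{-6}$ branch of the maximum. The paper's proof merely states the resulting choices without spelling out the balancing; your write-up makes the same computation explicit.
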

        
    \section{Proof of converge rate of \texttt{JAGUAR}. Deterministic and stochastic cases.} \label{appendix:JAGUAR}

    \subsection{Result for the full-approximation (stochastic case)}
    \label{appendix:subsec_full_nonstoch}
    \begin{lemma}
        \label{lemma:tilde_vs_notilda}            
        Under Assumptions \ref{ass:smooth}, \ref{ass:bounded}, \ref{ass:sigma_nabla} and \ref{ass:sigma_f} in the OPF case \eqref{eq:opf} the following inequality holds

        \begin{equation}
        \label{eq:diff_full}
            \expect{\norms{\widetilde{\nabla}f_{\delta}(x, \xi^+_1, ... , \xi_d^-) - \nabla f(x)}^2} 
            \leq d L^2 \tau^2 
            + \frac{8 d \sigma_f^2}{\tau^2} 
            + 2 d \sigma_{\nabla}^2 + \frac{2 d \Delta^2}{\tau^2}.
        \end{equation}

        In the case of two point feedback $\sigma_f^2 = 0$ and in the deterministic case \eqref{eq:problem_nonstoch} $\sigma_\nabla^2 = \sigma_f^2 = 0$.
    \end{lemma}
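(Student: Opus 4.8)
The plan is to reduce the vector error to a coordinatewise sum via orthonormality of $\{e_i\}$, and then to bound each coordinate's contribution by inserting a carefully chosen chain of intermediate quantities so that every assumption is invoked exactly once. Writing $g_i := \tfrac{1}{2\tau}\big(f_\delta(x+\tau e_i,\xi_i^+) - f_\delta(x-\tau e_i,\xi_i^-)\big)$ for the scalar coefficient of $e_i$ in $\widetilde{\nabla}f_\delta(x,\xi_1^+,\dots,\xi_d^-)$, orthonormality gives
$$\norms{\widetilde{\nabla}f_\delta(x,\xi_1^+,\dots,\xi_d^-) - \nabla f(x)}^2 = \sum_{i=1}^d \big(g_i - \nabla_i f(x)\big)^2,$$
where $\nabla_i f(x) := \dotprod{\nabla f(x)}{e_i}$. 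It therefore suffices to control $\E\big[(g_i - \nabla_i f(x))^2\big]$ for each $i$ and sum, exploiting that distinct coordinates use independent samples $\xi_i^\pm$.

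The key step is a telescoping decomposition of $g_i - \nabla_i f(x)$ into four pieces, each matched to one assumption. After substituting $f_\delta = f + \delta$ I split the numerator as
$$f(x+\tau e_i,\xi_i^+) - f(x-\tau e_i,\xi_i^-) = \underbrace{\big[f(x+\tau e_i,\xi_i^+) - f(x-\tau e_i,\xi_i^+)\big]}_{\text{TPF difference at } \xi_i^+} + \underbrace{\big[f(x-\tau e_i,\xi_i^+) - f(x-\tau e_i,\xi_i^-)\big]}_{\text{fluctuation between the two samples}}.$$
Dividing by $2\tau$ writes $g_i - \nabla_i f(x)$ as the sum of: (i) the finite-difference bias $T_i - \nabla_i f(x,\xi_i^+)$, with $T_i$ the central difference at the single realization $\xi_i^+$; (ii) the gradient fluctuation $\nabla_i f(x,\xi_i^+) - \nabla_i f(x)$; (iii) the function-value fluctuation $S_i := \tfrac{1}{2\tau}\big(f(x-\tau e_i,\xi_i^+) - f(x-\tau e_i,\xi_i^-)\big)$; and (iv) the noise term $D_i := \tfrac{1}{2\tau}\big(\delta(x+\tau e_i,\xi_i^+) - \delta(x-\tau e_i,\xi_i^-)\big)$. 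Applying the squared-norm-of-the-sum inequality (Appendix \ref{axil:squared} with $n=4$, or iterated $n=2$ splits) bounds $(g_i - \nabla_i f(x))^2$ by a constant times the sum of the four squared pieces.

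Then I bound each piece by its matching assumption. For (i), $L(\xi_i^+)$-smoothness of $f(\cdot,\xi_i^+)$ gives the standard estimate $|T_i - \nabla_i f(x,\xi_i^+)| \le \tfrac{L(\xi_i^+)\tau}{2}$; squaring, taking expectations, summing over $i$ and using $L^2 = \E[L(\xi)^2]$ (Assumption \ref{ass:smooth}) yields $O(dL^2\tau^2)$. For (ii), each coordinate uses an independent $\xi_i^+$ with the same marginal, so the coordinatewise second moments reassemble into the full variance, $\sum_i \E\big[(\nabla_i f(x,\xi_i^+) - \nabla_i f(x))^2\big] = \E_\xi\big[\norms{\nabla f(x,\xi)-\nabla f(x)}^2\big] \le \sigma_\nabla^2$ by Assumption \ref{ass:sigma_nabla}. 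For (iii), centering both summands at $f(x-\tau e_i)$ and using independence of $\xi_i^+,\xi_i^-$, Assumption \ref{ass:sigma_f} gives $\E[S_i^2] = O(\sigma_f^2/\tau^2)$, summing to $O(d\sigma_f^2/\tau^2)$. For (iv), Assumption \ref{ass:bounded} bounds each noise factor by $\Delta^2$, so $\E[D_i^2] = O(\Delta^2/\tau^2)$ with sum $O(d\Delta^2/\tau^2)$. Adding the four contributions gives \eqref{eq:diff_full}.

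The two stated specializations then fall out immediately: in the TPF case $\xi_i^+ = \xi_i^-$, so $S_i \equiv 0$ and the $\sigma_f^2$ contribution disappears, while in the fully deterministic setting both $\sigma_\nabla$ and $\sigma_f$ vanish, recovering the bound quoted in the Discussion after Lemma \ref{lemma:h_vs_nablaf_nonstoch}. I expect the main obstacle to be organizing the telescoping so that the \emph{different} realizations at the two evaluation points produce exactly the function-variance term $\sigma_f^2/\tau^2$ (which must collapse to zero under TPF) rather than an uncontrolled cross term, while the gradient-variance term $\sigma_\nabla^2$ emerges cleanly from independence and identical marginals across coordinates. Tracking the numerical constants through the repeated $(a+b)^2 \le 2a^2+2b^2$ splits is the only other delicate point, but it is routine.
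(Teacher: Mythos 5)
Your proposal is correct and follows the same overall strategy as the paper's proof: reduce to a per-coordinate bound via orthonormality of $\{e_i\}$, split each coordinate error into pieces matched one-to-one with the four assumptions, and note that the TPF and deterministic specializations follow because the corresponding pieces vanish identically. The difference is in how the split is arranged. The paper uses a symmetric Taylor decomposition around the center point $x$: two second-order remainders $f(x \pm \tau e_i, \xi_i^\pm) - f(x,\xi_i^\pm) \mp \tau\langle\nabla f(x,\xi_i^\pm), e_i\rangle$, the sample fluctuation $f(x,\xi_i^+) - f(x,\xi_i^-)$ evaluated at $x$ itself, and the inner product $\langle\nabla f(x,\xi_i^+) + \nabla f(x,\xi_i^-) - 2\nabla f(x), \tau e_i\rangle$ handled by Cauchy--Schwarz. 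Your telescoping instead anchors the central difference at the single realization $\xi_i^+$ and pushes the sample fluctuation to the point $x - \tau e_i$. Two consequences: (i) you invoke Assumption \ref{ass:sigma_f} at $x - \tau e_i$, which may lie slightly outside $Q$ --- strictly a stronger requirement than the assumption as stated, though the paper already commits the same sin for Assumptions \ref{ass:smooth} and \ref{ass:bounded} at the points $x \pm \tau e_i$, so this is cosmetic; (ii) your coordinatewise treatment of the gradient fluctuation, summing $\E[(\nabla_i f(x,\xi_i^+) - \nabla_i f(x))^2]$ over $i$ and using identical marginals, yields a dimension-free $\sigma_\nabla^2$ contribution, which is actually tighter than the $2d\sigma_\nabla^2$ in \eqref{eq:diff_full} (the paper bounds the full-vector inner product per coordinate by Cauchy--Schwarz, paying a factor of $d$). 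Your constants on the remaining terms do not exactly reproduce \eqref{eq:diff_full} --- a flat four-way split gives $4d\Delta^2/\tau^2$ in place of $2d\Delta^2/\tau^2$ --- but splitting off the noise term first with a factor of $2$, as the paper does, recovers the stated constant, and in any case all downstream uses of the lemma are in $\mathcal{O}(\cdot)$ form, so nothing breaks.
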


    \begin{proof}
        We start by writing out a definition of approximation $\widetilde{\nabla}f_{\delta}(x, \xi_{\overline{1, d}}^\pm)$:
        
        \begin{align*}
            &\expect{\norms{\widetilde{\nabla}f_{\delta}(x, \xi_{\overline{1, d}}^\pm) - \nabla f(x)}^2}
            \\&= 
            \expect{\norms{\sum\limits_{i = 1}^d \frac{f_{\delta}(x + \tau e_i, \xi^+_i) - f_{\delta}(x - \tau e_i, \xi^-_i)}{2 \tau}e_i - \nabla f(x)}^2}
            \\&= 
            \expect{\norms{\sum\limits_{i = 1}^d \left(\frac{f_{\delta}(x + \tau e_i, \xi^+_i) - f_{\delta}(x - \tau e_i, \xi^-_i)}{2 \tau} - \dotprod{\nabla f(x)}{e_i} \right) e_i}^2}
            \\&\overset{(\star)}{=}
            \sum\limits_{i = 1}^d \expect{\norms{\left(\frac{f_{\delta}(x + \tau e_i, \xi^+_i) - f_{\delta}(x - \tau e_i, \xi^-_i)}{2 \tau} - \dotprod{\nabla f(x)}{e_i} \right) e_i}^2}
            \\&=
            \sum\limits_{i = 1}^d \expect{\left|\frac{f_{\delta}(x + \tau e_i, \xi^+_i) - f_{\delta}(x - \tau e_i, \xi^-_i)}{2 \tau} - \dotprod{\nabla f(x)}{e_i} \right|^2} .
    \end{align*}

    The $(\star)$ equality holds since $\dotprod{e_i}{e_j} = 0$ if $i \neq j$. Now let's estimate the value under the summation:

    \begin{equation*}
    \begin{split}
        &\expect{\left|\frac{f_{\delta}(x + \tau e_i, \xi^+_i) - f_{\delta}(x - \tau e_i, \xi^-_i)}{2 \tau} - \dotprod{\nabla f(x)}{e_i } \right|^2} 
        \\&= 
        \mathbb{E}\Bigg[ \Bigg|\frac{f(x + \tau e_i, \xi^+_i) - f(x - \tau e_i, \xi^-_i)}{2 \tau} - \dotprod{\nabla f(x)}{e_i}
        \\&+ 
        \frac{\delta(x + \tau e_i, \xi^+_i) - \delta(x - \tau e_i, \xi^-_i)}{2 \tau}\Bigg|^2 \Bigg]
        \\&\overset{\ref{axil:squared}}{\leq}
        \frac{1}{2 \tau^2} \underbrace{\expect{\left|f(x + \tau e_i, \xi^+_i) - f(x - \tau e_i, \xi^-_i) - \dotprod{\nabla f(x)}{2 \tau e_i} \right|^2}}_{\circledOne}
        \\&\quad\quad+ 
        \frac{2 \Delta^2}{\tau^2} .
    \end{split}
    \end{equation*}

        Last inequality holds since noise is bounded . Consider $\circledOne$. Using \ref{axil:squared} with $n = 4$ we get:

        \begin{equation}
        \label{eq:tmp_lemma_zlp}
        \begin{split}
            &\expect{\left| f(x + \tau e_i, \xi^+_i) - f(x - \tau e_i, \xi^-_i) - \dotprod{\nabla f(x)}{2 \tau e_i} \right|^2}
            \\&\leq
            4 \expect{\left| f(x + \tau e_i, \xi^+_i) - f(x, \xi^+_i) - \dotprod{\nabla f(x, \xi^+_i)}{ \tau e_i} \right|^2}
            \\&+
            4 \expect{\left|- f(x - \tau e_i, \xi^-_i) + f(x, \xi^-_i) + \dotprod{\nabla f(x, \xi^-_i)}{-\tau e_i} \right|^2}
            \\&+
            4 \expect{\left| f(x, \xi^+_i) - f(x, \xi^-_i) \right|^2}
            \\&+ 
            4 \expect{\left| \dotprod{\nabla f(x, \xi^+_i) + \nabla f(x, \xi^-_i) - 2 \nabla f(x)}{\tau e_i} \right|^2} .
        \end{split}
        \end{equation}

        Let's evaluate all these four components separately. Functions $f(x, \xi^+_i)$ and $f(x, \xi^-_i)$ are $L(\xi^{\pm}_i)$-smooth, therefore we have estimates for first and second:
        \begin{equation}
        \label{eq:tmp_first_and_second_zlp}
        \begin{split}
            &\left| f(x + \tau e_i, \xi^+_i) - f(x, \xi^+_i) - \dotprod{\nabla f(x, \xi^+_i)}{ \tau e_i} \right| 
            \leq \frac{L}{2} \tau^2,
            \\&\left|- f(x - \tau e_i, \xi^-_i) + f(x, \xi^-_i) + \dotprod{\nabla f(x, \xi^-_i)}{-\tau e_i} \right| 
            \leq \frac{L}{2} \tau^2 .
        \end{split}
        \end{equation}

        If we consider TPF approximation \eqref{eq:tpf}, then third term in \eqref{eq:tmp_lemma_zlp} equals to zero, since $\xi^+_i = \xi^-_i$, if we consider the OPF case \eqref{eq:opf}, then we can obtain

        \begin{equation}
        \label{eq:tmp_third_zlp}
        \begin{split}
            \expect{\left| f(x, \xi^+_i) - f(x, \xi^-_i) \right|^2}
            &\leq 2\expect{\left| f(x, \xi^+_i) - f(x) \right|^2} 
            \\&+ 2\expect{\left| f(x, \xi^-_i) - f(x) \right|^2} \leq 4 \sigma_f^2 .
        \end{split}
        \end{equation}

        Consider the last point in \eqref{eq:tmp_lemma_zlp} and using Cauchy–Schwarz inequality \ref{axil:cauchy_schwarz} we can obtain:

        \begin{equation}
        \label{eq:tmp_four_zlp}
        \begin{split}
            \expect{\left| \dotprod{\nabla f(x, \xi^+_i) - \nabla f(x)}{\tau e_i} \right|^2}
            &\leq
            \expect{\norms{\nabla f(x, \xi^+_i) - \nabla f(x)}^2 \tau^2}
            \\&\leq \sigma_{\nabla}^2 \tau^2 .
        \end{split}
        \end{equation}

        Combining \eqref{eq:tmp_first_and_second_zlp}, \eqref{eq:tmp_third_zlp} and \eqref{eq:tmp_four_zlp} we obtain

        \begin{equation*}
        \begin{split}
            \expect{\norms{\widetilde{\nabla}f_{\delta}(x, \xi^+_1, \xi^-_1, ... , \xi_d^+, \xi_d^-) - \nabla f(x)}^2} 
            &\leq d L^2 \tau^2 
            + \frac{8 d \sigma_f^2}{\tau^2} 
            \\&+ 2 d \sigma_{\nabla}^2 + \frac{2 d \Delta^2}{\tau^2} .
        \end{split}
        \end{equation*}

        In the case of two point feedback $\sigma_f^2 = 0$ and in the deterministic case \eqref{eq:problem_nonstoch} $\sigma_\nabla^2 = \sigma_f^2 = 0$. This finishes the proof.
    \end{proof}

    \subsection{Result for the \texttt{JAGUAR} approximation (Algorithm \ref{alg:JAGUAR_nonstoch}) (stochastic case)}
    \label{appendix:subsec_jaguar_nonstoch}
    \begin{lemma}
        \label{lemma:h_vs_nablaf}
        Under Assumptions \ref{ass:smooth}, \ref{ass:bounded}, \ref{ass:sigma_nabla} and \ref{ass:sigma_f} in the OPF case \eqref{eq:opf} the following inequality holds
        
        \begin{equation}
        \label{eq:h_vs_nabla}
        \begin{split}
        \expect{\norms{h^{k+1} - \nabla f(x^{k+1})}^2}
                &\leq
                \left(1 - \frac{1}{2 d}\right) \expect{\norms{h^{k} - \nabla f(x^{k})}^2}
                \\&+ 2d L^2 \expect{\norms{x^{k+1} - x^{k}}^2}
                \\&+ L^2 \tau^2 
                + \frac{8 \sigma_f^2}{\tau^2} 
                + 2 \sigma_{\nabla}^2 + \frac{2 \Delta^2}{\tau^2} .
        \end{split}
        \end{equation}
    
        In the case of two point feedback $\sigma^2_f = 0$ and in the deterministic case \eqref{eq:problem_nonstoch} $\sigma_\nabla^2 = \sigma_f^2 = 0$.
    \end{lemma}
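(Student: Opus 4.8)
The plan is to control the error $H_{k+1}=\expect{\norms{h^{k+1}-\nabla f(x^{k+1})}^2}$ by separating the drift of the true gradient induced by the step $x^k\to x^{k+1}$ from the single-coordinate update of $h$, the latter analysed conditionally on the history $\mathcal{F}_k$ (under which $x^k$ and $h^k$ are fixed). First I would insert $\pm\nabla f(x^k)$ and apply the Fenchel--Young inequality (\ref{axil:fenchel_young}) with a free parameter $\beta>0$:
\begin{equation*}
    \norms{h^{k+1}-\nabla f(x^{k+1})}^2 \leq (1+\beta)\norms{h^{k+1}-\nabla f(x^k)}^2 + \left(1+\tfrac{1}{\beta}\right)\norms{\nabla f(x^{k+1})-\nabla f(x^k)}^2 .
\end{equation*}
The last term is bounded pointwise by $L$-smoothness (Assumption~\ref{ass:smooth}) as $(1+1/\beta)L^2\norms{x^{k+1}-x^k}^2$, which will become the claimed $2dL^2\expect{\norms{x^{k+1}-x^k}^2}$ once $\beta$ is taken of order $1/d$.

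The core of the argument is the conditional expectation of $\norms{h^{k+1}-\nabla f(x^k)}^2$. Since line~\ref{line:h^k} replaces only the $i_k$-th entry of $h^k$ by the fresh scalar finite difference along $i_k$, I would write, with $u_i$ denoting that scalar,
\begin{equation*}
    \norms{h^{k+1}-\nabla f(x^k)}^2 = \norms{h^k-\nabla f(x^k)}^2 - \big(h^k_{i_k}-\nabla_{i_k} f(x^k)\big)^2 + \big(u_{i_k}-\nabla_{i_k} f(x^k)\big)^2 .
\end{equation*}
Averaging over the uniform coordinate $i_k$ turns the subtracted term into $-\tfrac1d\norms{h^k-\nabla f(x^k)}^2$, which is the self-correcting contraction, while the injected term averages to $\tfrac1d\Ek{\norms{\widetilde{\nabla}f_\delta(x^k)-\nabla f(x^k)}^2}$: the per-coordinate OPF errors, summed over $i$ with their independent samples $\xi^\pm$, reassemble precisely the full-approximation error bounded in Lemma~\ref{lemma:tilde_vs_notilda}. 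Hence $\Ek{\norms{h^{k+1}-\nabla f(x^k)}^2}=(1-\tfrac1d)\norms{h^k-\nabla f(x^k)}^2+\tfrac1d\Ek{\norms{\widetilde{\nabla}f_\delta(x^k)-\nabla f(x^k)}^2}$.

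To conclude, I would take total expectation by the tower property and invoke Lemma~\ref{lemma:tilde_vs_notilda}, whose bound $dL^2\tau^2+8d\sigma_f^2/\tau^2+2d\sigma_\nabla^2+2d\Delta^2/\tau^2$ is exactly offset by the $\tfrac1d$ factor to produce the stated noise terms $L^2\tau^2+8\sigma_f^2/\tau^2+2\sigma_\nabla^2+2\Delta^2/\tau^2$. The parameter $\beta$ is then pinned down so that $(1+\beta)(1-\tfrac1d)\leq 1-\tfrac{1}{2d}$ and $(1+1/\beta)\leq 2d$ simultaneously; the choice $\beta=1/(2d-1)$ makes the movement coefficient equal to exactly $2dL^2$ while keeping the contraction at $1-\tfrac{1}{2d}$. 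Specialising $\sigma_f=0$ recovers TPF and $\sigma_\nabla=\sigma_f=0$ recovers the deterministic case of \eqref{eq:h_vs_nabla}.

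The main obstacle is the conditional coordinate averaging: one must handle the two independent randomness sources at step $k$ — the uniform coordinate $i_k$ and the stochastic pair $\xi^\pm_k$ — and recognise that the single replaced entry both erases the old error (yielding the $-\tfrac1d$ contraction) and injects the fresh finite-difference error whose coordinate sum is the full-approximation quantity of Lemma~\ref{lemma:tilde_vs_notilda}. The residual difficulty is purely bookkeeping: tuning $\beta\asymp 1/d$ so that the $(1+\beta)$ and $(1+1/\beta)$ inflations deposit the clean constants $1-\tfrac{1}{2d}$ and $2dL^2$ claimed in \eqref{eq:h_vs_nabla}.
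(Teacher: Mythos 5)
Your argument is mechanically sound and genuinely different from the paper's, but as tuned it proves \eqref{eq:h_vs_nabla} only up to an inflated noise constant. The paper never splits off the drift by a Young-type inequality applied to the whole error; it expands the error in one shot as
\begin{align*}
    h^{k}-\nabla f(x^{k})
    &=\left(I-e_ie_i^T\right)\left(h^{k-1}-\nabla f(x^{k-1})\right)
    +e_ie_i^T\left(\widetilde{\nabla}f_{\delta}(x^k)-\nabla f(x^k)\right)\\
    &\quad-\left(I-e_ie_i^T\right)\left(\nabla f(x^{k})-\nabla f(x^{k-1})\right),
\end{align*}
in which the drift already carries the projector $I-e_ie_i^T$. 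Since $e_ie_i^T\left(I-e_ie_i^T\right)=0$, the injected finite-difference error (the middle piece) is pointwise orthogonal to the other two, the only surviving cross term is between the first and third pieces, and Fenchel--Young with $\beta=2d$ is spent on that term alone. Hence the injected error contributes exactly $\frac{1}{d}\expect{\norms{\widetilde{\nabla}f_{\delta}(x^k)-\nabla f(x^k)}^2}$ with coefficient one, which together with Lemma \ref{lemma:tilde_vs_notilda} produces the stated constants $L^2\tau^2+8\sigma_f^2/\tau^2+2\sigma_{\nabla}^2+2\Delta^2/\tau^2$. In your route the factor $(1+\beta)$ multiplies all of $\norms{h^{k+1}-\nabla f(x^k)}^2$, hence also the injected noise: with your $\beta=1/(2d-1)$ the last four terms of \eqref{eq:h_vs_nabla} come out multiplied by $\frac{2d}{2d-1}$, and no positive $\beta$ repairs this inside your decomposition, since the noise would require $1+\beta\le 1$ while the drift requires $1+1/\beta\le 2d$. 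What you establish is \eqref{eq:h_vs_nabla} with the additive terms worse by a factor at most $2$ --- harmless for every downstream $\mathcal{O}(\cdot)$ statement, but not the literal claimed inequality.

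The reason you cannot do better is structural and worth recording. Your bookkeeping follows line \ref{line:h^k} of Algorithm \ref{alg:FW_stoch} faithfully: the coordinate overwritten in $h^{k+1}$ is a finite difference at the \emph{old} point $x^k$, so even the fresh coordinate is one step stale relative to $\nabla f(x^{k+1})$, and a cross term between the injected error and the drift is unavoidable. The paper's identity above instead presumes that the fresh coordinate of $h^{k}$ is evaluated at the same point $x^k$ as the gradient it is compared against --- an index shift relative to the algorithm --- and only under that reading does the drift lie entirely in the range of $I-e_ie_i^T$, which is what buys the coefficient-one noise term. In short, the two texts prove slightly different recursions: yours matches the algorithm exactly at the cost of the benign factor $\frac{2d}{2d-1}$, while the paper's keeps the stated constants by implicitly evaluating the fresh coordinate at the newer point.
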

    
    \begin{proof}
        Let us start by writing out a definition of $h^{k}$ using line \ref{line:h^k} of Algorithm \ref{alg:FW_stoch}

        \begin{equation*}
        \begin{split}
            &\expect{\norms{h^{k} - \nabla f(x^{k})}^2} 
            \\&=
            \expect{\norms{h^{k-1} + \widetilde{\nabla}_i f_{\delta}(x^k, \xi^+, \xi^-) - \dotprod{h^{k-1}}{e_i} e_i - \nabla f(x^{k})}^2}
            \\&=
            \mathbb{E}\Bigg[\Bigg\|
            \left(I - e_i e_i^T\right) \left(h^{k-1} - \nabla f(x^{k-1}) \right) 
            \\&\qquad\quad+ e_i e_i^T \left(\widetilde{\nabla} f_{\delta}(x^k, \xi^+, \xi^-, ..., \xi^+, \xi^-) - \nabla f(x^k) \right)
            \\&\qquad\quad- \left(I - e_i e_i^T\right) \left(\nabla f(x^{k}) - \nabla f(x^{k-1})\right)\Bigg\|^2\Bigg]
            \\&=
            \underbrace{\expect{\norms{\left(I - e_i e_i^T\right) \left(h^{k-1} - \nabla f(x^{k-1}) \right) }^2}}_{\circledOne}
            \\&+ \underbrace{\expect{\norms{e_i e_i^T \left(\widetilde{\nabla} f_{\delta}(x^k, \xi^+, \xi^-, ..., \xi^+, \xi^-) - \nabla f(x^k) \right)}^2}}_{\circledTwo}
            \\&+
            \underbrace{\expect{\norms{\left(I - e_i e_i^T\right) \left(\nabla f(x^{k}) - \nabla f(x^{k-1})\right)}^2}}_{\circledThree}
            \\&+
            \underbrace{\expect{2\dotprod{\left(I - e_i e_i^T\right) \left(h^{k-1} - \nabla f(x^{k-1}) \right)}{\left(I - e_i e_i^T\right) \left(\nabla f(x^{k}) - \nabla f(x^{k-1})\right)}}}_{\circledFour} .
        \end{split}
        \end{equation*}

        In the last equality the two remaining scalar products are zero, since  $e_i e_i^T \left(I - e_i e_i^T\right) = e_i^T e_i - e_i^T e_i = 0$. Consider the $\circledOne$. Using notation $v := h^{k-1} - \nabla f(x^{k-1})$ we obtain

        \begin{equation*}
        \begin{split}
            &\expect{\norms{\left(I - e_i e_i^T\right) \left(h^{k-1} - \nabla f(x^{k-1}) \right)}^2}
            =
            \expect{v^T\left(I - e_i e_i^T\right)^T \left(I - e_i e_i^T\right) v}
            \\&=
            \expect{v^T\left(I - e_i e_i^T\right) v}
            = \expect{\mathbb{E}_{k-1}\left[ v^T\left(I - e_i e_i^T\right) v \right]},
        \end{split}
        \end{equation*}
        where $\mathbb{E}_{k-1}[\cdot]$ is the conditional expectation with fixed randomness of all steps up to $k-1$. Since at step $k$ the vectors $e_i$ are generated independently, we obtain

        \begin{equation*}
        \begin{split}
            &\expect{\mathbb{E}_{k-1}\left[ v^T\left(I - e_i e_i^T\right) v \right]}
            =
            \expect{v^T\mathbb{E}_{k-1}\left[\left(I - e_i e_i^T\right) \right] v} 
            \\&= 
            \left(1 - \frac{1}{d}\right) \expect{\norms{h^{k-1} - \nabla f(x^{k-1})}^2} .
        \end{split}
        \end{equation*}

        Consider $\circledTwo$. Since we generate $i$ independently, $x^k$ is independent of the $e_i$ generated at step $k$, then we can apply the same technique as in estimation $\circledOne$:

        \begin{equation*}
        \begin{split}
            &\expect{\norms{e_i e_i^T \left(\widetilde{\nabla} f_\delta(x^k, \xi^+, \xi^-, ..., \xi^+, \xi^-) - \nabla f(x^k) \right)}^2} 
            \\&=
            \frac{1}{d} \expect{\norms{\widetilde{\nabla} f_\delta(x^k, \xi^+, \xi^-, ..., \xi^+, \xi^-) - \nabla f(x^k)}^2} .
        \end{split}
        \end{equation*}

        Using Lemma \ref{lemma:tilde_vs_notilda} we obtain 

        \begin{equation*}
            \frac{1}{d} \expect{\norms{\widetilde{\nabla} f_\delta(x^k, \xi^+, \xi^-, ..., \xi^+, \xi^-) - \nabla f(x^k)}^2}
            \leq
            L^2 \tau^2 
            + \frac{8 \sigma_f^2}{\tau^2} 
            + 2 \sigma_{\nabla}^2 + \frac{2 \Delta^2}{\tau^2} .
        \end{equation*}

        Consider $\circledThree$. Using the same technique as in estimation $\circledOne$:

        \begin{equation*}
            \expect{\norms{\left(I - e_i e_i^T\right) \left(\nabla f(x^{k}) - \nabla f(x^{k-1})\right)}^2} \leq \left(1 - \frac{1}{d}\right) L^2 \expect{\norms{x^k - x^{k-1}}^2} .
        \end{equation*}

        Consider $\circledFour$. Using Fenchel-Young inequality \ref{axil:fenchel_young} with $\beta = 2d$ we obtain

        \begin{equation*}
            \circledFour \leq \left(1 - \frac{1}{d}\right) \left(\frac{1}{2d} \expect{\norms{h^{k-1} - \nabla f(x^{k-1})}^2} + 2d L^2 \expect{\norms{x^k - x^{k-1}}^2}\right) .
        \end{equation*}

        Therefore it holds that

        \begin{equation*}
        \begin{split}
            \expect{\norms{h^{k} - \nabla f(x^{k})}^2}
            &\leq
            \left(1 - \frac{1}{2 d}\right) \expect{\norms{h^{k-1} - \nabla f(x^{k-1})}^2}
            \\&+ 2d L^2 \expect{\norms{x^k - x^{k-1}}^2}
            \\&+ L^2 \tau^2 
            + \frac{8 \sigma_f^2}{\tau^2} 
            + 2 \sigma_{\nabla}^2 + \frac{2 \Delta^2}{\tau^2} .
        \end{split}
        \end{equation*}

        This finishes the proof.
    \end{proof}

    \subsection{Result for the SAGA approximation (line \ref{line:rho^k} of Algorithm \ref{alg:FW})}

    \begin{lemma}
        \label{lemma:rho_vs_nablaf}
            Under Assumptions \ref{ass:smooth}, \ref{ass:bounded}, \ref{ass:sigma_nabla} and \ref{ass:sigma_f} in the OPF case \eqref{eq:opf} the following inequality holds
            
            \begin{equation*}
            \begin{split}
                \expect{\norms{\rho^k - \nabla f(x^k)}^2}
                &\leq
                4d \expect{\norms{h^{k-1} - \nabla f(x^{k-1})}} 
                \\&+ 4d^2 \left( L^2 \tau^2 
                + \frac{8 \sigma_f^2}{\tau^2} 
                + 2 \sigma_{\nabla}^2 + \frac{2 \Delta^2}{\tau^2} \right)
                \\&+ 2d L^2 \expect{\norms{x^k - x^{k-1}}^2} .
            \end{split}
            \end{equation*}
        
            In the case of two point feedback $\sigma^2_f = 0$.
    \end{lemma}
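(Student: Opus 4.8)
The plan is to follow the proof of Lemma~\ref{lemma:h_vs_nablaf} closely, tracking the extra factors of $d$ introduced by the SEGA correction in line~\ref{line:rho^k} of Algorithm~\ref{alg:FW_stoch}. Using the same relabeling as in that proof, I write $\rho^k = h^{k-1} - d\dotprod{h^{k-1}}{e_i}e_i + d\,\widetilde{\nabla}_i f_\delta(x^k,\xi^+,\xi^-)$, insert $\pm\nabla f(x^{k-1})$ and $\pm\nabla f(x^k)$ in the appropriate coordinates, and obtain the three-piece decomposition
\[
\rho^k - \nabla f(x^k) = P\bigl(h^{k-1}-\nabla f(x^{k-1})\bigr) + d\,e_i e_i^T\bigl(\widetilde{\nabla}f_\delta(x^k,\xi^\pm)-\nabla f(x^k)\bigr) - P\bigl(\nabla f(x^k)-\nabla f(x^{k-1})\bigr),
\]
with $P := I - d\,e_i e_i^T$. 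These are precisely the memory, approximation, and movement pieces of Lemma~\ref{lemma:h_vs_nablaf}, except that the orthogonal projector $I-e_ie_i^T$ is replaced by $P$.

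The decisive difference, and the main obstacle, is that $P$ is \emph{not} orthogonal to $e_ie_i^T$: since $P e_i e_i^T = (1-d)e_ie_i^T \neq 0$, none of the cross terms cancel, and in particular a memory--approximation cross term appears that has no analogue in Lemma~\ref{lemma:h_vs_nablaf} (where the idempotence $e_ie_i^T(I-e_ie_i^T)=0$ killed them). I would therefore expand $\norms{\rho^k-\nabla f(x^k)}^2$ into the three squared norms plus the three scalar products, and bound each scalar product by the Fenchel--Young inequality (Appendix~\ref{axil:fenchel_young}); taking $\mathbb{E}_\xi$ first turns the approximation factor of every cross term into the deterministic bias $\widetilde{\nabla}f_\delta(x^k)-\nabla f(x^k)$, which is again controlled by Lemma~\ref{lemma:tilde_vs_notilda}.

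Next I take the conditional expectation over the uniform index $i$ and over $\xi^+,\xi^-$. The identities $\mathbb{E}_i[e_ie_i^T]=\tfrac1d I$ and $\mathbb{E}_i[P^2]=\mathbb{E}_i[I+d(d-2)e_ie_i^T]=(d-1)I$ convert the memory piece into $(d-1)\expect{\norms{h^{k-1}-\nabla f(x^{k-1})}^2}$, which is the source of the coefficient $4d$. Summing the single-coordinate errors over $i$ reassembles the full difference, so that $\mathbb{E}\bigl[\norms{d\,e_ie_i^T(\widetilde{\nabla}f_\delta(x^k,\xi^\pm)-\nabla f(x^k))}^2\bigr] = d\,\expect{\norms{\widetilde{\nabla}f_\delta(x^k,\xi^\pm_{\overline{1,d}})-\nabla f(x^k)}^2}$, and Lemma~\ref{lemma:tilde_vs_notilda} bounds the latter by $d\bigl(dL^2\tau^2+8d\sigma_f^2/\tau^2+2d\sigma_\nabla^2+2d\Delta^2/\tau^2\bigr)$, i.e. the $d^2$-scaled per-coordinate error behind the $4d^2$ constant. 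The movement piece is handled by $L$-smoothness, $\norms{\nabla f(x^k)-\nabla f(x^{k-1})}^2\le L^2\norms{x^k-x^{k-1}}^2$.

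The last and most delicate step is the constant bookkeeping. Because the factor $d$ inside $P$ magnifies every cross term, the Fenchel--Young parameters must be calibrated so that all three target constants hold at once: choosing parameters of order one on the two movement cross terms and on the memory--approximation cross term makes the memory coefficient collapse to $4(d-1)\le 4d$, the movement coefficient to $2(d-1)\le 2d$, and the approximation coefficient (the standalone $d^2$ from $\norms{V}^2$ plus the deterministic-bias contributions of the cross terms, whose $L^2\tau^2$ and $\Delta^2/\tau^2$ parts add up exactly to $4d^2$) to $4d^2$. Collecting the three bounds yields the claimed inequality. The two-point-feedback reduction $\sigma_f^2=0$ is inherited directly from the corresponding reduction already recorded in Lemma~\ref{lemma:tilde_vs_notilda}.
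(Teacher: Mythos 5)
Your proposal is correct and takes essentially the same route as the paper's proof: the identical three-piece decomposition with $P = I - d\,e_ie_i^T$, the same conditional-expectation identities $\mathbb{E}_i\left[P^2\right] = (d-1)I$ and $\mathbb{E}_i\left[\norms{d\,e_ie_i^T u}^2\right] = d\norms{u}^2$, Fenchel--Young for the non-vanishing cross terms, and Lemma \ref{lemma:tilde_vs_notilda} plus smoothness to assemble the constants $4d$, $4d^2$, $2d$ — which is precisely how the paper justifies its (terser) $\star$ inequality. The only cosmetic difference is that you take $\mathbb{E}_\xi$ inside the cross terms first, so their contribution involves only the deterministic bias; this actually yields a marginally tighter constant that still implies the stated bound.
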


    \begin{proof}
    
        Let's start by writing out a definition of $\rho^{k}$ using line \ref{line:rho^k} of Algorithm \ref{alg:FW_stoch}

        \begin{align*}
            &\expect{\norms{\rho^k - \nabla f(x^k)}^2}
            \\&=
            \expect{\norms{h^{k-1} + d \widetilde{\nabla}_i f_\delta (x^k, \xi^+, \xi^-) - d \dotprod{h^{k-1}}{e_i} e_i - \nabla f(x^k)}^2}
            \\&=
            \mathbb{E}\Bigg[\Bigg\|(I - d e_i e_i^T)\left(h^{k-1} - \nabla f(x^{k-1})\right) 
            \\&\qquad\quad+ 
            de_ie_i^T \left( \widetilde{\nabla} f_\delta(x^k, \xi^+, \xi^-, ... , \xi^+, \xi^-) - \nabla f(x^k)\right)
            \\&\qquad\quad+ 
            (I - d e_i e_i^T)\left(\nabla f(x^{k-1}) - \nabla f(x^k)\right)
            \Bigg\|^2\Bigg]
            \\&\overset{\star}{\leq}
            4 (d-1) \expect{\norms{h^{k-1} - \nabla f(x^{k-1}}} 
            \\&\quad+ 4 d \expect{\norms{\widetilde{\nabla} f_\delta(x^k, \xi^+, \xi^-, ... , \xi^+, \xi^-) - \nabla f(x^k)}^2} 
            \\&\quad+ 2 (d-1) \expect{\norms{\nabla f(x^{k-1}) - \nabla f(x^k)}^2} .
        \end{align*}

        The $\star$ inequality is correct due to similar reasoning as in the proof of Lemma \ref{lemma:h_vs_nablaf} and due to Fenchel-Young inequality \ref{axil:fenchel_young}. Now we can estimate all three summands using Lemmas \ref{lemma:h_vs_nablaf} and \ref{lemma:tilde_vs_notilda} and using Assumption \ref{ass:smooth}:

        \begin{equation*}
        \begin{split}
            \expect{\norms{\rho^k - \nabla f(x^k)}^2}
            &\leq
            4d \expect{\norms{h^{k-1} - \nabla f(x^{k-1}}} 
            \\&+ 4d^2 \left( L^2 \tau^2 
            + \frac{8 \sigma_f^2}{\tau^2} 
            + 2 \sigma_{\nabla}^2 + \frac{2 \Delta^2}{\tau^2} \right)
            \\&+ 2d L^2 \expect{\norms{x^k - x^{k-1}}^2} .
        \end{split}
        \end{equation*}

        This finishes the proof.
    \end{proof}

    \subsection{Result for the \texttt{JAGUAR} approximation (Algorithm \ref{alg:FW})}

    \begin{lemma}
        \label{lemma:g_vs_nabla_f}
            Under Assumption \ref{ass:smooth} the following inequality holds
    
            \begin{equation*}
            \begin{split}
                \expect{\norms{g^k - \nabla f(x^k)}^2}
                &\leq 
                \left(1 - \eta_k\right) \expect{\norms{\nabla f(x^{k-1}) - g^{k-1}}^2}
                \\&+
                \frac{4 L^2}{\eta_k} \expect{\norms{x^k - x^{k-1}}^2}
                \\&\quad+
                \eta_k^2 \expect{\norms{\nabla f(x^k) - \rho^k}^2}
                \\&+
                3 \eta_k \expect{\norms{\widetilde{\nabla} f_\delta(x^k) - \nabla f(x^k)}^2} .
            \end{split}
            \end{equation*}
    \end{lemma}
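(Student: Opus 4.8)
The idea is to unroll the momentum update of line~\ref{line:g^k} of Algorithm~\ref{alg:FW_stoch}, written here as $g^{k}=(1-\eta_k)g^{k-1}+\eta_k\rho^{k}$ to match the indexing of the statement, and to isolate the only piece that carries the fresh randomness of the current step, namely the SEGA estimator $\rho^{k}$. First I would subtract $\nabla f(x^k)$ from both sides and insert the two natural reference points $\nabla f(x^{k-1})$ and $\widetilde{\nabla}f_\delta(x^k)$, producing the splitting
\[
g^{k}-\nabla f(x^k) = M + \eta_k\big(\rho^{k}-\widetilde{\nabla}f_\delta(x^k)\big),
\]
\[
M := (1-\eta_k)\big(g^{k-1}-\nabla f(x^{k-1})\big) + (1-\eta_k)\big(\nabla f(x^{k-1})-\nabla f(x^k)\big) + \eta_k\big(\widetilde{\nabla}f_\delta(x^k)-\nabla f(x^k)\big),
\]
where $M$ is measurable with respect to the history up to step $k$.

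\textbf{Cancelling the cross term.} The key structural fact, built into the SEGA part $\rho^k$ (line~\ref{line:rho^k}), is the conditional unbiasedness $\mathbb{E}_k[\rho^{k}]=\widetilde{\nabla}f_\delta(x^k)$ recorded before Theorem~\ref{theorem:JAGUAR}. Hence the second summand has zero conditional mean, and since $M$ is a ``constant'' under $\mathbb{E}_k$, the tower property forces $\expect{\langle M,\ \rho^{k}-\widetilde{\nabla}f_\delta(x^k)\rangle}=0$. Therefore $\expect{\norms{g^{k}-\nabla f(x^k)}^2}=\expect{\norms{M}^2}+\eta_k^2\expect{\norms{\rho^{k}-\widetilde{\nabla}f_\delta(x^k)}^2}$, and bounding the conditional variance of $\rho^k$ around its mean by its second moment around $\nabla f(x^k)$ (i.e. discarding the nonnegative $\norms{\widetilde{\nabla}f_\delta(x^k)-\nabla f(x^k)}^2$) yields the third term $\eta_k^2\expect{\norms{\rho^{k}-\nabla f(x^k)}^2}$. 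This is precisely where the decisive $\eta_k^{2}$, rather than $\eta_k$, is gained, which is the entire purpose of the momentum step.

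\textbf{Contracting $M$.} For $\expect{\norms{M}^2}$ I would apply the Fenchel--Young inequality (Appendix~\ref{axil:fenchel_young}) to the decomposition of $M$ into its leading term $(1-\eta_k)(g^{k-1}-\nabla f(x^{k-1}))$ and the remainder, choosing the weight of order $\eta_k/(1-\eta_k)$ so that the factor $(1-\eta_k)^2$ collapses to $(1-\eta_k)$; this is the first term of the bound. The leftover $\tfrac{1}{\eta_k}$ weight multiplies the remainder, which the squared-norm-of-a-sum inequality (Appendix~\ref{axil:squared}) splits into a gradient-drift piece $\norms{\nabla f(x^{k-1})-\nabla f(x^k)}^2$ and a bias piece $\norms{\widetilde{\nabla}f_\delta(x^k)-\nabla f(x^k)}^2$, giving the $\tfrac{4L^2}{\eta_k}$ and $3\eta_k$ terms after using $L$-smoothness (Assumption~\ref{ass:smooth}) in the form $\norms{\nabla f(x^{k-1})-\nabla f(x^k)}\le L\norms{x^{k-1}-x^k}$. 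The constants $4$ and $3$ are merely the slack of these routine weightings, so any tighter choice still implies the stated inequality. The $\rho^k$-variance and bias terms are deliberately left intact, to be estimated afterwards by Lemmas~\ref{lemma:rho_vs_nablaf} and~\ref{lemma:tilde_vs_notilda}; this also explains why only Assumption~\ref{ass:smooth} is required here.

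\textbf{Main obstacle.} The delicate point is the measurability bookkeeping underlying the cross-term cancellation: one must check that $M$ depends only on randomness generated strictly before step $k$ (so that $x^{k}$, $\nabla f(x^{k-1})$ and $\widetilde{\nabla}f_\delta(x^k)$ are all measurable), and that the freshly sampled coordinate $i_k$ and stochastic directions $\xi_k^{\pm}$ enter only through $\rho^{k}$. Equally essential is to center $\rho^k$ at $\widetilde{\nabla}f_\delta(x^k)$ rather than at $\nabla f(x^k)$: the mismatch between these two is exactly the full-approximation bias that becomes the $3\eta_k\norms{\widetilde{\nabla}f_\delta(x^k)-\nabla f(x^k)}^2$ term, and centering at the wrong point would destroy both the unbiasedness and the $\eta_k^2$ scaling.
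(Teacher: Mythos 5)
Your proof is correct, and it is organized in a genuinely different way from the paper's. The paper expands $\nabla f(x^k)-g^k$ into three pieces all centered at $\nabla f(x^k)$ and then bounds all six resulting squares and cross products one by one: the two cross terms involving $\rho^k$ are handled by first passing to $\mathbb{E}_{k-1}\left[\nabla f(x^k)-\rho^k\right]=\nabla f(x^k)-\widetilde{\nabla}f_\delta(x^k)$ (the same unbiasedness you invoke) and then applying Fenchel--Young, and the remaining cross term by another Fenchel--Young step with weight of order $(1-\eta_k)^2/\eta_k$. You instead center the stochastic part at its conditional mean $\widetilde{\nabla}f_\delta(x^k)$, so the single cross term with the $\mathcal{F}_{k-1}$-measurable remainder $M$ vanishes exactly by the tower property, and you return to the stated third term via the bias--variance (Pythagorean) inequality $\mathbb{E}_{k-1}\left[\norms{\rho^k-\widetilde{\nabla}f_\delta(x^k)}^2\right]\leq\mathbb{E}_{k-1}\left[\norms{\rho^k-\nabla f(x^k)}^2\right]$; the deterministic part $M$ is then contracted with one Young step plus the squared-norm-of-a-sum bound. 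Both arguments rest on the same three ingredients (the momentum recursion, $\mathbb{E}_{k}[\rho^k]=\widetilde{\nabla}f_\delta(x^k)$, and $L$-smoothness), but your organization buys two things. First, tighter constants: $2L^2/\eta_k$ and $2\eta_k$ in place of $4L^2/\eta_k$ and $3\eta_k$, which of course still implies the stated bound, as you note. Second, a clean $(1-\eta_k)$ factor on the memory term: the paper's own accounting, as written, puts $(1-\eta_k)^2+\tfrac{\eta_k}{2}+\tfrac{\eta_k}{2}=1-\eta_k+\eta_k^2$ in front of $\expect{\norms{\nabla f(x^{k-1})-g^{k-1}}^2}$, which slightly exceeds the claimed $1-\eta_k$ (harmless downstream, since only the order of the contraction matters in the recursion lemma, but it is a genuine slack); your exact orthogonality sidesteps this bookkeeping issue entirely. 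Your caveats about measurability and about centering at $\widetilde{\nabla}f_\delta(x^k)$ rather than $\nabla f(x^k)$ are exactly the right ones, and both are satisfied in the lemma's index convention, where $x^k$, $g^{k-1}$ and the deterministic function $\widetilde{\nabla}f_\delta(\cdot)$ (defined through the averaged noise $\delta(x)=\E\left[\delta(x,\xi)\right]$) are fixed before the fresh sample $(i_k,\xi_k^{\pm})$ enters through $\rho^k$.
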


    \begin{proof}
        We start by writing out a definition of $g^k$ using line \ref{line:g^k} of Algorithm \ref{alg:FW_stoch}

        \begin{align*}
            &\expect{\norms{g^k - \nabla f(x^k)}^2}
            \\&=
            \expect{\norms{\nabla f(x^{k-1}) - g^{k-1} + \nabla f(x^k) - \nabla f(x^{k-1}) - \left( g^k - g^{k-1} \right)}^2}
            \notag\\
            &=
            \expect{\norms{\nabla f(x^{k-1}) - g^{k-1} + \nabla f(x^k) - \nabla f(x^{k-1}) - \eta_k\left( \rho^k - g^{k-1} \right)}^2}
            \notag\\
            &=
            \mathbb{E}\Big[\Big\|(1-\eta_k)(\nabla f(x^{k-1}) - g^{k-1}) 
            \\& \qquad+ (1 - \eta_k) (\nabla f(x^k) - \nabla f(x^{k-1})) 
            + \eta_k \left( \nabla f(x^k) - \rho^k \right)\Big\|^2\Big]
            \notag\\
            &=
            (1-\eta_k)^2 \underbrace{\expect{\norms{\nabla f(x^{k-1}) - g^{k-1}}^2}}_{\circledOne} 
            \\&\quad+
            (1-\eta_k)^2 \underbrace{\expect{\norms{\nabla f(x^k) - \nabla f(x^{k-1})}^2}}_{\circledTwo} 
            \notag\\
            &\quad+
            \eta_k^2 \underbrace{\expect{\norms{\nabla f(x^k) - \rho^k}^2}}_{\circledThree} 
            \\&\quad+
            2(1 - \eta_k)^2\underbrace{\expect{\dotprod{\nabla f(x^{k-1}) - g^{k-1}}{\nabla f(x^k) - \nabla f(x^{k-1})}}}_{\circledFour}
            \notag\\
            &\quad+
            2 \eta_k(1 - \eta_k)\underbrace{\expect{\dotprod{\nabla f(x^{k-1}) - g^{k-1}}{\nabla f(x^k) - \rho^k}}}_{\circledFive}
            \notag\\
            &\quad+
            2 \eta_k(1 - \eta_k)\underbrace{\expect{\dotprod{\nabla f(x^k) - \nabla f(x^{k-1})}{\nabla f(x^k) - \rho^k}}}_{\circledSix} .
        \end{align*}

        Consider $\circledFive$. Since we generate $\xi^+$ and $\xi^-$ independently, we obtain 

        \begin{equation*}
            \circledFive = \expect{\dotprod{\nabla f(x^{k-1}) - g^{k-1}}{\mathbb{E}_{k - 1} \left[\nabla f(x^k) - \rho^k \right]}},
        \end{equation*}

        where $\mathbb{E}_{k-1}[\cdot]$ is the conditional expectation with fixed randomness of all steps up to $k-1$. Using fact that 

        \begin{equation*}
        \begin{split}
            \mathbb{E}_{k - 1} \left[\nabla f(x^k) - \rho^k \right] 
            &= \nabla f(x^k) - \widetilde{\nabla} f(x^k) 
            \\&= \nabla f(x^k) - \sum\limits_{i = 1}^d \frac{f(x + \tau e_i) - f(x - \tau e_i)}{2 \tau} e_i.
        \end{split}
        \end{equation*}

        Fact that for $\widetilde{\nabla} f_\delta(x)$ Lemma \ref{lemma:tilde_vs_notilda} holds true with $\sigma_f^2 = \sigma_\nabla^2 = 0$ and using Cauchy Schwarz inequality \ref{axil:cauchy_schwarz} with $\beta = 2 (1 - \eta_k)$ we can assume

        \begin{equation}
        \label{eq:tmp_th3_5}
        \begin{split}
            \circledFive &\leq \frac{1}{4(1 - \eta_k)} \expect{\norms{\nabla f(x^{k-1}) - g^{k-1}}^2} 
            \\&+ (1 - \eta_k) \expect{\norms{\widetilde{\nabla} f_\delta(x^k) - \nabla f(x^k)}^2}.
        \end{split}
        \end{equation}

        Similarly it can be shown that 

        \begin{equation}
        \label{eq:tmp_th3_6}
        \begin{split}
            \circledSix &\leq \frac{1}{2 (1 - \eta_k) \eta_k^2} \expect{\norms{\nabla f(x^k) - \nabla f(x^{k-1})}^2} 
            \\&+
            \frac{(1 - \eta_k) \eta_k^2}{2} \expect{\norms{\widetilde{\nabla} f_\delta(x^k) - \nabla f(x^k)}^2}.
        \end{split}
        \end{equation}

        Using Assumption \ref{ass:smooth} we can obtain that 

        \begin{equation}
        \label{eq:tmp_th3_3}
            \circledTwo \leq L^2 \expect{\norms{x^k - x^{k-1}}^2}.
        \end{equation}

        Consider $\circledFour$. Using auchy Schwarz inequality \ref{axil:cauchy_schwarz} with $\beta = 2 \frac{(1 - \eta_k)^2}{\eta_k}$ we can assume

        \begin{equation}
        \label{eq:tmp_th3_4}
        \begin{split}
            \circledFour &\leq \frac{\eta_k}{4 (1 - \eta_k)^2} \expect{\norms{g^k - \nabla f(x^k)}^2}
            \\&+
            \frac{(1 - \eta_k)^2}{\eta_k} L^2 \expect{\norms{x^k - x^{k-1}}^2} .
        \end{split}
        \end{equation}

        Putting \eqref{eq:tmp_th3_5}, \eqref{eq:tmp_th3_6}, \eqref{eq:tmp_th3_3} and \eqref{eq:tmp_th3_4} all together and using the fact that $(1 -\eta_k)^2 \leq 1 - \eta_k$, we obtain

        \begin{equation*}
        \begin{split}
            \expect{\norms{g^k - \nabla f(x^k)}^2}
            &\leq 
            \left(1 - \eta_k\right) \expect{\norms{\nabla f(x^{k-1}) - g^{k-1}}^2}
            \\&+
            \frac{4 L^2}{\eta_k} \expect{\norms{x^k - x^{k-1}}^2}
            \\&\quad+
            \eta_k^2 \expect{\norms{\nabla f(x^k) - \rho^k}^2}
            \\&+
            3 \eta_k \expect{\norms{\widetilde{\nabla} f_\delta(x^k) - \nabla f(x^k)}^2} .
        \end{split}
        \end{equation*}

        This finishes the proof.
    \end{proof}

\section{Proof of converge rate of \texttt{FW via JAGUAR}} \label{appendix:FW}
    \subsection{Results for the deterministic case (Algorithm \ref{alg:FW})}
    \label{appendix:subsec_jaguar_deter}
    \begin{proof}[Proof of Theorem \ref{theorem:JAGUAR_nonstoch}]
            We start by writing out result from Lemma \ref{lemma:h_vs_nablaf} with $\sigma_f^2 = \sigma_\nabla^2 = 0$ and setting up $\gamma_k = \frac{4}{k + k_0}$:

            \begin{equation*}
            \begin{split}
                \expect{\norms{h^{k+1} - \nabla f(x^{k+1})}^2}
                &\leq
                \left(1 - \frac{1}{2 d}\right) \expect{\norms{h^{k} - \nabla f(x^{k})}^2}
                \\&\quad+ \frac{32 d L^2 D^2}{(k + k_0)^2} 
                + L^2 \tau^2 
                + \frac{2 \Delta^2}{\tau^2} .
            \end{split}
            \end{equation*}
    
            Now we use Lemma \ref{lem:recursion} with $\alpha_0 = 0, \beta_0 = 1/2d$;
            $\alpha_1 = 2, \beta_1 = 32d L^2 D^2$;
            $\alpha_2 = 0, \beta_2 = L^2 \tau^2 + \frac{2 \Delta^2}{\tau^2}$ and $i^* = 1$.
    
            \begin{equation*}
            \begin{split}
                \expect{\norms{h^{k} - \nabla f(x^{k})}^2} = 
                \mathcal{O} \Bigg( &d L^2 \tau^2 
                + \frac{d \Delta^2}{\tau^2}
                \\&+\frac{\max\{d^2 L^2 D^2, \norms{h^0 - \nabla f(x^0)}^2 \cdot k_0^2\}}{(k + k_0)^2} \Bigg),
            \end{split}
            \end{equation*}
    
            where $k_0 = (4d \cdot 2)^1 = 8d$. If $h_0 = \widetilde{\nabla} f_\delta(x^0)$ we can obtain
    
            \begin{equation*}
                \expect{\norms{h^{k} - \nabla f(x^{k})}^2} = 
                \mathcal{O} \left( d L^2 \tau^2 
                + \frac{d \Delta^2}{\tau^2}
                +\frac{d^2 L^2 D^2}{(k + 8d)^2} \right) .
            \end{equation*}

            This finishes the proof.
        \end{proof}
        
    \begin{proof}[Proof of Theorem \ref{theorem:FW_nonstoch}]
        We start by writing our the result of Lemma 2 from \cite{mokhtari2020stochastic}. Under Assumptions \ref{ass:smooth}, \ref{ass:conv} the following inequality holds

        \begin{equation*}
        \begin{split}
            \expect{f(x^{k+1}) - f(x^*)} & \leq (1 - \gamma_k) \expect{f(x^{k}) - f(x^*)} \\&+ \gamma_k D \expect{\norms{h^k - \nabla f(x^k)}} + \frac{L D^2 \gamma_k^2}{2} .
        \end{split}
        \end{equation*}

        We can evaluate $\expect{\norms{h^k - \nabla f(x^k)}}$ using Jensen’s inequality:

        \begin{equation*}
            \expect{\norms{h^k - \nabla f(x^k)}} \leq \sqrt{\expect{\norms{h^k - \nabla f(x^k)}^2}} .
        \end{equation*}

        Using result from Theorem \ref{theorem:JAGUAR_nonstoch} we can obtain

        \begin{equation*}
            \expect{\norms{h^k - \nabla f(x^k)}} 
            = 
            \mathcal{O} \left( \frac{d L D}{k + 8d} +
            \sqrt{d} L \tau + \frac{\sqrt{d} \Delta}{\tau} \right) .
        \end{equation*}

        Using Lemma \ref{lem:recursion} with $\alpha_0 = 1, \beta_0 = 4, k_0 = 8d$;
        $\alpha_1 = 2, \beta_1 = 8 L D^2 + d L D^2$;
        $\alpha_2 = 1, \beta_2 = \sqrt{d} L \tau D + \frac{\sqrt{d} \Delta D}{\tau}$ and $i^* = 1$, we get:

        \begin{equation*}
        \begin{split}
            \expect{f(x^{k}) - f(x^*)} 
            =
            \mathcal{O} \Bigg( &\frac{d \max\{L D^2, f(x^0) - f(x^*)\}}{k + 8d}
            \\&+ \sqrt{d} L D \tau + \frac{\sqrt{d} \Delta D}{\tau}\Bigg).
        \end{split}
        \end{equation*}

        In Lemma \ref{lem:recursion} if $\alpha_0 = 1$ we need to take $\beta_0 \geq 2 \cdot 1 = 2$, we take $\beta_0 = 4$. This finishes the proof.
    \end{proof}

    \begin{proof}[Proof of Corollary \ref{cor:FW_nonstoch}]
        We aim to achieve precision $\varepsilon$, i.e.

        \begin{equation*}
        \begin{split}
            \expect{f(x^{N}) - f(x^*)} 
            &=
            \mathcal{O} \Bigg( \frac{d \max\{L D^2, f(x^0) - f(x^*)\}}{N + 8d}
            \\&\qquad\quad+ \sqrt{d} L D \tau + \frac{\sqrt{d} \Delta D}{\tau}\Bigg) 
            \leq \varepsilon.
        \end{split}
        \end{equation*}

        Therefore we need to take

        \begin{equation*}
            \begin{split}
                &N = \mathcal{O} \left( \frac{d \max\{L D^2, f(x^0) - f(x^*)\}}{\varepsilon} \right),
                \\&\tau = \mathcal{O} \left(\frac{\varepsilon}{\sqrt{d} L D} \right), \quad
                \Delta = \mathcal{O} \left( \frac{\varepsilon \tau}{\sqrt{d} D}\right) = \mathcal{O} \left( \frac{\varepsilon^2}{d L D^2}\right) .
            \end{split}
            \end{equation*}

        This finishes the proof.
    \end{proof}
    
    \subsection{Results for the stochastic case (Algorithm \ref{alg:FW_stoch})}
    \label{appendix:subsec_jaguar_sthoch}

    \begin{proof}[Proof of Theorem \ref{theorem:JAGUAR}]
        Consider $\expect{\norms{h^{k} - \nabla f(x^{k})}^2}$. We start by writing out result from Lemma \ref{lemma:h_vs_nablaf} and setting up $\gamma_k = \frac{4}{k + k_0}$:

        \begin{equation*}
        \begin{split}
            \expect{\norms{h^{k+1} - \nabla f(x^{k+1})}^2}
            &\leq
            \left(1 - \frac{1}{2 d}\right) \expect{\norms{h^{k} - \nabla f(x^{k})}^2}
            + \frac{32 d L^2 D^2}{(k + k_0)^2} 
            \\&\quad+ L^2 \tau^2 
            + \frac{8 \sigma_f^2}{\tau^2} 
            + 2 \sigma_{\nabla}^2 + \frac{2 \Delta^2}{\tau^2} .
        \end{split}
        \end{equation*}

        Now we use Lemma \ref{lem:recursion} with $\alpha_0 = 0, \beta_0 = 1/2d$;
        $\alpha_1 = 2, \beta_1 = 32d L^2 D^2$;
        $\alpha_2 = 0, \beta_2 = L^2 \tau^2 + \frac{8 \sigma_f^2}{\tau^2} + 2 \sigma_{\nabla}^2 + \frac{2 \Delta^2}{\tau^2}$ and $i^* = 1$.

        \begin{equation*}
        \begin{split}
            &\expect{\norms{h^{k} - \nabla f(x^{k})}^2} 
            \\&= 
            \mathcal{O} \left( d L^2 \tau^2 
            + \frac{d \sigma_f^2}{\tau^2} 
            + d \sigma_{\nabla}^2 + \frac{d \Delta^2}{\tau^2}
            +\frac{\max\{d^2 L^2 D^2, \norms{h^0 - \nabla f(x^0)}^2 \cdot k_0^2\}}{(k + k_0)^2} \right),
        \end{split}
        \end{equation*}

        where $k_0 = (4d \cdot 2)^1 = 8d$. For simplicity of calculations further we take $k_0 = 8 d^{3/2} > 8d$. If $h^0 = \widetilde{\nabla} f_\delta(x^0, \xi^+_1, \xi^-_1, ..., \xi^+_d, \xi^-_d)$ we can obtain

        \begin{equation*}
        \begin{split}
            &\expect{\norms{h^{k} - \nabla f(x^{k})}^2} 
            = 
            \mathcal{O} \left( d L^2 \tau^2 
            + \frac{d \sigma_f^2}{\tau^2} 
            + d \sigma_{\nabla}^2 + \frac{d \Delta^2}{\tau^2}
            +\frac{d^2 L^2 D^2}{(k + 8d^{3/2})^2} \right) .
        \end{split}
        \end{equation*}

        Consider $\expect{\norms{\rho^{k} - \nabla f(x^{k})}^2}$. Using Lemmas \ref{lemma:rho_vs_nablaf} and \ref{lemma:tilde_vs_notilda} we obtain

        \begin{equation*}
        \begin{split}
            &\expect{\norms{\rho^{k} - \nabla f(x^{k})}^2} 
            \\&= 
            \mathcal{O} \Bigg( d^2 L^2 \tau^2 
            + \frac{d^2 \sigma_f^2}{\tau^2} 
            + d^2 \sigma_{\nabla}^2 
            \\&\qquad\quad+ \frac{d^2 \Delta^2}{\tau^2}
            +\frac{d^3 \max\{L^2 D^2, d \norms{h^0 - \nabla f(x^0)}^2\}}{(k + 8d^{3/2})^2} \Bigg) .
        \end{split}
        \end{equation*}

        Consider $\expect{\norms{g^k - \nabla f(x^k)}^2}$. We write out result from Lemma \ref{lemma:g_vs_nabla_f} and setting up $\eta_k = \frac{4}{(k + 8d^{3/2})^{2/3}}$:

        \begin{equation*}
        \begin{split}
            &\expect{\norms{g^k - \nabla f(x^k)}^2}
            \leq 
            \left(1 - \eta_k\right) \expect{\norms{\nabla f(x^{k-1}) - g^{k-1}}^2}
            +
            \frac{4 L^2 D^2}{(k + 8d^{3/2})^{4/3}}
            \\&+
            \frac{4}{(k + 8d^{3/2})^{4/3}} \mathcal{O} \Bigg(d^2 L^2 \tau^2 
            + \frac{d^2 (\sigma_f^2 + \Delta^2)}{\tau^2} 
            \\&+ d^2 \sigma_{\nabla}^2 
            +\frac{d^3 \max\{L^2 D^2, d \norms{h^0 - \nabla f(x^0)}^2\}}{(k + 8d^{3/2})^2} \Bigg)
            \\&+
            \frac{12}{(k + 8d^{3/2})^{2/3}} \left( d L^2 \tau^2  
            + \frac{d \Delta^2}{\tau^2}\right) .
        \end{split}
        \end{equation*}

        Using Lemma \ref{lem:recursion} with 
        \begin{equation*}
        \begin{split}
            &\alpha_0 = 2/3, \beta_0 = 4 ; 
            \\&\alpha_1 = 4/3, \beta_1 = 4 L^2 D^2; 
            \\&\alpha_2 = 4/3, \beta_2 = 4 d^2 L^2 \tau^2 + \frac{4 d^2 \sigma_f^2}{\tau^2} + 4 d^2 \sigma_{\nabla}^2 + \frac{4 d^2 \Delta^2}{\tau^2};
            \\&\alpha_3 = 10/3, \beta_3 = 4 d^3 \max\{L^2 D^2, d \norms{h^0 - \nabla f(x^0)}^2\};
            \\&\alpha_4 = 2/3, \beta_4 = d L^2 \tau^2 + \frac{d \Delta^2}{\tau^2}
        \end{split}
        \end{equation*}
        and $i^* = 2$ we get:

        \begin{equation}
        \label{eq:tmp_last_1}
        \begin{split}
            &\expect{\norms{g^k - \nabla f(x^k)}^2} 
            = 
            \mathcal{O} \Bigg(\frac{L^2 D^2}{(k + 8d^{3/2})^{2/3}} 
            \\&+ \frac{\max\{d^2 L^2 \tau^2 + d^2 \sigma_f^2/ \tau^2 + d^2 \sigma_{\nabla}^2 + d^2 \Delta^2 / \tau^2, d \norms{g^0 - \nabla f(x^0)}^2\}}{(k + 8d^{3/2})^{2/3}} 
            \\&\qquad \quad+
            \frac{d^3 \max\{L^2 D^2, d \norms{h^0 - \nabla f(x^0)}^2\}}{(k + 8d^{3/2})^{8/3}} + d L^2 \tau^2 + \frac{d \Delta^2}{\tau^2} \Bigg) .
        \end{split}
        \end{equation}

        Since 
        
        $$\frac{d^3 L^2 D^2}{(k + 8d^{3/2})^{8/3}} \leq \frac{L^2 D^2}{(k + 8d^{3/2})^{2/3}} ~~\text{ and }~~
        \frac{d^2 L^2 \tau^2 + d^2 \Delta^2 / \tau^2}{(k + 8d^{3/2})^{2/3}} \leq d L^2 \tau^2 + \frac{d \Delta^2}{\tau^2},$$

        we can simplify \eqref{eq:tmp_last_1}:

        \begin{equation*}
        \begin{split}
            \expect{\norms{g^k - \nabla f(x^k)}^2} 
            &=
            \mathcal{O} \Bigg(\frac{L^2 D^2 + \max\{d^2 \sigma_f^2/ \tau^2 + d^2 \sigma_{\nabla}^2, d \norms{g^0 - \nabla f(x^0)}^2\}}{(k + 8d^{3/2})^{2/3}}
            \\&\qquad\quad+
            \frac{d^4 \norms{h^0 - \nabla f(x^0)}^2}{(k + 8d^{3/2})^{8/3}}
            +
            d L^2 \tau^2 + \frac{d \Delta^2}{\tau^2} \Bigg) .
        \end{split}
        \end{equation*}

        If $h^0 = g^0 = \widetilde{\nabla} f_\delta(x^0, \xi^+_1, \xi^-_1, ..., \xi^+_d, \xi^-_d)$ we can obtain

        \begin{equation*}
            \expect{\norms{g^k - \nabla f(x^k)}^2} 
            =
            \mathcal{O} \left(\frac{L^2 D^2 + d^2 \sigma_f^2/ \tau^2 + d^2 \sigma_{\nabla}^2}{(k + 8d^{3/2})^{2/3}} 
            +
            d L^2 \tau^2 + \frac{d \Delta^2}{\tau^2} \right) .
        \end{equation*}

        This finishes the proof.
    \end{proof}
    
    \begin{proof}[Proof of Theorem \ref{theorem:FW}]
        Again we write out result of Lemma 2 from \cite{mokhtari2020stochastic}:

        \begin{equation}
        \label{eq:tmp_mokharti}
        \begin{split}
            \expect{f(x^{k+1}) - f(x^*)} &\leq (1 - \gamma_k) \expect{f(x^{k}) - f(x^*)} 
            \\&+ \gamma_k D \expect{\norms{g^k - \nabla f(x^k)}} + \frac{L D^2 \gamma_k^2}{2} .
        \end{split}
        \end{equation}

        We can evaluate $\expect{\norms{g^k - \nabla f(x^k)}}$ using Jensen’s inequality:

        \begin{equation*}
            \expect{\norms{g^k - \nabla f(x^k)}} \leq \sqrt{\expect{\norms{g^k - \nabla f(x^k)}^2}} .
        \end{equation*}

        Using result from Theorem \ref{theorem:JAGUAR} we can obtain

        \begin{equation*}
            \expect{\norms{g^k - \nabla f(x^k)}} 
            = 
            \mathcal{O} \left(\frac{L D + d \sigma_f/ \tau + d\sigma_{\nabla}}{(k + 8d^{3/2})^{1/3}} 
            +
            \sqrt{d} L \tau + \frac{\sqrt{d} \Delta}{\tau} \right) .
        \end{equation*}

        Set up $\gamma_k = \frac{4}{k + 8d^{3/2}}$ into \eqref{eq:tmp_mokharti}:

        \begin{equation*}
        \begin{split}
            &\expect{f(x^{k+1}) - f(x^*)} 
            \leq 
            (1 - \gamma_k) \expect{f(x^{k}) - f(x^*)} 
            + \frac{8 L D^2}{(k + 8d^{3/2})^2}
            \\&+
            \frac{4D}{k + 8d^{3/2}}  \mathcal{O} \left(\frac{L D + d \sigma_f/ \tau + d\sigma_{\nabla}}{(k + 8d^{3/2})^{1/3}} 
            + \sqrt{d} L \tau + \frac{\sqrt{d} \Delta}{\tau} \right) .
        \end{split}
        \end{equation*}

        Using Lemma \ref{lem:recursion} with $\alpha_0 = 1, \beta_0 = 4, k_0 = 8d^{3/2}$;
        $\alpha_1 = 2, \beta_1 = 8 L D^2$;
        $\alpha_2 = 4/3; \beta_2 = L D + d \sigma_f/ \tau + d\sigma_{\nabla}$;
        $\alpha_3 = 1, \beta_3 = \sqrt{d} L \tau + \frac{\sqrt{d} \Delta}{\tau}$ and $i^* = 2$, we get:

        \begin{equation*}
        \begin{split}
            &\expect{f(x^{k}) - f(x^*)} 
            \\&=
            \mathcal{O} \Bigg( \frac{L D^2}{k + 8d^{3/2}} + \frac{\max\{L D^2 + d \sigma_f D/ \tau + d\sigma_{\nabla} D, \sqrt{d} (f(x^0) - f(x^*))\}}{(k + 8d^{3/2})^{1/3}} 
            \\&\qquad\quad+ \sqrt{d} L D \tau + \frac{\sqrt{d} \Delta D}{\tau}\Bigg).
        \end{split}
        \end{equation*}

        In Lemma \ref{lem:recursion} if $\alpha_0 = 1$ we need to take $\beta_0 \geq 2 \cdot 1 = 2$, we take $\beta_0 = 4$. Since $k + 8d^{3/2} > (k + 8d^{3/2})^{1/3}$, we can obtain:

        \begin{equation*}
        \begin{split}
            &\expect{f(x^{k}) - f(x^*)} 
            \\&=
            \mathcal{O} \Bigg( \frac{L D^2 + d \sigma_f D/ \tau + d\sigma_{\nabla} D + \sqrt{d} (f(x^0) - f(x^*))}{(k + 8d^{3/2})^{1/3}} 
            \\&\qquad\quad+ \sqrt{d} L D \tau + \frac{\sqrt{d} \Delta D}{\tau}\Bigg) .
        \end{split}
        \end{equation*}
        
        This finishes the proof.
    \end{proof}

    \begin{proof}[Proof of Corollary \ref{cor:FW}]
        We aim to achieve precision $\varepsilon$, i.e.

        \begin{equation*}
        \begin{split}
            &\expect{f(x^{k}) - f(x^*)} 
            \\&=
            \mathcal{O} \Bigg( \frac{L D^2 + d \sigma_f D/ \tau + d\sigma_{\nabla} D + \sqrt{d} (f(x^0) - f(x^*))}{(k + 8d^{3/2})^{1/3}} 
            \\&\qquad\quad+ \sqrt{d} L D \tau + \frac{\sqrt{d} \Delta D}{\tau}\Bigg)
            \leq \varepsilon.
        \end{split}
        \end{equation*}

        Therefore we need to take

        \begin{equation*}
                N = \mathcal{O} \left( \max\left\{ \left[ \frac{L D^2 + d\sigma_{\nabla} D + \sqrt{d} (f(x^0) - f(x^*))}{\varepsilon}\right]^3 , \frac{d^{9/2} \sigma_f^3 L^3D^6}{\varepsilon^6} \right\}\right),
            \end{equation*}
        \begin{equation*}
            \tau = \mathcal{O} \left(\frac{\varepsilon}{\sqrt{d} L D} \right), \quad
            \Delta = \mathcal{O} \left( \frac{\varepsilon^2}{d L D^2}\right).
        \end{equation*}
        
        This finishes the proof.
    \end{proof}

\section{Proof of converge rate of \texttt{GD via JAGUAR} (Algorithm \ref{alg:GD})} \label{appendix:GD}

    \subsection{Results for the non-convex case}

    \begin{proof}[Proof of Theorem \ref{theorem:GD}]
        We start by writing our the result of Lemma 2 from \cite{li2021page}. Under Assumption \ref{ass:smooth} the following inequality holds

        \begin{equation}
        \label{eq:tmp_PL_1}
        \begin{split}
            \expect{f(x^{k+1}) - f(x^*)} 
            &\leq 
            \expect{f(x^k) - f(x^*)} 
            - \frac{\gamma_k}{2} \expect{\norms{\nabla f(x^k)}^2} 
            \\&\quad- 
            \left( \frac{1}{2 \gamma_k} - \frac{L}{2} \right) \expect{\norms{x^{k+1} - x^k}^2}
            + \frac{\gamma_k}{2} \expect{\norms{h^k - \nabla f(x^k)}^2} .
        \end{split}
        \end{equation}

        Using results from Lemma \ref{lemma:h_vs_nablaf_nonstoch}:

        \begin{equation}
            \label{eq:tmp_PL_2}
            \begin{split}
            \expect{\norms{h^{k+1} - \nabla f(x^{k+1})}^2}
                    &\leq
                    \left(1 - \frac{1}{2 d}\right) \expect{\norms{h^{k} - \nabla f(x^{k})}^2}
                    \\&\quad+ 2d L^2 \expect{\norms{x^{k+1} - x^{k}}^2}
                    + L^2 \tau^2 
                    + \frac{2 \Delta^2}{\tau^2} .
            \end{split}
        \end{equation}

        Let us introduce notations for shortness:

        $$r_k := \expect{f(x^k) - f(x^*)}, ~~ \psi_k := \expect{\norms{h^{k} - \nabla f(x^{k})}^2},$$
        $$q_k := \expect{\norms{\nabla f(x^k)}^2}, ~~ p_k := \expect{\norms{x^{k+1} - x^{k}}^2}, ~~ c := L^2 \tau^2 + \frac{2 \Delta^2}{\tau^2} .
        $$

        Now equations \eqref{eq:tmp_PL_1} and \eqref{eq:tmp_PL_2} take form

        \begin{equation*}
        \begin{split}
            &r_{k+1} \leq 
            r_k - \frac{\gamma_k}{2} q_k - \left( \frac{1}{2 \gamma_k} - \frac{L}{2} \right) p_k + \frac{\gamma_k}{2} \psi_k,
            \\& \psi_{k+1} \leq \left(1 - \frac{1}{2 d}\right) \psi_k + 2 d L^2 p_k + c .
        \end{split}
        \end{equation*}

        Summarizing these two inequalities by multiplying the second by $\gamma_k d $ and introducing the notation $\Phi_k := r_k + \gamma_k d \psi_k$, we obtain:

        \begin{equation}
        \label{eq:tmp_PL_3}
            \Phi_{k+1} \leq \Phi_k - \left( \frac{1}{2 \gamma_k} - \frac{L}{2} - 2 \gamma_k d^2 L^2 \right) p_k - \frac{\gamma_k}{2} q_k + \gamma_k d c .
        \end{equation}

        If we consider $\gamma_k \equiv \gamma_0 = \frac{1}{4 d L}$, when

        \begin{equation*}
            \frac{1}{2 \gamma_k} - \frac{L}{2} - 2 \gamma_k d^2 L^2 = 2 d L - \frac{L}{2} - \frac{d L}{2} > 0.
        \end{equation*}

        Therefore equation \eqref{eq:tmp_PL_3} takes form

        \begin{equation}
        \label{eq:tmp_PL_4}
            \Phi_{k+1} \leq \Phi_k - \frac{\gamma_0}{2} q_k + \gamma_0 d c .
        \end{equation}

        Summing \eqref{eq:tmp_PL_4} from $k = 0$ to $k = N$ we obtain 

        \begin{equation*}
            0 \leq \Phi_{N+1} \leq \Phi_0 - \frac{\gamma_0}{2} \sum\limits_{k = 0}^N q_k + \gamma_0 d c (N + 1) .
        \end{equation*}

        Therefore

        \begin{equation*}
            \sum\limits_{k = 0}^N \expect{\norms{\nabla f(x^k)}^2} \leq \frac{2 \Phi_0}{\gamma_0} + dc (N + 1) .
        \end{equation*}

        Since we choose $\widehat{x}_N$ chosen uniformly from $\{x^k \}_{k=0}^N$ we can obtain

        \begin{equation*}
            \expect{\norms{\nabla f(\widehat{x}_N)}^2} \leq \frac{8 L d \Phi_0}{N + 1} + d L^2 \tau^2 + \frac{2 d \Delta^2}{\tau^2} .
        \end{equation*}

        This finishes the proof.
    \end{proof}

    \begin{proof}[Proof of Corollary \ref{cor:GD}]
        We aim to achieve precision $\varepsilon$, i.e.

        \begin{equation*}
            \expect{\norms{\nabla f(\widehat{x}_N)}^2} \leq \frac{8 L d \Phi_0}{N + 1} + d L^2 \tau^2 + \frac{2 d \Delta^2}{\tau^2} \leq \varepsilon^2.
        \end{equation*}

        Therefore we need to take

        \begin{equation*}
            N = \mathcal{O} \left(\frac{Ld \Phi_0}{\varepsilon^2} \right),\quad
            \tau = \mathcal{O} \left( \frac{\varepsilon}{\sqrt{d} L} \right), \quad
            \Delta = \mathcal{O} \left( \frac{\varepsilon \tau}{\sqrt{d}} \right) = \mathcal{O} \left( \frac{\varepsilon^2}{d L} \right).
        \end{equation*}

        This finishes the proof.
    \end{proof}

    \subsection{Results for the PL-condition case}
    \begin{proof}[Proof of Theorem \ref{theorem:GD_PL}]
        We start by writing our the result of Lemma 5 from \cite{li2021page}. Under Assumption \ref{ass:smooth} the following inequality holds

        \begin{equation}
        \label{eq:tmp_PL_5}
        \begin{split}
            \expect{f(x^{k+1}) - f(x^*)} 
            &\leq 
            (1 - \mu \gamma_k) \expect{f(x^k) - f(x^*)} 
            \\&\quad- 
            \left( \frac{1}{2 \gamma_k} - \frac{L}{2} \right) \expect{\norms{x^{k+1} - x^k}^2}
            \\&\quad+ \frac{\gamma_k}{2} \expect{\norms{h^k - \nabla f(x^k)}^2} .
        \end{split}
        \end{equation}

        Using results from Lemma \ref{lemma:h_vs_nablaf_nonstoch} and introducing same notations as in proof of Theorem \ref{theorem:GD} equations \eqref{eq:tmp_PL_5} and \eqref{eq:tmp_PL_2} take form

        \begin{equation*}
        \begin{split}
            &r_{k+1} \leq 
            (1 - \mu \gamma_k) r_k - \left( \frac{1}{2 \gamma_k} - \frac{L}{2} \right) p_k + \frac{\gamma_k}{2} \psi_k,
            \\& \psi_{k+1} \leq \left(1 - \frac{1}{2 d}\right) \psi_k + 2 d L^2 p_k + c .
        \end{split}
        \end{equation*}

        Summarizing these two inequalities by multiplying the second by $2 \gamma_k d $ and introducing the notation $\Phi_k := r_k + 2 \gamma_k d \psi_k$, we obtain:

        \begin{equation}
        \label{eq:tmp_PL_6}
        \begin{split}
            \Phi_{k+1} &\leq (1 - \mu \gamma_k) r_k + \left(1 - \frac{1}{4d}\right) 2 \gamma_k d \psi_k 
            \\&- \left( \frac{1}{2 \gamma_k} - \frac{L}{2} - 4 \gamma_k d^2 L^2 \right) p_k + 2 \gamma_k d c .
        \end{split}
        \end{equation}

        If we consider $\gamma_k \equiv \gamma_0 = \frac{1}{4 d L}$, when $1 - \frac{1}{4d} \leq 1 - \mu \gamma_0$, since $L \geq \mu$ and

        \begin{equation*}
            \frac{1}{2 \gamma_0} - \frac{L}{2} - 4 \gamma_0 d^2 L^2 = 2 d L - \frac{L}{2} - d L > 0.
        \end{equation*}

        Therefore equation \eqref{eq:tmp_PL_3} takes form

        \begin{equation}
        \label{eq:tmp_PL_7}
            \Phi_{k+1} \leq (1 - \mu \gamma_0) \Phi_k + 2\gamma_0 d c .
        \end{equation}

        Using recursion and inequality \eqref{eq:tmp_PL_7} , we can obtain:

        \begin{equation*}
            \Phi_N = \mathcal{O} \left( \Phi_0 \exp\left[-\frac{\mu N}{4 d L} \right] + \frac{d L^2 \tau^2 + d \Delta^2/\tau^2}{\mu} \right) .
        \end{equation*}

        Using the fact that $\Phi_k \geq \expect{f(x^k) - f(x^*)}$ and $h^0 = \widetilde{\nabla} f_\delta(x^0)$ we get the desired inequality:

        \begin{equation*}
            \expect{f(x^N) - f(x^*)} 
            = \mathcal{O} \left(r_0 \exp\left[-\frac{\mu N}{4 d L} \right] + \frac{d L^2 \tau^2 + d \Delta^2/\tau^2}{\mu} \right),
        \end{equation*}

        where $r_0 = f(x^0) - f(x^*)$. This finishes the proof.
    \end{proof}

    \begin{proof}[Proof of Corollary \ref{cor:GD_PL}]
        We aim to achieve precision $\varepsilon$, i.e.

        \begin{equation*}
            \expect{f(x^N) - f(x^*)} 
            = \mathcal{O} \left(r_0 \exp\left[-\frac{\mu N}{4 d L} \right] + \frac{d L^2 \tau^2 + d \Delta^2/\tau^2}{\mu} \right) \leq \varepsilon.
        \end{equation*}

        Therefore we need to take

        \begin{equation*}
        \begin{split}
            &N = \mathcal{O} \left(\frac{L d}{\mu} \log\left[ \frac{f(x^0) - f(x^*)}{\varepsilon} \right] \right),\quad
            \tau = \mathcal{O} \left( \frac{\sqrt{\varepsilon \mu}}{\sqrt{d} L} \right), \quad
            \\&\Delta = \mathcal{O} \left( \frac{\sqrt{\varepsilon \mu} \tau}{\sqrt{d}} \right) = \mathcal{O} \left( \frac{\varepsilon \mu}{d L} \right).
        \end{split}
        \end{equation*}

        This finishes the proof. 
    \end{proof}



\end{document}